\theoremstyle{definition}
\numberwithin{equation}{section}
\newtheorem{thm}{Theorem}[section]
\newtheorem{dfn}[thm]{Definition}
\newtheorem{exa}[thm]{Example}
\newtheorem{prop}[thm]{Proposition}
\newtheorem{cor}[thm]{Corollary}
\newtheorem{lem}[thm]{Lemma}
\newtheorem{conj}[thm]{Conjecture}
\newtheorem{rem}[thm]{Remark}
\def\dim{\mathop{\mathrm{dim}}\nolimits}
\def\Hom{\mathop{\mathrm{Hom}}\nolimits}
\def\Lam{X_{\ast}(T)}
\def\lam{\check{\lambda}}
\def\la{\lambda}
\def\al{\check{\alpha}}
\def\ep{\varepsilon }
\newcommand{\mf}[1]{{\mathfrak{#1}}}
\newcommand{\mb}[1]{{\mathbf{#1}}}
\newcommand{\bb}[1]{{\mathbb{#1}}}
\newcommand{\mca}[1]{{\mathcal{#1}}}
\newcommand{\mr}[1]{{\mathrm{#1}}}
\title{Affine Springer fibers of type $A$ and combinatorics of diagonal coinvariants}
\author{Tatsuyuki Hikita\footnote{thikita@math.kyoto-u.ac.jp}}
\date{}
\begin{document}

\maketitle

\begin{abstract}
We calculate the Borel-Moore homology of affine Springer fibers of type $A$ associated to some regular semisimple nil elliptic elements. As a result, we obtain bigraded $\mf{S}_{n}$-modules whose bigraded Frobenius series are generalization of the symmetric functions introduced by Haglund, Haiman, Loehr, Remmel, and Ulyanov. 
\end{abstract}

\section{Introduction}\mbox{}

Let $\mu=(\mu_{1},\ldots,\mu_{l})$ be a partition of $n$. Let $x_{\mu}$ be a nilpotent element in the Lie algebra $\mf{sl}_{n}(\bb{C})$ whose Jordan blocks have sizes $\mu_{1},\ldots\mu_{l}$. Let $\mca{B}_{\mu}$ be the variety of Borel subalgebras of $\mf{sl}_{n}(\bb{C})$ containing $x_{\mu}$. This is known as the Springer fiber of type $A$ associated to $x_{\mu}$. The symmetric group $\mf{S}_{n}$ acts on the cohomology ring $$R_{\mu}=H^{\ast}(\mca{B}_{\mu}).$$ This action preserves the grading on $R_{\mu}$ given by cohomological degree. Hence one can consider its Frobenius series $\mca{F}(R_{\mu},z;q)$. The coefficient of $s_{\la}(z)$ in the Schur function expansion of $\mca{F}(R_{\mu},z;q)$ essentially coincides with the Kostka-Foulkes polynomial $K_{\la\mu}(q)$. This gives a geometric interpretation of Kostka-Foulkes polynomials. 

According to De Concini-Procesi and Tanisaki (\cite{MR629470}, \cite{MR685425}), $R_{\mu}$ have an explicit description as a quotient of the coinvariant ring $$R_{n}=\bb{C}[x_{1},\ldots,x_{n}]/\langle (x_{1},\ldots,x_{n})^{\mf{S}_{n}}\rangle.$$ Let $DR_{n}$ be the ring of coinvariants for the diagonal action of the symmetric group $\mf{S}_{n}$ on $\bb{C}^{n}\oplus \bb{C}^{n}$ defined by $$DR_{n}=\bb{C}[\mb{x},\mb{y}]/\langle(\mb{x},\mb{y})\cap \bb{C}[\mb{x},\mb{y}]^{\mf{S}_{n}}\rangle.$$ Here $\bb{C}[\mb{x},\mb{y}]=\bb{C}[x_{1},y_{1},\ldots ,x_{n},y_{n}]$ is the ring of polynomial functions on $\bb{C}^{n}\oplus \bb{C}^{n}$ and the symmetric group acts diagonally. Then $DR_{n}$ can be considered to be a doubled analogue of $R_{n}$. This is related to $q,t$-analogue of Kostka-Foulkes polynomials or Macdonald polynomials.

Let $\tilde{H}_{\mu}$ be the modified Macdonald polynomials (see e.g. \cite{MR2051783} for the definition). The $\tilde{H}_{\mu}$'s form a basis of the ring of symmetric polynomials with coefficients in $\bb{Q}(q,t)$. Let $\nabla$ be the linear operator defined in terms of the modified Macdonald polynomials by $$\nabla \tilde{H}_{\mu}=t^{n(\mu)}q^{n(\mu')}\tilde{H}_{\mu}.$$ Here, $n(\mu)=\sum_{i}(i-1)\mu_{i}$ and $\mu'$ is the conjugate of $\mu$. 

The $\mf{S}_{n}$-action on $DR_{n}$ respects the bigrading $$DR_{n}=\oplus_{r,s}(DR_{n})_{r,s}$$ given by the x- and y-degrees. Hence we can consider its bigraded Frobenius series $\mca{F}(DR_{n},z;q,t)$. The following theorem proved by using the geometry of Hilbert schemes expresses $\mca{F}(DR_{n},z;q,t)$ in terms of $\nabla$.

\begin{thm}[\cite{MR1918676}]
We have $$\mca{F}(DR_{n},z;q,t)=\nabla e_{n}(z).$$ Here $e_{n}(z)$ is the elementary symmetric function of degree $n$.
\end{thm}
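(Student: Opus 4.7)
The plan is to realize $DR_{n}$ geometrically via the Hilbert scheme of points in the plane and then compute its bigraded Frobenius series by equivariant localization. First I would introduce $\mr{Hilb}^{n}(\bb{C}^{2})$ together with its \emph{isospectral cover} $X_{n}$, defined as the reduced fiber product of $\mr{Hilb}^{n}(\bb{C}^{2})$ and $(\bb{C}^{2})^{n}$ over the symmetric product $S^{n}\bb{C}^{2}$. The pushforward $P := \rho_{\ast}\mca{O}_{X_{n}}$ along the projection $\rho\colon X_{n}\to \mr{Hilb}^{n}(\bb{C}^{2})$ is the Procesi bundle, an $\mf{S}_{n}$-equivariant coherent sheaf whose fibers, once $P$ is known to be locally free, carry the regular representation of $\mf{S}_{n}$.

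The decisive technical input is the \emph{$n!$ theorem}: $X_{n}$ is Cohen--Macaulay, in fact Gorenstein, so $P$ is locally free of rank $n!$. I would prove this following Haiman, via a detailed study of the polygraph varieties $Z(n,\ell)\subset (\bb{C}^{2})^{n+\ell}$, showing by an intricate induction on $\ell$ that their coordinate rings $R(n,\ell)$ are free modules over $\bb{C}[\mb{x},\mb{y}]$; the Cohen--Macaulay property of $X_{n}$ is then extracted from this freeness together with an analysis of how $X_{n}$ sits inside the polygraph after taking the alternating isotypic component. This step is by far the main obstacle: all later computations reduce to it, and the freeness of $R(n,\ell)$ depends essentially on the two-dimensionality of $\bb{C}^{2}$, with no general structural reason apparent in advance.

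Given the $n!$ theorem, one identifies $DR_{n}$ with a concrete space of $\mf{S}_{n}$-isotypic sections of a twist of $P$ on $\mr{Hilb}^{n}(\bb{C}^{2})$: sending the generators $x_{i}, y_{i}$ of $\bb{C}[\mb{x},\mb{y}]$ to coordinate functions on $X_{n}$ produces an $\mf{S}_{n}$-equivariant map from $DR_{n}$ into the sign-isotypic part of a global section module, and surjectivity together with the match of bigradings follows from the regular-representation structure of the fibers of $P$ combined with a vanishing theorem for higher cohomology of positive twists of $P$ on $\mr{Hilb}^{n}(\bb{C}^{2})$.

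Finally, the bigraded Frobenius series is computed by Atiyah--Bott localization for the torus $(\bb{C}^{\ast})^{2}$ acting on $\mr{Hilb}^{n}(\bb{C}^{2})$ through its scaling action on $\bb{C}^{2}$. The fixed points are the monomial ideals $I_{\mu}$ for $\mu\vdash n$, and the key geometric statement---a consequence of the $n!$ theorem---is that the Frobenius character of the fiber $P|_{I_{\mu}}$ is precisely the modified Macdonald polynomial $\tilde{H}_{\mu}(z;q,t)$. Assembling the localization contributions with the tangent-space weights at each $I_{\mu}$ and comparing with the expansion of $e_{n}$ in the $\tilde{H}_{\mu}$-basis gives
$$\mca{F}(DR_{n},z;q,t)=\sum_{\mu\vdash n}t^{n(\mu)}q^{n(\mu')}\,\langle \tilde{H}_{\mu}, e_{n}\rangle_{\ast}\,\tilde{H}_{\mu}(z;q,t),$$
which by the definition of $\nabla$ equals $\nabla e_{n}(z)$.
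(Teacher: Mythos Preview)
The paper does not give its own proof of this statement: it is quoted as a background result, attributed to Haiman via the citation \cite{MR1918676}, and then used to motivate the combinatorial conjectures that follow. So there is nothing in the paper to compare your proposal against.

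That said, what you have written is a reasonable high-level outline of Haiman's actual argument in the cited work: the isospectral Hilbert scheme $X_{n}$, the Procesi bundle $P=\rho_{\ast}\mca{O}_{X_{n}}$, the $n!$ theorem via freeness of polygraph coordinate rings, the identification of the fiber $P|_{I_{\mu}}$ with a module of Frobenius character $\tilde{H}_{\mu}$, vanishing of higher cohomology, and torus localization at the monomial ideals. One point to tighten: the passage from $P$ to $DR_{n}$ is not quite ``mapping generators to coordinate functions and reading off the sign-isotypic part of global sections''; rather, Haiman realizes (a twist of) $DR_{n}$ as global sections of $P\otimes\mca{O}(1)$ restricted to the zero-fiber $Z_{n}\subset\mr{Hilb}^{n}(\bb{C}^{2})$, and the required vanishing is for $R^{i}\Gamma$ of that specific sheaf. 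Your final displayed formula also hides the nontrivial step of identifying the localization sum with $\nabla e_{n}$: one needs the Macdonald-theoretic identity expressing $e_{n}$ in the $\tilde{H}_{\mu}$ basis with the correct $(q,t)$-hook denominators, matched against the tangent weights at $I_{\mu}$. These are refinements rather than gaps, but in a full write-up they are where the work lies.
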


An important problem in combinatorics of diagonal coinvariants is to find a combinatorial description of $\mca{F}(DR_{n},z;q,t)$. In \cite{MR2115257}, Haglund, Haiman, Loehr, Remmel, and Ulyanov proposed a combinatorial formula which conjecturally gives the monomial symmetric function expansion of $\mca{F}(DR_{n},z;q,t)$. We briefly recall their description below.

Let $$\delta_{n}=(n-1,n-2,\ldots ,1,0)$$ be the staircase partition. Let $\la\subset\delta_{n}$ be a partition. Let $T\in\mr{SSYT}(\la +(1^{n})/\la)$ be a semistandard tableau of skew shape $\la +(1^{n})/\la$. For every box $x=(i,j)\in \bb{N}\times \bb{N}$, we set $d(x)=i+j$. Given two entries $T(x)=a$ and $T(y)=b$ with $a<b$, $x=(i,j)$, $y=(i',j')$. We say that these two entries form a $d$-{\it inversion} if either
\begin{enumerate}
\item $d(y)=d(x)$ and $j>j'$ or
\item $d(y)=d(x)+1$ and $j<j'$.
\end{enumerate}

We set $\mr{dinv}(T)$ to be the number of $d$-inversions of $T$. See Figure 1 for an example.

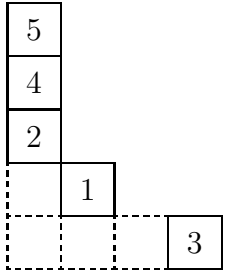
\begin{figure}
\begin{center}
\setlength{\unitlength}{1pt}
\begin{picture}(100,100)

\put(0.2,80.4){\framebox(19.6,19.8){5}}
\put(0.2,60.3){\framebox(19.6,19.8){4}}
\put(0.2,40.2){\framebox(19.6,19.8){2}}
\put(20.2,20.2){\framebox(19.6,19.6){1}}
\put(60.2,0.2){\framebox(19.8,19.6){3}}

\put(0,20){\dashbox{2}(20,20){}}
\put(0,0){\dashbox{2}(20,20){}}
\put(20,0){\dashbox{2}(20,20){}}
\put(40,0){\dashbox{2}(20,20){}}

\end{picture}
\caption{An example of semistandard Young tableau of skew shape $\la+(1^{n})/\la$, where $n=5$ and $\la=(3,1)$. In this case, d-inversions are the pairs $(1,2),(1,3),(2,3),(3,4)$. Hence $\mr{dinv}=4$.}
\end{center}
\end{figure}

\begin{dfn}
For $\la\subset\delta_{n}$, we set $$D^{\la}_{n}(z;q)=\sum_{T\in \mr{SSYT}(\la +(1^{n})/\la)}q^{\mr{dinv}(T)}z^{T},$$ and $$D_{n}(z;q,t)=\sum_{\la\subset\delta_{n}}t^{|\delta_{n}/\la|}D^{\la}_{n}(z;q).$$
\end{dfn}

\begin{conj}[\cite{MR2115257}]
We have an identity $$\mca{F}(DR_{n},z;q,t)=D_{n}(z;q,t).$$
\end{conj}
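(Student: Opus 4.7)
The plan is geometric: realize $D_{n}(z;q,t)$ as the bigraded Frobenius series of the Borel--Moore homology of an appropriate affine Springer fiber, and then identify that homology with $DR_{n}$ by way of the Hilbert scheme interpretation underlying Theorem 1.1. Concretely, I would fix a regular semisimple nil elliptic element $\gamma\in \mf{sl}_{n}(\bb{C}((\ep)))$ of the ``homogeneous'' type, for instance one whose characteristic polynomial is $T^{n}-\ep$, and study its affine Springer fiber $\mca{X}_{\gamma}$ inside the affine flag variety of $SL_{n}$. There is a natural $\bb{G}_{m}$-action on $\mca{X}_{\gamma}$ coming from loop rotation combined with a cocharacter of the regular torus centralizing $\gamma$; because $\gamma$ is homogeneous this action has isolated fixed points, and a Bia\l{}ynicki--Birula contraction should produce an affine paving.

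The combinatorial heart of the proof is to label the cells of this paving by pairs $(\la,T)$ with $\la\subset\delta_{n}$ and $T\in\mr{SSYT}(\la+(1^{n})/\la)$, and to check that the dimension of a cell equals $\mr{dinv}(T)$ while its $\bb{G}_{m}$-weight equals $|\delta_{n}/\la|$. The target fixed points can be parametrized by affine Weyl group elements modulo the finite Weyl group; the content of this step is to choose explicit lattice representatives so that the two statistics can be read off directly. Using these cell dimensions and weights, the Borel--Moore homology $H_{\ast}^{BM}(\mca{X}_{\gamma})$ acquires a bigrading with Poincar\'e polynomial matching the unsymmetrized version of $D_{n}(z;q,t)$.

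Next, I would lift the $\mf{S}_{n}$-action (coming from the finite Weyl subgroup inside the centralizer of $\gamma$, or equivalently from Lusztig's Springer action on the affine flag variety) to $H_{\ast}^{BM}(\mca{X}_{\gamma})$ in a way that preserves both gradings. The goal is to show that, on the $\la$-piece, the equivariant Frobenius series is exactly $D^{\la}_{n}(z;q)$, and hence that $\mca{F}(H_{\ast}^{BM}(\mca{X}_{\gamma}),z;q,t)=D_{n}(z;q,t)$. This is plausible because the affine paving is $\mf{S}_{n}$-equivariant and each $\la$-fixed stratum fibers over a classical Springer fiber whose Frobenius character produces the Schur function $z^{T}$ after summing over fillings.

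The last and hardest step is to match $H_{\ast}^{BM}(\mca{X}_{\gamma})$ with $DR_{n}$ as bigraded $\mf{S}_{n}$-modules. The natural bridge is the Hitchin/Hilbert scheme picture: the affine Springer fiber $\mca{X}_{\gamma}$ admits a proper map to a punctual Hilbert scheme $\mr{Hilb}^{n}(\bb{C}^{2})$, on which $DR_{n}$ is realized by Haiman's work underlying Theorem 1.1. Producing an isomorphism of bigraded $\mf{S}_{n}$-modules between $H_{\ast}^{BM}(\mca{X}_{\gamma})$ and $DR_{n}$ --- not merely an equality of graded dimensions --- is the main obstacle, since it requires matching two a priori unrelated bigradings and proving that the induced comparison map is an isomorphism. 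I expect the combinatorial/geometric steps above to be within reach, while the definitive identification with $DR_{n}$ is the genuinely difficult point and is likely the reason the equality is still stated here as a conjecture rather than a theorem.
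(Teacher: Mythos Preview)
The statement you are trying to prove is Conjecture~1.3 in the paper, and the paper does \emph{not} prove it; it is quoted from \cite{MR2115257} as an open conjecture. There is therefore no ``paper's own proof'' to compare against. What the paper \emph{does} prove is precisely the part of your plan that you describe as the first two steps: it realizes (a twist of) $D_{n}(z;q,t)$ as the bigraded Frobenius series of the associated graded of $H^{\mr{BM}}_{\ast}(\hat{\mca{B}}_{v})$ for a specific regular semisimple nil elliptic element $v$ (Theorems~1.4 and~1.5, i.e.\ Theorems~4.13 and~4.15 with $m=b=1$). Your final step---identifying this homology with $DR_{n}$ via a Hilbert-scheme comparison---is exactly the gap the paper leaves open, and you are right that this is why the identity remains a conjecture here.

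A few corrections to your sketch so that it matches what the paper actually carries out. The element used is $v=\epsilon^{2}e_{-\theta}+\epsilon\sum_{i}e_{\alpha_{i}}$, whose characteristic polynomial is (up to sign) $T^{n}-\epsilon^{\,n+1}$, not $T^{n}-\epsilon$. The $\mf{S}_{n}$-action is obtained from Lusztig's affine Springer action (Section~2.2), not from an action of the centralizer of $v$ on the fiber. The second grading is \emph{not} a $\bb{G}_{m}$-weight grading on $H^{\mr{BM}}_{\ast}$: the paper produces a closed filtration $Y_{\leq c}\subset\hat{\mca{B}}_{v}$ coming from the closure relations of Iwahori orbits on the affine Grassmannian (Corollary~4.6), and passes to the associated graded; the parameter $c$ indeed equals $|\delta_{n}/\la|$. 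Finally, the cell dimension is not $\mr{dinv}(T)$ but its complement $\binom{n}{2}-\mr{dinv}(T)$ (Propositions~4.10, 4.12 and Corollary~3.6), which is why the paper's formula reads $q^{\binom{n}{2}}\omega D_{n}(z;q^{-1},t)$ rather than $D_{n}(z;q,t)$. None of this changes your overall assessment: your geometric plan up to the $DR_{n}$ identification is essentially the content of the paper, and the identification itself is not established here.
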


It is proved in \cite{MR2115257} that $D^{\la}_{n}(z;q)$ are symmetric and Schur positive. Their proof uses the theory of LLT polynomials (\cite{MR1434225}, \cite{MR1864481}) and their Schur positivity. They also conjectured a similar formula for $\nabla^{m}e_{n}(z)$. 

In this paper, we give a geometric interpretation of $D^{\la}_{n}(z;q)$ and $D_{n}(z;q,t)$ by using the geometry of affine analogue of Springer fibers of type $A$. This provides another proof for Schur positivity of $D^{\la}_{n}(z;q)$. 

Let $\alpha_{1},\ldots,\alpha_{n-1}$ be the simple roots of $\mf{sl}_{n}(\bb{C})$ associated to the Borel subalgebra whose elements consist of upper triangular matrices. Let $\theta=\alpha_{1}+\cdots+\alpha_{n-1}$. For each root $\alpha$, we take a nonzero element $e_{\alpha}$ in the root space attached to $\alpha$. We set $$v=\epsilon^{2}e_{-\theta}+\epsilon\sum_{i=1}^{n-1}e_{\alpha_{i}}\in\mf{sl}_{n}[[\epsilon]].$$ This is a regular semisimple nil elliptic element. Let $\hat{\mca{B}}_{v}$ be the set of Iwahori subalgebras in $\mf{sl}_{n}(\!(\epsilon)\!)$ containing $v$. By Kazhdan and Lusztig \cite{MR947819}, $\hat{\mca{B}}_{v}$ has a structure of algebraic variety over $\bb{C}$. Let $\hat{\mca{B}}$ and $X$ be the affine flag variety and affine Grassmannian of $SL_{n}$, and let $\pi:\hat{\mca{B}}\rightarrow X$ be the natural projection. The subvariety $\hat{\mca{B}}_{v}\subset\hat{\mca{B}}$ is closed. Let $X_{v}$ be the image of $\hat{\mca{B}}_{v}$ under $\pi$. 

By Goresky, Kottwitz, and MacPherson \cite{MR2209851}, we have a paving of $X_{v}$ by affine spaces. Each cell is constructed as an intersection of $X_{v}$ and an Iwahori orbit of $X$. We show that nonempty cells are parametrized by Young tableaux contained in $\delta_{n}$ (see Proposition 4.8).

Let $C_{\la}$ be the cell corresponding to $\la\subset\delta_{n}$. Then $\mf{S}_{n}$ acts on the Borel-Moore homology of $$\hat{\mca{B}}_{v,\la}=\pi^{-1}(C_{\la})$$ (see section 2). We also have an affine paving on $\hat{\mca{B}}_{v,\la}$. Hence the Borel-Moore homology $H^{\mr{BM}}_{i}(\hat{\mca{B}}_{v,\la})$ vanishes for all odd $i$. 

We define a grading on $H^{\mr{BM}}_{\ast}(\hat{\mca{B}}_{v,\la})$ by declaring $H^{\mr{BM}}_{2i}(\hat{\mca{B}}_{v,\la})$ to be of degree $i$. The $\mf{S}_{n}$-action preserves the grading. Hence we can consider its Frobenius series $\mca{F}(H^{\mr{BM}}_{\ast}(\hat{\mca{B}}_{v,\la}),z;q)$. 

\begin{thm}[Theorem 4.13 for $m=1$ and $b=1$]
We have $$\mca{F}(H^{\mr{BM}}_{\ast}(\hat{\mca{B}}_{v,\la}),z;q)=q^{\binom{n}{2}}\omega D^{\la}_{n}(z;q^{-1}).$$ Here $\omega$ denotes the standard involution on the ring of symmetric polynomials.
\end{thm}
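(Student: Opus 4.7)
The plan is to combine the affine paving of $\hat{\mca{B}}_{v,\la}$ with an analysis of how the $\mf{S}_{n}$-action permutes its cells, reading off the Frobenius character cell by cell. Since $\pi:\hat{\mca{B}}\to X$ is Iwahori-equivariant with fiber $\mr{SL}_n/B$ and every Iwahori orbit of $\hat{\mca{B}}$ projects onto an Iwahori orbit of $X$, the intersection of $\hat{\mca{B}}_{v,\la}=\pi^{-1}(C_\la)$ with each Iwahori orbit of $\hat{\mca{B}}$ is either empty or an affine space. This upgrades the Goresky--Kottwitz--MacPherson paving of $X_v$ by the $C_\la$'s ($\la\subset\delta_n$, Proposition~4.8) to an affine paving of $\hat{\mca{B}}_{v,\la}$, so the closures of the cells form a basis of $H^{\mr{BM}}_{\ast}(\hat{\mca{B}}_{v,\la})$, with a cell of complex dimension $d$ contributing to $H^{\mr{BM}}_{2d}$. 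The task thus reduces to computing the $\mf{S}_n$-character of the span of cells in each dimension.

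The next step is to construct an explicit bijection between the cells of $\hat{\mca{B}}_{v,\la}$ and $\mr{SSYT}(\la+(1^n)/\la)$ sending a tableau $T$ to a cell of complex dimension $\binom{n}{2}-\mr{dinv}(T)$. The column strip $\la+(1^n)/\la$ has exactly one box per row, so a semistandard filling is a natural bookkeeping device for the ``flag data'' in the fiber of $\pi|_{\hat{\mca{B}}_{v,\la}}$: the entry in row $i$ should record in which block the $i$-th step of the $v$-compatible flag lies, and the semistandard condition should reflect $v$-stability. The dimension $|\delta_n/\la|$ of $C_\la$ accounts for the base contribution, while the remaining $\binom{n}{2}-|\delta_n/\la|-\mr{dinv}(T)$ comes from the fiber cell; the two cases $d(y)=d(x)$ with $j>j'$ and $d(y)=d(x)+1$ with $j<j'$ in the definition of a $d$-inversion should correspond to the positive affine roots whose one-parameter subgroups sweep out the fiber cell.

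The third ingredient is the $\mf{S}_{n}$-action on $H^{\mr{BM}}_\ast$ recalled in section~2, which arises from the centralizer of the regular semisimple element $v$ in $\mr{SL}_n(\!(\epsilon)\!)$. I would extract its character cell by cell: cells indexed by tableaux of a fixed content $\alpha=(n_1,\ldots,n_n)$ span an $\mf{S}_n$-subrepresentation that, after accounting for the sign twist built into the Lusztig/Springer construction on Borel-Moore cycles, is the representation induced from the sign character of the Young subgroup $\mf{S}_{n_1}\times\cdots\times\mf{S}_{n_n}$. Its Frobenius image is $e_\alpha$ rather than $h_\alpha$, which is the source of the $\omega$-twist in the final formula. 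Combined with the $H^{\mr{BM}}_{2d}\mapsto q^d$ grading (so a cell of dimension $d=\binom{n}{2}-\mr{dinv}(T)$ contributes $q^{\binom{n}{2}-\mr{dinv}(T)}$), summation over $T\in\mr{SSYT}(\la+(1^n)/\la)$ then yields $q^{\binom{n}{2}}\omega D^\la_n(z;q^{-1})$, as claimed.

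The main obstacle will be the combinatorial identification of the fiber cell dimension with $\binom{n}{2}-|\delta_n/\la|-\mr{dinv}(T)$, which requires carefully unpacking the affine length function on the minimal-length coset representatives of Proposition~4.8 and matching it with the two-case dinv recipe on the staircase $\delta_n$; once this is in hand, the $\omega$-twist and the $q\to q^{-1}$ substitution follow formally from the conventions for Borel-Moore cycles versus cohomology classes.
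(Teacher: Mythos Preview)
Your proposal has a genuine gap in how it handles the $\mf{S}_n$-action. First, the action does \emph{not} arise from the centralizer of $v$: since $v$ is elliptic, $\Lambda_v=0$ (this is the definition of ``elliptic'' in Section~2.2), so the centralizer is an anisotropic torus and supplies no lattice symmetry. The $\mf{S}_n$-action used here is the Springer-type action constructed sheaf-theoretically in Section~2.2 via the Grothendieck alteration and Borho--MacPherson's theorem; it exists only on (co)homology and is not induced by any action on $\hat{\mca{B}}_{v,\la}$ itself. In particular there is no reason for it to permute the affine cells, nor for the span of cells of a fixed ``content'' to be an $\mf{S}_n$-subrepresentation. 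There is also a cardinality mismatch in your bijection: the Iwahori cells of $\hat{\mca{B}}_{v,\la}$ lying over $C_\la$ are indexed by $W=\mf{S}_n$ (exactly $n!$ of them, corresponding to \emph{standard} tableaux, all of content $(1^n)$), whereas $\mr{SSYT}(\la+(1^n)/\la)$ is infinite; so no grouping of cells by content can produce the monomial expansion you want.

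The paper's route avoids all of this by never attempting to decompose the Springer action along the cell basis. It uses Proposition~2.6: taking $W_\mu$-invariants of $\pi^{J}_{!}\bb{C}$ gives the constant sheaf on the \emph{partial} affine Springer fiber $\hat{\mca{P}}_{\mu,v}$, so that $\dim H^{\mr{BM}}_{2i}(\pi^{-1}(C_\la))^{\mf{S}_\mu}=\dim H^{\mr{BM}}_{2i}(\pi_\mu^{-1}(C_\la))$. The right-hand side is then read off from the affine paving of the partial variety, whose cells are parametrized by $\mr{SSYT}_{-}(\la+(1^n)/\la,\mu)$ (Proposition~4.11) with dimensions $d(\la)+\mr{dinv}''(T)$ (Proposition~4.12). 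Since $\langle\mca{F}(V),h_\mu\rangle=\dim V^{\mf{S}_\mu}$ (Lemma~3.3) and the $h_\mu$ form a basis of degree-$n$ symmetric functions, this determines the Frobenius series. The $\omega$ and the substitution $q\mapsto q^{-1}$ come from Lemma~3.9 (negative tableaux compute $\omega D^\la$) and Corollary~3.6 ($\mr{dinv}''=m(\la)-\mr{dinv}$), not from a sign twist of the Springer action on cycle classes.
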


Let $c$ be a nonnegative integer. We set $$Y_{\leq c}=\bigsqcup_{\substack{\la\subset\delta_{n} \\ |\delta_{n}/\la|\leq c}}\hat{\mca{B}}_{v,\la}.$$ We show that $Y_{\leq c}$ is a closed subvariety of $\hat{\mca{B}}_{v}$ (see Corollary 4.6). Hence $Y_{\leq c}$'s form a stratification of $\hat{\mca{B}}_{v}$. This gives a filtration on $H^{\mr{BM}}_{\ast}(\hat{\mca{B}}_{v})$. This filtration is stable under the $\mf{S}_{n}$-action. Therefore, we obtain a bigraded $\mf{S}_{n}$-module $\mr{gr}_{\ast}H^{\mr{BM}}_{\ast}(\hat{\mca{B}}_{v})$ by taking the associated graded. 

\begin{thm}[Theorem 4.15 for $m=1$ and $b=1$]
We have $$\mca{F}(\mr{gr}_{\ast}H^{\mr{BM}}_{\ast}(\hat{\mca{B}}_{v}),z;q,t)=q^{\binom{n}{2}}\omega D_{n}(z;q^{-1},t).$$
\end{thm}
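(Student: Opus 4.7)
The plan is to deduce this from Theorem 4.13 by combining the long exact sequence of the stratification by the $Y_{\leq c}$ with the vanishing of odd Borel--Moore homology.

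First I would set up the geometric input. By Corollary 4.6, $Y_{\leq c}$ is closed in $\hat{\mca{B}}_{v}$, and the complement of $Y_{\leq c-1}$ inside $Y_{\leq c}$ is the open subvariety $\bigsqcup_{|\delta_{n}/\la|=c}\hat{\mca{B}}_{v,\la}$. The filtration and each stratum are stable under the $\mf{S}_{n}$-action of Section~2, so the standard long exact sequence in Borel--Moore homology for a closed/open decomposition becomes the $\mf{S}_{n}$-equivariant sequence
\begin{equation*}
\cdots \to H^{\mr{BM}}_{i}(Y_{\leq c-1}) \to H^{\mr{BM}}_{i}(Y_{\leq c}) \to \bigoplus_{|\delta_{n}/\la|=c} H^{\mr{BM}}_{i}(\hat{\mca{B}}_{v,\la}) \to H^{\mr{BM}}_{i-1}(Y_{\leq c-1}) \to \cdots .
\end{equation*}

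Next I would exploit the affine pavings. Each $\hat{\mca{B}}_{v,\la}$ admits an affine paving (as used in the proof of Theorem 4.13), and these assemble into an affine paving of every $Y_{\leq c}$; consequently all the Borel--Moore homology groups appearing above are free and concentrated in even degrees. The long exact sequence therefore breaks into short exact sequences of $\mf{S}_{n}$-modules, yielding an identification of the $c$-th associated graded piece
\begin{equation*}
\mr{gr}_{c}H^{\mr{BM}}_{\ast}(\hat{\mca{B}}_{v}) \;\cong\; \bigoplus_{|\delta_{n}/\la|=c} H^{\mr{BM}}_{\ast}(\hat{\mca{B}}_{v,\la})
\end{equation*}
as graded $\mf{S}_{n}$-modules, where the internal grading is the cohomological one within each stratum.

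Summing this identification over $c$, weighting the $c$-th piece by $t^{c}$, and inserting Theorem 4.13 then gives
\begin{equation*}
\mca{F}(\mr{gr}_{\ast}H^{\mr{BM}}_{\ast}(\hat{\mca{B}}_{v}),z;q,t)
= \sum_{\la\subset\delta_{n}} t^{|\delta_{n}/\la|} q^{\binom{n}{2}} \omega D^{\la}_{n}(z;q^{-1})
= q^{\binom{n}{2}}\omega D_{n}(z;q^{-1},t),
\end{equation*}
which is the desired identity. The hard part is not here but in the prerequisites: the parametrization of cells by $\la\subset\delta_{n}$ (Proposition 4.8), the closedness statement of Corollary 4.6, and most substantially Theorem 4.13. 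Once those are available, the present theorem is a formal consequence of parity vanishing of $H^{\mr{BM}}_{\ast}$, and the only residual point to verify is the $\mf{S}_{n}$-equivariance of the long exact sequence, which follows from functoriality of the action reviewed in Section~2.
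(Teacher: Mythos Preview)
Your proposal is correct and follows essentially the same approach as the paper: define the filtration by the closed subvarieties $Y_{\leq c}$ (using Corollary~4.6), use the affine pavings to see that the long exact sequences in Borel--Moore homology split into $\mf{S}_{n}$-equivariant short exact sequences, identify the associated graded pieces with $\bigoplus_{|\delta_{n}/\la|=c}H^{\mr{BM}}_{\ast}(\hat{\mca{B}}_{v,\la})$, and then sum Theorem~4.13 over $\la$ weighted by $t^{|\delta_{n}/\la|}$. The paper's proof is terser---it simply asserts the inclusion $H^{\mr{BM}}_{\ast}(Y_{\leq c})\subseteq H^{\mr{BM}}_{\ast}(\hat{\mca{B}}_{v})$ from the affine paving---but the content is the same.
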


Actually, we compute the Borel-Moore homology of more general affine Springer fibers of type $A$ associated to regular semisimple nil elliptic elements. As a result, we obtain a generalization of $D^{\la}_{n}(z;q)$ and $D_{n}(z;q,t)$ (see section 3). 

In \cite{MR1486037}, Sommers determined the cohomology of affine Springer fibers associated to some regular semisimple nil elliptic homogeneous elements as $\mf{S}_{n}$-modules without grading. Our main theorem refines the result of Sommers in the case of type $A$. 

We remark that if we can provide another method for calculating the homology of the affine Springer fibers, we would get a formula expressing $D_{n}(z;q,t)$. Let $\pi_{v}:\hat{\mca{B}}_{v}\rightarrow X_{v}$ be the natural projection. Then, the fibers of $\pi_{v}$ are classical Springer fibers, whose homology can be described in terms of Kostka-Foulkes polynomials. Hence if we can describe the stratification of $C_{\la}$ defined by the form of the fibers of $\pi$, then we can express $D^{\la}_{n}(z;q,t)$ in terms of Kostka-Foulkes polynomials. This gives a geometric interpretation of some known results such as Theorem 6.8 in \cite{MR2371044}. However, it seems to be a difficult problem to describe the stratification in general.  

Let us describe the organization of the paper. Section 2 and section 3.1 are review of some known results. In section 2.1, we briefly review a construction of Weyl group action on the homology of Springer fibers. In section 2.2, we review a construction of affine Weyl group action on the homology of affine Springer fibers following \cite{MR1390751}. In section 2.3, we review the result of \cite{MR2209851} for special cases we need. In section 3.1, we introduce some notation on combinatorics and review some facts about quasi-symmetric functions and LLT polynomials from \cite{MR2115257}. In section 3.2, we introduce a generalization of $D^{\la}_{n}(z;q)$ and $D_{n}(z;q,t)$. In section 4, we prove our main theorem. 

The author deeply thanks Professor Eric Sommers for suggesting the problem and sharing his insights.

\section{Springer Theories}

\subsection{Weyl group actions: finite case}\mbox{}

First, we introduce some sheaf theoretic notation. Let $f:X\rightarrow Y$ be a morphism between complex algebraic varieties. We denote by $D^{b}_{c}(X)$ the bounded derived category of constructible sheaves on $X$ and let $f_{\ast},f_{!}:D^{b}_{c}(X)\rightarrow D^{b}_{c}(Y)$, $f^{\ast},f^{!}:D^{b}_{c}(Y)\rightarrow D^{b}_{c}(X)$ be the usual derived functors. Note that these sheaf-theoretic functors are understood to be derived without $R$ or $L$ in the front. We denote by $\bb{C}_{X}$ the constant sheaf on $X$ and  denote by $\bb{D}_{X}=p^{!}\bb{C}_{\mr{Spec}(\bb{C})}$ the dualizing complex of $X$, where $p$ is the natural map from $X$ to $\mr{Spec}(\bb{C})$. 

The $i$-th Borel-Moore homology $H^{\mr{BM}}_{i}(X)$ of $X$ is defined by $$H^{\mr{BM}}_{i}(X)=\bb{H}^{-i}(X,\bb{D}_{X}),$$ where we denote by $\bb{H}^{\ast}$ the functor of taking hypercohomology. 

Let $Y\hookrightarrow  X$ be an embedding of a smooth connected locally closed subvariety of complex dimension $d$ and $\bar{Y}$ the closure of $Y$, and let $\mca{L}$ be a local system on $Y$. Let $j:Y\rightarrow\bar{Y}$ and $i:\bar{Y}\rightarrow X$ be the natural inclusion. We denote by $IC(Y,\mca{L})$ the shift of the intermediate extension $i_{\ast}j_{!\ast}\mca{L}[d]$ of $\mca{L}$ to $X$.

Let $Z$ be a smooth complex algebraic variety, $X$ be an irreducible complex algebraic variety, and $\pi:Z\rightarrow X$ be a proper morphism. We fix a stratification $X=\sqcup\mca{O}_{x}$ of $X$ into a finite number of smooth irreducible subvarieties such that the restriction $\pi_{x}:\pi^{-1}(\mca{O}_{x})\rightarrow\mca{O}_{x}$ of $\pi$ to $\pi^{-1}(\mca{O}_{x})$ is a locally trivial topological fibration for each $\mca{O}_{x}$. It is known that such a stratification exists. 

Let $\mca{O}_{0}$ be the dense open stratum of $X$. We fix a point $x\in\mca{O}_{x}$ in each stratum. Then the fiber $\pi^{-1}(x)$ of each point of $\mca{O}_{x}$ is independent of the choice of $x\in\mca{O}_{x}$. 

We set $d_{x}=\dim\pi^{-1}(x)$ and $c_{x}=\dim X-\dim\mca{O}_{x}$. Then $\pi$ is called {\it semismall} if it satisfies $2d_{x}\leq c_{x}$ for each $\mca{O}_{x}$, and it is called {\it small} if it is semismall and for each $\mca{O}_{x}\ne\mca{O}_{0}$, we have $2d_{x}<c_{x}$. 

If $\pi$ is semismall, then the fiber of each point of $\mca{O}_{0}$ must be zero dimensional. Hence the direct image sheaf $$\mca{L}:=(\pi_{0})_{\ast}\bb{C}_{\pi^{-1}(\mca{O}_{0})}$$ of the constant sheaf on $\pi^{-1}(\mca{O}_{0})$ is concentrate on zero-th degree and is a local system on $\mca{O}_{0}$. Note also that $\dim Z=\dim X$.

\begin{lem}[\cite{MR1021493}]
Let $n=\dim Z$. If $\pi$ is small, then we have $$\pi_{\ast}\bb{C}_{Z}[n]=IC(\mca{O}_{0},\mca{L}).$$
\end{lem}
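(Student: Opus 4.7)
The plan is to verify directly that $\pi_{\ast}\bb{C}_{Z}[n]$ satisfies the standard characterization of $IC(\mca{O}_{0},\mca{L})$: namely, that it extends $\mca{L}[n]$ from $\mca{O}_{0}$, has vanishing cohomology sheaves in degrees below $-n$, and that both it and its Verdier dual satisfy the support estimate $\dim\mr{supp}\,\mca{H}^{i}(-)<-i$ for all $i>-n$. I would check these conditions one at a time, the whole argument being essentially a bookkeeping exercise exploiting the smallness inequality.

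First I would handle the restriction to the open stratum via proper base change: one has $(\pi_{\ast}\bb{C}_{Z}[n])|_{\mca{O}_{0}}=(\pi_{0})_{\ast}\bb{C}_{\pi^{-1}(\mca{O}_{0})}[n]$, and since semismallness forces $d_{0}=0$, the map $\pi_{0}$ is proper with finite fibres between smooth equidimensional varieties, hence a finite topological cover; its pushforward of the constant sheaf is therefore precisely the local system $\mca{L}$ given by the definition just before the lemma.

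Next I would verify the support estimate. Proper base change gives $\mca{H}^{i}(\pi_{\ast}\bb{C}_{Z}[n])_{x}\cong H^{i+n}(\pi^{-1}(x))$ for $x$ in a stratum $\mca{O}_{x}$, which vanishes unless $0\leq i+n\leq 2d_{x}$. This already yields $\mca{H}^{i}(\pi_{\ast}\bb{C}_{Z}[n])=0$ for $i<-n$. For $i>-n$, nonvanishing on $\mca{O}_{x}\neq\mca{O}_{0}$ requires $2d_{x}\geq i+n$, and the strict smallness inequality $2d_{x}\leq c_{x}-1$ then forces $c_{x}\geq i+n+1$, so $\dim\mca{O}_{x}=n-c_{x}\leq -i-1<-i$, exactly as required.

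For the dual condition, since $Z$ is smooth of complex dimension $n$ one has $\bb{D}(\bb{C}_{Z}[n])\simeq\bb{C}_{Z}[n]$; combined with the fact that $\pi_{\ast}=\pi_{!}$ commutes with Verdier duality for proper $\pi$, this shows $\pi_{\ast}\bb{C}_{Z}[n]$ is self-Verdier-dual, so the cosupport condition reduces to the support condition already established, completing the identification with $IC(\mca{O}_{0},\mca{L})$. The substantive input is the strict smallness inequality, which is exactly what converts the fibre-dimension bound $d_{x}\leq(c_{x}-1)/2$ into the strict codimension bound demanded by an intermediate extension; I do not expect any real obstacle, since everything else is formal sheaf-theoretic manipulation.
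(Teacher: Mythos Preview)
The paper does not supply a proof of this lemma; it is simply quoted from the literature (reference \cite{MR1021493}) as background for the construction of Springer representations. Your argument is the standard verification that the shifted pushforward satisfies the Deligne--Goresky--MacPherson axioms for the intermediate extension, and it is correct; this is essentially what one finds in the cited source.
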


Let $G$ be a connected and simply connected reductive algebraic group over $\bb{C}$. Fix a Borel subgroup $B\subset G$ and a Cartan subgroup $T\subset B$. Let $\mf{g}$ and $\mf{b}$ be the Lie algebras of $G$ and $B$, respectively. Let $W$ be the Weyl group of $G$. Let $X^{\ast}(T)$ and $X_{\ast}(T)$ be the character and cocharacter lattices of $T$. Let $\Delta$ and $\check{\Delta}$ be the set of roots and coroots of $(G,T)$. Let $\mf{g}=\oplus_{\alpha\in\{0\}\cup\Delta}\mf{g}_{\alpha}$ be the root space decomposition of $\mf{g}$. We fix for each $\alpha\in\Delta$ a nonzero element $e_{\alpha}\in\mf{g}_{\alpha}$.

Let $$\tilde{\mf{g}}=\{(x,g\cdot B)\in \mf{g}\times G/B\mid \mr{Ad}(g)^{-1}x\in \mf{b}\}.$$ Let $\pi:\tilde{\mf{g}}\rightarrow\mf{g}$, where $\pi(x,g\cdot B)=x$, be the natural projection. Then it is known that $\pi:\tilde{\mf{g}}\rightarrow\mf{g}$ is a small morphism with the dense open stratum of $\mf{g}$ being the set of regular semisimple elements $\mf{g}_{\mr{rs}}$. 

Moreover, the restriction $$\pi_{0}:\tilde{\mf{g}}_{\mr{rs}}:=\pi^{-1}(\mf{g}_{\mr{rs}})\rightarrow\mf{g}_{\mr{rs}}$$ of $\pi$ to $\pi^{-1}(\mf{g}_{\mr{rs}})$ is known to be a principal $W$-bundle over $\mf{g}_{\mr{rs}}$. Hence $W$ acts on $$\mca{L}=(\pi_{0})_{\ast}\bb{C}_{\tilde{\mf{g}}_{\mr{rs}}}.$$ 

By functoriality, $W$ also acts on $IC(\mf{g}_{\mr{rs}},\mca{L})=\pi_{\ast}\bb{C}_{\tilde{\mf{g}}}$. By taking fiber at $x\in\mf{g}$, we get a $W$-action on the cohomology of Springer fibers $H^{\ast}(\pi^{-1}(x))$. 

This is the construction of Springer representations due to Lusztig, Borho and MacPherson. Note that $\mca{L}$ is $G$-equivariant by the $G$-equivariance of $\pi$ and the $W$-action is compatible with the $G$-equivariant structure. 

We also need parabolic versions of the above construction. Let $P$ be a parabolic subgroup of $G$ and $M$ be a Levi subgroup of $P$ such that $B\subset P$ and $T\subset M$. Let $W_{P}=N_{M}(T)/T$ be the Weyl group of $M$. Let $\mf{p}$ and $\mf{m}$ be the Lie algebras of $P$ and $M$ respectively. 

We set $$\tilde{\mf{g}}^{P}=\{(x,g\cdot P)\in\mf{g}\times G/P\mid \mr{Ad}(g)^{-1}x\in\mf{p}\}.$$  Let $\pi^{P}:\tilde{\mf{g}}\rightarrow\tilde{\mf{g}}^{P}$ be the morphism defined by $\pi^{P}(x,g\cdot B)=(x,g\cdot P)$. Let $\pi_{P}:\tilde{\mf{g}}_{P}\rightarrow\mf{g}$ be the morphism defined by $\pi_{P}(x,g\cdot P)=x$. 

We also set $\tilde{\mf{g}}^{P}_{\mr{rs}}=\pi_{P}^{-1}(\mf{g}_{\mr{rs}})$ and $$\mca{L}_{P}=(\pi_{P,0})_{\ast}\bb{C}_{\tilde{\mf{g}}^{P}_{\mr{rs}}}.$$ Here $\pi_{P,0}:\tilde{\mf{g}}^{P}_{\mr{rs}}\rightarrow\mf{g}_{\mr{rs}}$ is the restriction of $\pi_{P}$ to $\tilde{\mf{g}}^{P}_{\mr{rs}}$.

Then $\pi_{P}$ is a small morphism. Hence we have $IC(\mf{g}_{\mr{rs}},\mca{L}_{P})=(\pi_{P})_{\ast}\bb{C}_{\tilde{\mf{g}}^{P}}$. Since the restriction $\tilde{\mf{g}}_{\mr{rs}}\rightarrow\tilde{\mf{g}}^{P}_{\mr{rs}}$ of $\pi^{P}$ to $\tilde{\mf{g}}_{\mr{rs}}$ is a principal $W_{P}$-bundle, we have $$\mca{L}_{P}=\mca{L}^{W_{P}}.$$ Here the superscript $W_{P}$ denotes $W_{P}$-invariants. Hence by applying the functor of intermediate extension, we obtain the following.

\begin{prop}[\cite{MR737927}]
With notation being as above, we have$$(\pi_{P})_{\ast}\bb{C}_{\tilde{\mf{g}}^{P}}\cong\left(\pi_{\ast}\bb{C}_{\tilde{\mf{g}}}\right)^{W_{P}}.$$
\end{prop}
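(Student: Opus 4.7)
The plan is to reduce the statement to an identification of intersection cohomology sheaves on $\mf{g}_{\mr{rs}}$ and then invoke the fact that the intermediate extension functor commutes with the averaging projector that defines $W_{P}$-invariants.

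The first step is to apply Lemma 2.1 to both $\pi$ and $\pi_{P}$. These are small morphisms from smooth irreducible varieties to $\mf{g}$ of the same dimension as $\mf{g}$, so the lemma gives
\[
\pi_{\ast}\bb{C}_{\tilde{\mf{g}}}[\dim\mf{g}]=IC(\mf{g}_{\mr{rs}},\mca{L}),\qquad (\pi_{P})_{\ast}\bb{C}_{\tilde{\mf{g}}^{P}}[\dim\mf{g}]=IC(\mf{g}_{\mr{rs}},\mca{L}_{P}).
\]
The $W$-action on $\mca{L}$ coming from the principal $W$-bundle $\pi_{0}$ induces a $W$-action on $IC(\mf{g}_{\mr{rs}},\mca{L})=\pi_{\ast}\bb{C}_{\tilde{\mf{g}}}[\dim\mf{g}]$ by functoriality of intermediate extension. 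Using the identification $\mca{L}_{P}=\mca{L}^{W_{P}}$ already noted in the text, the proposition reduces to showing
\[
IC(\mf{g}_{\mr{rs}},\mca{L}^{W_{P}})\cong IC(\mf{g}_{\mr{rs}},\mca{L})^{W_{P}}.
\]

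To establish this, I would use that $W_{P}$ is finite and the coefficients lie in characteristic zero, so the group algebra $\bb{C}[W_{P}]$ is semisimple and the averaging idempotent $e_{W_{P}}=|W_{P}|^{-1}\sum_{w\in W_{P}}w$ realizes $\mca{L}^{W_{P}}$ as the image of a projector acting on $\mca{L}$, hence as a direct summand in the category of local systems on $\mf{g}_{\mr{rs}}$. Since intermediate extension is an additive (in fact exact on perverse sheaves) functor, it sends this projector to the corresponding projector on $IC(\mf{g}_{\mr{rs}},\mca{L})$, which identifies $IC(\mf{g}_{\mr{rs}},\mca{L}^{W_{P}})$ with $IC(\mf{g}_{\mr{rs}},\mca{L})^{W_{P}}$. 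Combining with the two applications of Lemma 2.1 and cancelling the common shift $[\dim\mf{g}]$ yields the proposition.

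The only substantive point is verifying that intermediate extension commutes with the formation of $W_{P}$-invariants, and this rests on the semisimplicity of $\bb{C}[W_{P}]$ together with additivity of $j_{!\ast}$; everything else is bookkeeping of the ingredients already collected in the excerpt. The crucial geometric input is that $\pi^{P}$ restricts to a principal $W_{P}$-bundle over $\tilde{\mf{g}}^{P}_{\mr{rs}}$, since this is what ensures that the $W_{P}$-action seen by the averaging projector on the generic stratum matches the one on $\mca{L}_{P}$, making the identification $\mca{L}_{P}=\mca{L}^{W_{P}}$ compatible with the $W$-action built from Springer's construction.
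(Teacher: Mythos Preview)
Your proof is correct and follows the same route as the paper: apply Lemma 2.1 to both small maps to identify each pushforward with an IC sheaf, use the principal $W_{P}$-bundle over the regular semisimple locus to get $\mca{L}_{P}=\mca{L}^{W_{P}}$, and then apply intermediate extension. Your additional justification via the averaging idempotent for why $j_{!\ast}$ commutes with taking $W_{P}$-invariants is a welcome detail that the paper leaves implicit.
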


\subsection{Affine Springer fibers}\mbox{}

Let $\mca{O}=\bb{C}[[\epsilon]]$ be the ring of formal power series over $\bb{C}$. Let $F=\bb{C}(\!(\epsilon )\!)$ be the field of Laurent series over $\bb{C}$. Let $I\subset G(F)$ be the Iwahori subgroup which is defined as the inverse image of the Borel subgroup $B$ under the projection $G(\mca{O})\rightarrow G$, $\epsilon\mapsto 0$. Let $\hat{\mf{b}}$ be the Lie algebra of $I$.

Let $W_{\mr{aff}}=\Lam\rtimes W$ be the affine Weyl group of $G$. For $\lam \in \Lam$, we write $t_{\lam}=\left(\lam,e\right)\in W_{\mr{aff}}$. Here $e$ denotes the unit in $W$. We will abbreviate $w=\left(0, w\right)\in W_{\mr{aff}}$ for each $w\in W$. 

We embed $\Lam$ into $T(F)\subset G(F)$ by $\lam \mapsto \lam (\epsilon)=:\epsilon^{\lam}$, where we denote by $\lam (\epsilon)$ the image of $\epsilon $ under the map $$F^{\times}=\bb{G}_{m}(F) \rightarrow T(F)$$ induced by $\lam$. 

The affine Weyl group $W_{\mr{aff}}$ is identified with the quotient $N_{G(F)}(T(F))/T(\mca{O})$ of the normalizer of $T(F)$ in $G(F)$ by $T(\mca{O})$, and  $t_{\lam}$ is identified with $\epsilon ^{\lam}$ in this identification. 

Let $X=G(F)/G(\mca{O})$ be the affine Grassmannian and $\hat{\mca{B}}=G(F)/I$ the affine flag variety. These are known to be equipped with a structure of ind-projective ind-schemes over $\bb{C}$. 

An element $\gamma\in\mf{g}(F)$ is called nil if $\mr{Ad}(\gamma)^{N}\rightarrow 0$ as $N\rightarrow\infty$. 

Let $\gamma$ be a nil element. We denote by $$X_{\gamma}=\{g\cdot G(\mca{O})\in X\mid \mr{Ad}(g)^{-1}(\gamma)\in\mf{g}[[\epsilon]]\}$$ and $$\hat{\mca{B}}_{\gamma}=\{g\cdot I\in\hat{\mca{B}}\mid \mr{Ad}(g)^{-1}(\gamma)\in\hat{\mf{b}}\}.$$ We view $X_{\gamma}$ and $\hat{\mca{B}}_{\gamma}$ as ind-schemes over $\bb{C}$ by giving them the reduced ind-scheme structures. Both of $X_{\gamma}$ and $\hat{\mca{B}}_{\gamma}$ are called affine Springer fibers in the literature. 

We denote by $\pi:\hat{\mca{B}}_{\gamma}\rightarrow X_{\gamma}$ the natural projection. Note that the fibers of $\pi$ are classical Springer fibers.

Let $\gamma\in\mf{g}(F)$ be a regular semisimple nil element. It follows that its centralizer $Z_{G(F)}(\gamma)$ in $G(F)$ is a maximal torus. We set $\Lambda_{\gamma}=\Hom_{F}(\bb{G}_{m}, Z_{G(F)}(\gamma))$. A regular semisimple element $\gamma$ is called elliptic if it satisfies $\Lambda_{\gamma}=0$.

\begin{prop}[\cite{MR947819}]
Let $\gamma\in\mf{g}[[\epsilon]]$ be a nil element. Then $X_{\gamma}$ is finite dimensional if and only if $\gamma$ is regular semisimple.
\end{prop}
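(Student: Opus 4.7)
The plan is to prove the two implications separately; the nontrivial direction is that regular semisimple (nil) implies finite dimensional.

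For the \emph{if} direction, assume $\gamma \in \mf{g}[[\epsilon]]$ is regular semisimple and nil, and set $T' := Z_{G(F)}(\gamma)$, a maximal torus of $G(F)$ with cocharacter lattice $\Lambda_\gamma$. I would work in the Iwahori stratification $X = \bigsqcup_{w \in W_{\mr{aff}}/W} I \cdot w G(\mca{O})/G(\mca{O})$ and, stratum by stratum, analyze the intersection $X_{\gamma}\cap I\cdot wG(\mca{O})/G(\mca{O})$ by rewriting the condition $\mr{Ad}(g)^{-1}\gamma \in \mf{g}(\mca{O})$ as a system of $\epsilon$-adic valuation inequalities on the coordinates of $g$ in root subgroups. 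The lattice $\Lambda_{\gamma}$ acts on $X_{\gamma}$ permuting these strata, so it suffices to bound dimensions on a $\Lambda_{\gamma}$-fundamental domain. Because $\gamma$ is nil (adjoint-topologically nilpotent), only finitely many strata of the fundamental domain can meet $X_{\gamma}$, and on each the dimension is uniformly bounded by a root-theoretic quantity depending on $\gamma$; combining these yields the desired finite dimensionality of $X_{\gamma}$.

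For the \emph{only if} direction, I would argue by contrapositive. Suppose $\gamma$ is not regular semisimple, and consider its Jordan decomposition $\gamma = \gamma_{s}+\gamma_{n}$ in $\mf{g}(\bar F)$. If $\gamma_{n}\ne 0$, then $Z_{G(F)}(\gamma_{s})$ contains an $\mf{sl}_{2}$-triple through $\gamma_{n}$, and the resulting one-parameter unipotent subgroup of $Z_{G(F)}(\gamma)$ acts on $X_{\gamma}$ with unbounded orbits. If instead $\gamma_{n}=0$ but $\gamma$ is non-regular, then $Z_{G(F)}(\gamma)$ is a connected reductive subgroup of dimension strictly greater than $\mr{rank}(G)$, so it contains a root subgroup which again acts on $X_{\gamma}$ with unbounded orbits. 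In either case, $X_{\gamma}$ contains affine subvarieties of arbitrarily large dimension, forcing $\dim X_{\gamma}=\infty$.

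The \emph{main obstacle} is the uniform dimension bound in the \emph{if} direction. Translating $\mr{Ad}(g)^{-1}\gamma \in \mf{g}(\mca{O})$ into valuation inequalities on the coordinates of $g$, and showing that the resulting locus has finite dimension uniformly bounded across strata in a $\Lambda_{\gamma}$-fundamental domain, is the technical heart of the argument and the place where regular semisimplicity is used essentially (through a discriminant-valuation estimate of the Kazhdan--Lusztig type). The contrapositive direction is comparatively soft, requiring only the existence of a non-reductive or excess reductive piece in the centralizer.
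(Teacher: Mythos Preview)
The paper does not prove this proposition at all: it appears in the background Section~2.2 as a quoted result of Kazhdan and Lusztig \cite{MR947819}, with no argument supplied. So there is no ``paper's own proof'' to compare your proposal against.

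Your sketch is in the spirit of the original Kazhdan--Lusztig argument, and you correctly locate the substantive content in the \emph{if} direction, where the finite-dimensionality bound on a $\Lambda_{\gamma}$-fundamental domain is obtained via a discriminant-valuation estimate. That said, the proposal remains a high-level outline rather than a proof: the passage from ``$\mr{Ad}(g)^{-1}\gamma\in\mf{g}(\mca{O})$'' to a finite system of valuation inequalities with uniformly bounded solution dimension is exactly the hard step, and you have not indicated how the regular-semisimplicity hypothesis enters concretely (in Kazhdan--Lusztig it is through the valuation of the discriminant of $\gamma$ controlling the number and dimension of relevant strata). In the \emph{only if} direction your argument is essentially correct in characteristic zero, though the phrase ``unbounded orbits'' should be made precise: what you need is that the centralizer contains a positive-dimensional unipotent subgroup over $F$, whose $\mca{O}$-points already give an infinite-dimensional affine space sitting inside $X_{\gamma}$.
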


\begin{prop}[\cite{MR947819}]
Let $\gamma\in\mf{g}[[\epsilon]]$ be a regular semisimple nil element. Then $\Lambda_{\gamma}$ acts freely on $X_{\gamma}$ and $\hat{\mca{B}}_{\gamma}$, and the quotients $\Lambda_{\gamma}\backslash X_{\gamma}$ and $\Lambda_{\gamma}\backslash \hat{\mca{B}}_{\gamma}$ are projective over $\bb{C}$. In particular, if $\gamma$ is elliptic, then $X_{\gamma}$ and $\hat{\mca{B}}_{\gamma}$ are projective.
\end{prop}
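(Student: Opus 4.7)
The plan is to establish the three assertions in order: freeness, projectivity of the quotients, and the ellipticity consequence. The common underlying principle is that elements of the form $\lambda(\epsilon)$ for nonzero $\lambda\in\Lambda_{\gamma}$ have ``nontrivial valuation'' in $G(F)$, whereas the parahoric subgroups $I$ and $G(\mca{O})$ (and all their $G(F)$-conjugates) consist of bounded elements.

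For freeness, suppose $\lambda(\epsilon)\cdot gI=gI$ for some $\lambda\in\Lambda_{\gamma}$ and $gI\in\hat{\mca{B}}_{\gamma}$. Then $g^{-1}\lambda(\epsilon)g\in I$, so $\lambda(\epsilon)$ lies in the bounded subgroup $gIg^{-1}\subset G(F)$. Since $\lambda(\epsilon)$ also lies in the $F$-torus $T_{\gamma}:=Z_{G(F)}(\gamma)$, it must lie in the maximal bounded subgroup of $T_{\gamma}$. For any $F$-rational character $\chi$ of $T_{\gamma}$, one has $\chi(\lambda(\epsilon))=\epsilon^{\langle\chi,\lambda\rangle}$, which is bounded only when the pairing vanishes. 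Since $\Lambda_{\gamma}$ is exactly the cocharacter group of the maximal $F$-split subtorus of $T_{\gamma}$, this pairing is perfect on $\Lambda_{\gamma}$ and forces $\lambda=0$. The same reasoning, with $I$ replaced by $G(\mca{O})$, gives freeness on $X_{\gamma}$.

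For projectivity of the quotients, the structural input is Proposition 2.3: $X_{\gamma}$ is finite-dimensional and closed in the ind-projective ind-scheme $X$. The affine Grassmannian carries the $G(\mca{O})$-orbit stratification indexed by dominant coweights, with each closure a finite-dimensional projective subvariety; intersecting with $X_{\gamma}$ decomposes it into locally closed pieces, each contained in a projective subvariety. The $\Lambda_{\gamma}$-action translates these strata in a manner controlled by the inclusion of $\Lambda_{\gamma}$ into the translation lattice of $W_{\mr{aff}}$. I would show, using finite-dimensionality of $X_{\gamma}$ together with the cocompactness of $\Lambda_{\gamma}$ inside $T_{\gamma}(F)$ modulo its bounded subgroup, that only finitely many $\Lambda_{\gamma}$-orbits of strata meet $X_{\gamma}$. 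Choosing representatives then produces a finite union $Y$ of projective subvarieties of $X_{\gamma}$ satisfying $\Lambda_{\gamma}\cdot Y=X_{\gamma}$; since the action is free and properly discontinuous, the quotient $\Lambda_{\gamma}\backslash X_{\gamma}$ is the image of the projective $Y$ under the étale quotient map, hence projective. The argument for $\hat{\mca{B}}_{\gamma}$ is parallel, using the Iwahori stratification of $\hat{\mca{B}}$. The ellipticity statement is then immediate: when $\Lambda_{\gamma}=0$, the quotients equal $X_{\gamma}$ and $\hat{\mca{B}}_{\gamma}$ themselves.

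The main obstacle is the finiteness of $\Lambda_{\gamma}$-orbits of strata meeting $X_{\gamma}$ in the projectivity step. A rigorous treatment requires identifying $\Lambda_{\gamma}$ explicitly as a sublattice of the translation part of $W_{\mr{aff}}$ and controlling how the defining condition $\mr{Ad}(g)^{-1}\gamma\in\mf{g}[[\epsilon]]$ interacts with the $G(\mca{O})$-orbits; this analysis is the technical heart of the Kazhdan--Lusztig argument, and everything else in the proposition reduces to it together with the valuation argument used for freeness.
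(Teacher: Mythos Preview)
The paper does not prove this proposition at all; it is stated with a citation to Kazhdan--Lusztig \cite{MR947819} and no argument is given. There is therefore nothing in the paper to compare your proposal against.

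Regarding the proposal on its own merits: your freeness argument is sound and is essentially the standard one. For projectivity you have correctly isolated the genuine difficulty --- that only finitely many $\Lambda_{\gamma}$-orbits of strata meet $X_{\gamma}$ --- and you openly defer this to the Kazhdan--Lusztig analysis, so your write-up is more of a roadmap than a proof. One point that would need tightening: the passage from ``$\Lambda_{\gamma}\cdot Y=X_{\gamma}$ with $Y$ projective and the action free'' to ``$\Lambda_{\gamma}\backslash X_{\gamma}$ is projective'' is not automatic. Freeness of a discrete group action does not by itself guarantee that the quotient is a scheme, let alone projective; you need properness of the action (equivalently, that the map $\Lambda_{\gamma}\times X_{\gamma}\to X_{\gamma}\times X_{\gamma}$ is proper), and the image of a projective variety under an \'etale map need not be projective. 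In the actual Kazhdan--Lusztig argument one rather constructs a locally closed fundamental domain whose closure is projective and then argues more carefully, so this step deserves more than the single sentence you give it.
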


\begin{exa}
Let $G=SL_{2}$ and $\gamma =\epsilon^{2}e_{-\alpha}+\epsilon e_{\alpha}$ where $\alpha$ is the only positive simple root. Then $\gamma$ is a regular semisimple nil elliptic element. In this case, $X_{\gamma}$ turns out to be isomorphic to a projective line and $\hat{\mca{B}}_{\gamma}$ is isomorphic to two projective lines intersecting transversally at a single point. The natural projection $\pi:\hat{\mca{B}}_{\gamma}\rightarrow X_{\gamma}$ is a morphism which maps isomorphically on one projective line and maps the other projective line into a point.
\end{exa}

Now we construct an action of affine Weyl group $W_{\mr{aff}}$ on $H^{\mr{BM}}_{\ast}(\hat{\mca{B}}_{\gamma})$. Let $\{\alpha_{i}\}_{i\in I_{\mr{aff}}}$ be the set of affine simple roots. 

For $i\in I_{\mr{aff}}$, let $\hat{\mf{b}}^{i}=\bb{C}e_{-\alpha_{i}}+\hat{\mf{b}}\subset\mf{g}(F)$ be a parahoric subalgebra. Given a subset $\emptyset\ne J\subsetneq I_{\mr{aff}}$, we denote by $\hat{\mf{b}}^{J}$ the Lie subalgebra of $\mf{g}(F)$ generated by $\sum_{i\in J}\hat{\mf{b}}^{i}$. We also denote $\hat{\mf{b}}$ by $\hat{\mf{b}}^{\emptyset}$. 

For $J\subsetneq I_{\mr{aff}}$, let $\hat{B}^{J}=\{g\in G(F)\mid \mr{Ad}(g)(\hat{\mf{b}}^{J})=\hat{\mf{b}}^{J}\}$. This is a proalgebraic group over $\bb{C}$ whose prounipotent radical is denoted by $\hat{U}^{J}$. The quotient $\bar{G}^{J}=\hat{B}^{J}/\hat{U}^{J}$ is a connected reductive algebraic group over $\bb{C}$. Let $\hat{\mf{n}}^{J}$ and $\bar{\mf{g}}^{J}$ be the Lie algebras of $\hat{U}^{J}$ and $\bar{G}^{J}$ respectively. Let $\hat{\mca{B}}^{J}=G(F)/\hat{B}^{J}$ and $\pi^{J}:\hat{\mca{B}}\rightarrow\hat{\mca{B}}^{J}$ be the natural projections.

For any $l\geq 1$, let $$\hat{\mca{B}}^{J}_{\gamma}(l)=\{g\cdot\hat{B}^{J}\in\hat{\mca{B}}^{J}\mid \gamma\in\mr{Ad}(g)\hat{\mf{b}}^{J}\mbox{ and }\epsilon^{l}\hat{\mf{n}}^{\emptyset}\subset\mr{Ad}(g)\hat{\mf{b}}^{J}\}.$$ This is a projective algebraic variety over $\bb{C}$. 

We have a principal $\bar{G}^{J}$-bundle $E\rightarrow\hat{\mca{B}}^{J}_{\gamma}(l)$, where $$E=\{g\cdot\hat{U}^{J}\in G(F)/\hat{U}^{J}\mid g\cdot\hat{B}^{J}\in\hat{\mca{B}}^{J}_{\gamma}(l)\},$$ and the map $E\rightarrow\hat{\mca{B}}^{J}_{\gamma}(l)$ is $g\cdot\hat{U}^{J}\mapsto g\cdot\hat{B}^{J}$.

The Lie algebra bundle $\bar{G}^{J}\backslash(E\times\bar{\mf{g}}^{J})$ associated to $E\rightarrow\hat{\mca{B}}^{J}_{\gamma}(l)$ is the bundle whose fiber at $\hat{\mf{p}}=g\cdot\hat{B}^{J}$ is the Lie algebra $\bar{\mf{p}}=\mr{Ad}(g)\hat{\mf{b}}^{J}/\mr{Ad}(g)\hat{\mf{n}}^{J}$. This bundle has a natural section $j$ given by associating to any $\hat{\mf{p}}\in\hat{\mca{B}}^{J}_{\gamma}(l)$ the image of $\gamma$ under the natural projection $\mr{Ad}(g)\hat{\mf{b}}^{J}\rightarrow\bar{\mf{p}}$.

Let $$\hat{\mca{B}}_{\gamma}(l)=\{g\cdot I\in\hat{\mca{B}_{\gamma}}\mid \epsilon^{l}\hat{\mf{n}}^{\emptyset}\subset\mr{Ad}(g)\hat{\mf{b}}^{J}\}.$$ Then we have the following commutative diagram with each square being cartesian:

$$
\begin{CD}
\hat{\mca{B}}_{\gamma}(l) @>>> \bar{G}^{J}\backslash(E\times\widetilde{\bar{\mf{g}}^{J}})
@<<< E\times\widetilde{\bar{\mf{g}}^{J}} @>>>\widetilde{\bar{\mf{g}}^{J}} \\
@V\pi^{J}VV @V\pi'_{J}VV @V1\times\pi_{J}VV @V\pi_{J}VV \\
\hat{\mca{B}}^{J}_{\gamma}(l) @>j>> \bar{G}^{J}\backslash(E\times\bar{\mf{g}}^{J}) 
@<u<< E\times\bar{\mf{g}}^{J} @>q>> \bar{\mf{g}}^{J}.
\end{CD}
$$

Here, $\pi_{J}:\widetilde{\bar{\mf{g}}^{J}}\rightarrow\bar{\mf{g}}^{J}$ is the Grothendieck alteration for $\bar{G}^{J}$, $\pi'_{J}$ and $1\times\pi_{J}$ are the morphisms naturally induced from $\pi_{J}$, $q$ is the natural projection, and $u$ is the natural quotient map. 

The Weyl group of $\bar{G}^{J}$ is naturally identified with the subgroup $W_{J}$ of $W_{\mr{aff}}$ generated by $\{s_{i}\mid i\in J\}$, where $s_{i}$ is the simple reflection corresponding to $i$. Hence by the previous section, we have an action of $W_{J}$ on $\mca{L}_{J}=(\pi_{J})_{!}\bb{C}_{\widetilde{\bar{\mf{g}}^{J}}}$. Moreover, this action is compatible with the $\bar{G}^{J}$-equivariant structure. 

By the proper base change theorem, we have $$u^{\ast}(\pi'_{J})_{!}\bb{C}_{\bar{G}^{J}\backslash(E\times\widetilde{\bar{\mf{g}}^{J}})}\simeq (1\times\pi_{J})_{!}\bb{C}_{E\times\widetilde{\bar{\mf{g}}^{J}}}\simeq q^{\ast}\mca{L}_{J}$$ and $$\pi^{J}_{!}\bb{C}_{\hat{\mca{B}}_{\gamma}(l)}\simeq j^{\ast}(\pi'_{J})_{!}\bb{C}_{\bar{G}^{J}\backslash(E\times\widetilde{\bar{\mf{g}}^{J}})}.$$ 

Therefore, $W_{J}$ acts on $u^{\ast}(\pi'_{J})_{!}\bb{C}_{\bar{G}^{J}\backslash(E\times\widetilde{\bar{\mf{g}}^{J}})}$. Since this action is compatible with $\bar{G}^{J}$-equivariant structure, $W_{J}$ also acts on $(\pi'_{J})_{!}\bb{C}_{\bar{G}^{J}\backslash(E\times\widetilde{\bar{\mf{g}}^{J}})}$. Hence $W_{J}$ acts on $\pi^{J}_{!}\bb{C}_{\hat{\mca{B}}_{\gamma}(l)}$. By taking compact support cohomology, we get a $W_{J}$-action on $H^{\ast}_{c}(\hat{\mca{B}}_{\gamma}(l))$ and hence on $H^{\mr{BM}}_{\ast}(\hat{\mca{B}}_{\gamma}(l))$ by duality. 

We can see that the closed embedding $\hat{\mca{B}}_{\gamma}(l)\hookrightarrow\hat{\mca{B}}_{\gamma}(l+1)$ induces a map $H^{\mr{BM}}_{\ast}(\hat{\mca{B}}_{\gamma}(l))\rightarrow H^{\mr{BM}}_{\ast}(\hat{\mca{B}}_{\gamma}(l+1))$ which is compatible with the $W_{J}$-actions. Hence we get a $W_{J}$-action on the direct limit $$\varinjlim_{l}H^{\mr{BM}}_{\ast}(\hat{\mca{B}}_{\gamma}(l))=H^{\mr{BM}}_{\ast}(\hat{\mca{B}}_{\gamma}).$$

Now let $J\subset J'$ be two subsets of $I_{\mr{aff}}$ distinct from $I_{\mr{aff}}$. Then $W_{J}\subset W_{J'}$ and the $W_{J}$-action on $H^{\mr{BM}}_{\ast}(\hat{\mca{B}}_{\gamma})$ is the restriction of the analogous $W_{J'}$-action on $H^{\mr{BM}}_{\ast}(\hat{\mca{B}}_{\gamma})$. Since the Coxeter relations involve at most two simple reflections, we obtain a $W_{\mr{aff}}$-action on $H^{\mr{BM}}_{\ast}(\hat{\mca{B}}_{\gamma})$. Moreover, by Proposition 2.2, we have the following proposition.

\begin{prop}
We have $$\left(\pi^{J}_{!}\bb{C}_{\hat{\mca{B}}_{\gamma}}\right)^{W_{J}}=\bb{C}_{\hat{\mca{B}}^{J}_{\gamma}}.$$
\end{prop}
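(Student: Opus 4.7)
The plan is to reduce the statement to its finite-dimensional counterpart (Proposition 2.2) applied to the reductive quotient $\bar{G}^{J}$, by exploiting the cartesian diagram and the principal bundle structure set up just before the proposition. I would proceed in the following order.

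First, I would work at each finite level $\hat{\mca{B}}_{\gamma}(l)$ and prove the identity $(\pi^{J}_{!}\bb{C}_{\hat{\mca{B}}_{\gamma}(l)})^{W_{J}} \cong \bb{C}_{\hat{\mca{B}}^{J}_{\gamma}(l)}$. Since the closed embeddings $\hat{\mca{B}}_{\gamma}(l)\hookrightarrow\hat{\mca{B}}_{\gamma}(l+1)$ are compatible with the $\pi^{J}$'s and with the $W_{J}$-actions already defined, and since taking $W_{J}$-invariants of a finite group action over $\bb{C}$ commutes with direct limits at the level of cohomology, the statement for $\hat{\mca{B}}_{\gamma}$ will follow from the statement at each level.

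Next, I would analyze $\pi^{J}_{!}\bb{C}_{\hat{\mca{B}}_{\gamma}(l)}$ via the cartesian diagram. By proper base change, one has $\pi^{J}_{!}\bb{C}_{\hat{\mca{B}}_{\gamma}(l)} \simeq j^{\ast}(\pi'_{J})_{!}\bb{C}$, as recalled in the excerpt. The $W_{J}$-action on the right-hand side is obtained by descending the $\bar{G}^{J}$-equivariant $W_{J}$-action on $\mca{L}_{J} = (\pi_{J})_{!}\bb{C}_{\widetilde{\bar{\mf{g}}^{J}}}$ along the principal $\bar{G}^{J}$-bundle $u$, using the isomorphism $u^{\ast}(\pi'_{J})_{!}\bb{C} \simeq q^{\ast}\mca{L}_{J}$. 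Hence taking $W_{J}$-invariants commutes with these pullbacks, and it suffices to compute $(\mca{L}_{J})^{W_{J}}$.

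Now I would apply Proposition 2.2 to the connected reductive group $\bar{G}^{J}$, taking the parabolic to be $\bar{G}^{J}$ itself. In that extreme case $\tilde{\bar{\mf{g}}^{J}}^{\bar{G}^{J}} = \bar{\mf{g}}^{J}$ and the associated pushforward is just $\bb{C}_{\bar{\mf{g}}^{J}}$, so Proposition 2.2 yields $(\mca{L}_{J})^{W_{J}} = \bb{C}_{\bar{\mf{g}}^{J}}$. Descending through the faithfully flat quotient $u$ gives $((\pi'_{J})_{!}\bb{C})^{W_{J}} = \bb{C}_{\bar{G}^{J}\backslash(E\times\bar{\mf{g}}^{J})}$, and then pulling back by the section $j$ yields $(\pi^{J}_{!}\bb{C}_{\hat{\mca{B}}_{\gamma}(l)})^{W_{J}} = \bb{C}_{\hat{\mca{B}}^{J}_{\gamma}(l)}$, as required.

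The main technical obstacle is verifying that the $W_{J}$-action produced in the excerpt (which is constructed by pullback along the smooth atlas $u$ and then descended using $\bar{G}^{J}$-equivariance) coincides, after taking invariants, with a genuine $W_{J}$-action on $(\pi'_{J})_{!}\bb{C}$ whose invariants can be identified on the nose with the constant sheaf on $\bar{G}^{J}\backslash(E\times\bar{\mf{g}}^{J})$. This is essentially bookkeeping with equivariant derived categories and proper base change, but it has to be done carefully so that the identification of $(\mca{L}_{J})^{W_{J}}$ with $\bb{C}_{\bar{\mf{g}}^{J}}$ transports, without ambiguity, to the identification claimed in the proposition.
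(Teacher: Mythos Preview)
Your proposal is correct and follows exactly the approach the paper intends: the paper offers no written proof beyond the phrase ``by Proposition 2.2,'' and your argument is precisely the unpacking of that citation via the cartesian diagram, proper base change, and descent along the principal $\bar{G}^{J}$-bundle $u$. The only difference is level of detail; the paper leaves the bookkeeping you describe in your final paragraph entirely to the reader.
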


Let $Y\subset\hat{\mca{B}}^{J}_{\gamma}$ be a locally closed subvariety. Then Proposition 2.6 implies that we have a $W_{J}$-action on $H^{\mr{BM}}_{\ast}((\pi^{J})^{-1}(Y))$ satisfying $H^{\mr{BM}}_{\ast}((\pi^{J})^{-1}(Y))^{W_{J}}\simeq H^{\mr{BM}}_{\ast}(Y)$.

\subsection{Pavings of equivalued affine Springer fibers}\mbox{}

Let $\hat{\mf{g}}=\mf{g}\otimes_{\bb{C}}F$. For $r\in\bb{R}$ and $x\in X_{\ast}(T)\otimes_{\bb{Z}}\bb{R}$, we define a subspace $\hat{\mf{g}}_{x}(r)$ of $\hat{\mf{g}}$ by $$\hat{\mf{g}}_{x}(r)=\bigoplus_{\alpha(x)+m=r}\mf{g}_{\alpha}\epsilon^{m}.$$ 

We set $\hat{\mf{g}}_{x,r}:=\prod_{r'\geq r}\hat{\mf{g}}_{x}(r')\subset \hat{\mf{g}}$ and $\hat{\mf{g}}_{x,r+}:=\prod_{r'>r}\hat{\mf{g}}_{x}(r')$. For $v\in\hat{\mf{g}}_{x,r}$, we denote by $\bar{v}$ the image under the composition of the natural projection $\hat{\mf{g}}_{x,r}\rightarrow \hat{\mf{g}}_{x}(r)$ and the map $\hat{\mf{g}}_{x}(r)\rightarrow \mf{g}$ sending $\epsilon^{m}v'$ to $v'$ for $v'\in \mf{g}$.

We have a connected pro-algebraic subgroup $\hat{G}_{x,r}$ of $G(F)$ with its Lie algebra $\hat{\mf{g}}_{x,r}$ (\cite{MR1253198}). We abbreviate $\hat{G}_{x,0}$ by $\hat{G}_{x}$. Then $\hat{G}_{x}$ is a parahoric subgroup of $G(F)$. Note that the adjoint action of $\hat{G}_{x}$ preserves each subspace $\hat{\mf{g}}_{x,r}$. Let $\hat{G}_{x,0+}$ be the prounipotent radical of $\hat{G}_{x}$. 

For $y\in X_{\ast}(T)\otimes_{\bb{Z}}\bb{R}$, and $v\in\hat{\mf{g}}$, we define $\mca{F}_{y}(v)\subset\mca{F}_{y}:=G(F)/\hat{G}_{y}$ by $$\mca{F}_{y}(v)=\{g\cdot\hat{G}_{y}\in\mca{F}_{y}\mid g^{-1}v\in \hat{\mf{g}}_{y,0}\}.$$ 

We take $x,y\in X_{\ast}(T)\otimes_{\bb{Z}}\bb{R}$ and $s\in\bb{R}$ such that the following three assumptions hold: 

\begin{enumerate}
\item $s\geq 0$;
\item $v\in \hat{\mf{g}}_{x,s}$; and
\item $\bar{v}\in \mf{g}$ is regular semisimple.
\end{enumerate}

We consider the intersections of $\mca{F}_{y}(v)$ and $\hat{G}_{x}$-orbits in $\mca{F}_{y}$. Let $c=\epsilon^{\lam}w$ be a representative of $t_{\lam}w\in W_{\mr{aff}}$ in $N_{G(F)}(T(F))$. Note that $$c\cdot\hat{\mf{g}}_{y,0}=\hat{\mf{g}}_{c\cdot y,0},$$ where $c\cdot y=w\cdot y-\lam$. 

Right multiplication by $c$ induces an isomorphism from $(\hat{G}_{x}\cdot\hat{G}_{c\cdot y}/\hat{G}_{c\cdot y})\cap\mca{F}_{c\cdot y}(v)$ to $$S:=\hat{G}_{x}\cdot c\cdot \hat{G}_{y}/\hat{G}_{y}\cap\mca{F}_{y}(v).$$ 

We set $$\tilde{S}_{0+}:=\{g\cdot(\hat{G}_{x}\cap\hat{G}_{c\cdot y})\in\hat{G}_{x}/(\hat{G}_{x}\cap\hat{G}_{c\cdot y})\mid g^{-1}v\in\hat{\mf{g}}_{c\cdot y,0}+\hat{\mf{g}}_{x,s+}\}.$$ Since $\tilde{S}_{0+}$ is invariant under left multiplication of $\hat{G}_{x,0+}$, we can define the quotient space $$S_{0+}:=\hat{G}_{x,0+}\backslash\tilde{S}_{0+}.$$ In \cite{MR2209851}, it is shown that $S_{0+}$ is isomorphic to a variety called Hessenberg variety.

\begin{thm}[\cite{MR2209851}]
With notation and assumptions being as above, $S_{0+}$ is smooth and $S$ has a structure of iterated affine space bundle over $S_{0+}$. Moreover, if $S_{0+}$ is not empty, the dimension of $S$ is 

\begin{equation*}
\dim(S)=\#\{(\alpha, k)\in\Delta\times\bb{Z}\mid 0\leq \langle x,\alpha\rangle+k<s\mbox{ and }\langle c\cdot y,\alpha\rangle+k<0\}.
\end{equation*}
\end{thm}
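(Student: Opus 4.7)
The plan is to follow the Moy-Prasad filtration strategy of \cite{MR2209851}. First, orbit-stabilizer for the $\hat G_x$-action on $\mca F_y$ at the coset $c\hat G_y$ identifies
$$\hat G_x\cdot c\hat G_y/\hat G_y \cong \hat G_x/(\hat G_x\cap \hat G_{c\cdot y}),$$
since the stabilizer is $\hat G_x\cap c\hat G_y c^{-1}=\hat G_x\cap\hat G_{c\cdot y}$. Under this identification, the constraint $(gc)^{-1}v\in\hat{\mf g}_{y,0}$ translates to $g^{-1}v\in c\hat{\mf g}_{y,0}=\hat{\mf g}_{c\cdot y,0}$, so
$$S=\{g(\hat G_x\cap \hat G_{c\cdot y})\mid g^{-1}v\in\hat{\mf g}_{c\cdot y,0}\},$$
while $\tilde S_{0+}$ simply replaces $\hat{\mf g}_{c\cdot y,0}$ by the larger subspace $\hat{\mf g}_{c\cdot y,0}+\hat{\mf g}_{x,s+}$.

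Next, I would identify $S_{0+}$ with a Hessenberg variety. Passing to the reductive quotient $\bar G^{x}=\hat G_x/\hat G_{x,0+}$ with Lie algebra $\hat{\mf g}_x(0)$ and the parabolic $\bar P=(\hat G_x\cap\hat G_{c\cdot y})\hat G_{x,0+}/\hat G_{x,0+}$, the natural map $\hat G_x/(\hat G_x\cap\hat G_{c\cdot y})\twoheadrightarrow\bar G^x/\bar P$ is an iterated affine-space fibration by pro-unipotent radicals whose fibers are absorbed by the $\hat G_{x,0+}$-quotient defining $S_{0+}$. The defining inclusion $g^{-1}v\in\hat{\mf g}_{c\cdot y,0}+\hat{\mf g}_{x,s+}$ descends modulo $\hat{\mf g}_{x,s+}$ to $\bar g^{-1}\bar v\in H$, where $H\subset\hat{\mf g}_x(0)$ is the $\bar P$-invariant image of $\hat{\mf g}_{c\cdot y,0}\cap\hat{\mf g}_{x,0}$; hence $S_{0+}$ is a Hessenberg subvariety of $\bar G^x/\bar P$ associated to the regular semisimple element $\bar v\in\mf g$, which is smooth by the standard smoothness theorem for regular-semisimple Hessenberg varieties.

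Finally, I would build $S$ over $S_{0+}$ as an iterated affine bundle. Let $0=t_0<t_1<\cdots<t_N<s$ enumerate the Moy-Prasad jump levels of $\hat{\mf g}_{x,\cdot}$ in $[0,s)$ and set
$$S_j:=\hat G_{x,t_j+}\backslash\{g(\hat G_x\cap\hat G_{c\cdot y})\mid g^{-1}v\in\hat{\mf g}_{c\cdot y,0}+\hat{\mf g}_{x,(t_j+s)+}\},$$
so that $S_0=S_{0+}$ and $S_N=S$. I would show each forgetful map $S_{j+1}\to S_j$ is a Zariski-locally trivial affine-space bundle by exhibiting the fiber: for $\bar Y\in\hat G_{x,t_j+}/\hat G_{x,t_{j+1}+}\cong\hat{\mf g}_x(t_j)$, the first-order change of $g^{-1}v$ modulo $\hat{\mf g}_{x,(t_j+s)+}$ is $-\mr{ad}(\bar v)\bar Y\in\hat{\mf g}_x(t_j+s)$, and the regular semisimplicity of $\bar v$ ensures $\mr{ad}(\bar v)$ has graded pieces of constant rank with transverse kernel/image, yielding affine fibers of dimension $\#\{(\alpha,k)\mid\langle x,\alpha\rangle+k=t_j,\ \langle c\cdot y,\alpha\rangle+k<0\}$. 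Summing these fiber dimensions over all $t_j\in[0,s)$ (with the $t_0=0$ contribution coming from $\dim S_{0+}$) yields the stated formula. The main obstacle is this last step: one must simultaneously control the two Moy-Prasad filtrations at $x$ and $c\cdot y$ and exploit the regular semisimplicity of $\bar v$ to obtain honest affine fibers rather than merely smooth morphisms.
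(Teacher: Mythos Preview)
The paper does not prove this theorem at all: it is quoted verbatim from Goresky--Kottwitz--MacPherson \cite{MR2209851} as a background result (this is what the bracketed citation in the theorem header signals), and is then applied in Section~4 as a black box. So there is no proof in the paper to compare your proposal against.

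That said, your sketch is a reasonable outline of the Moy--Prasad filtration argument from \cite{MR2209851}. One slip worth flagging: in your second paragraph you write $H\subset\hat{\mf g}_x(0)$, but since $v\in\hat{\mf g}_{x,s}$ and $\hat G_x$ preserves the Moy--Prasad filtration, the image of $g^{-1}v$ modulo $\hat{\mf g}_{x,s+}$ lies in $\hat{\mf g}_x(s)$, not $\hat{\mf g}_x(0)$. The Hessenberg subspace $H$ is the image of $\hat{\mf g}_{c\cdot y,0}\cap\hat{\mf g}_{x,s}$ inside $\hat{\mf g}_x(s)$, on which $\bar G^x$ acts via the graded adjoint action. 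Also, in your third paragraph the endpoint $S_N=S$ does not drop out of the definition you wrote: with $t_N<s$ the condition $g^{-1}v\in\hat{\mf g}_{c\cdot y,0}+\hat{\mf g}_{x,(t_N+s)+}$ is still weaker than $g^{-1}v\in\hat{\mf g}_{c\cdot y,0}$, so one needs either a limiting argument or a separate step to pass from the finite tower to $S$ itself. These are both fixable, and the overall strategy is the right one, but since the present paper simply cites the result you would be better served by consulting \cite{MR2209851} directly for the precise bookkeeping.
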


In case of type $A$, the Hessenberg variety $S_{0+}$ is a point whenever it is nonempty. We remark that the above description of the paving only depends on $\bar{v}$ and not on $v$. 

\section{Combinatorics}

\subsection{Notation and preliminaries}\mbox{}

We mainly follow the notation of \cite{MR2115257}.

We present a partition by the sequence $\la =(\la_{1},\ldots ,\la_{l}), \la_{1}\geq \la_{2}\geq \cdots \geq \la_{l}>0$, and denote its size by $|\la|=\sum_{i}\la_{i}$. If $|\la |=n$, we write $\la \vdash n$. It is understood that $\la_{i}=0$ for $i>l$. We may also write $\la =(1^{m_{1}},2^{m_{2}},\cdots)$ to indicate the partition with its $m_{i}$ parts equal to $i$. The conjugate partition of $\la$ is denoted by $\la '$ which is defined by $\la'_{i}=\sum_{j\geq i}m_{j}$. 

The Young diagram of $\la$ is the set $\{(i,j)\in \bb{N}\times \bb{N}\mid 0\leq j<\la_{i+1}\}$. One pictures elements $(i,j)\in \bb{N}\times \bb{N}$ as boxes arranged with the $i$-axis vertical and the $j$-axis horizontal. By abuse of notation, we usually write $\la$ both for a partition and its Young diagram. A skew Young diagram $\la/\mu$ is the difference of Young diagrams $\la$ and $\mu$ satisfying $\mu \subseteq \la$. For two partitions $\la$ and $\mu =(\mu_{1},\mu_{2},\cdots )$, we write $\la +\mu:=(\la_{1}+\mu_{1},\la_{2}+\mu_{2},\cdots )$. We also write  $\la -\mu:=(\la_{1}-\mu_{1},\la_{2}-\mu_{2},\cdots )$ for $\mu$ not necessarily being a partition. Note that $\la -\mu$ need not be a partition.

A semistandard Young tableau of (skew) shape $\la$ is a function $T$ from the diagram of $\la$ to the ordered alphabets $\mca{A}_{+}=\{1<2<\cdots \}$, which is weakly increasing on each row of $\la$ and strictly increasing on each column. A semistandard tableau is standard if it is a bijection from $\la$ to $\{1,2,\ldots ,|\la|\}$. We also consider tableaux with negative alphabets $\mca{A}_{-}=\{\bar{1}<\bar{2}<\cdots \}$. We call such a tableau negative Young tableau. A negative Young tableau is called semistandard if it is weakly increasing on each column and strictly increasing on each row. We denote
\begin{align*}
\mr{SSYT}(\la )&=\{\mbox{semistandard tableaux }T:\la\rightarrow\mca{A}_{+}\},\\
\mr{SSYT}_{-}(\la)&=\{\mbox{negative semistandard tableaux }T:\la\rightarrow\mca{A}_{-}\},\\
\mr{SSYT}(\la,\mu)&=\{\mbox{semistandard tableau }T:\la\rightarrow \mca{A}_{+}\mbox{ with entries }1^{\mu_{1}},2^{\mu_{2}},\cdots \},\\
\mr{SSYT}_{-}(\la,\mu)&=\{\mbox{negative semistandard tableau }T:\la\rightarrow\mca{A}_{-}\mbox{ with entries }\bar{1}^{\mu_{1}},\bar{2}^{\mu_{2}},\cdots\},\\
\mr{SYT}(\la )&=\{\mbox{standard tableaux }T:\la \rightarrow \{ 1,\ldots,|\la|\}\},\\
\mr{SYT}_{-}(\la )&=\{\mbox{negative standard tableaux }T:\la \rightarrow \{\bar{1},\ldots,\overline{|\la|}\}\}.
\end{align*}

We write $e_{\la}$ for the elementary symmetric functions, $h_{\la}$ for the complete symmetric functions, $m_{\la}$ for the monomial symmetric functions, $s_{\la}$ for the Schur functions. We take these in variables $z=z_{1},z_{2},\cdots$ or $w=w_{1},w_{2},\cdots$. We adopt the convention that $z_{\bar{a}}$ stands for $w_{a}$ for every negative letter $\bar{a}\in \mca{A}_{-}$. We denote the ring of symmetric polynomials with coefficients in $\bb{Q}$ by $\mr{Sym}$.

We write $\langle -,-\rangle$ for the Hall inner product on $\mr{Sym}$ defined by either of the identities $$\langle h_{\la},m_{\mu}\rangle = \delta _{\la \mu}=\langle s_{\la},s_{\mu}\rangle .$$

We denote by $\omega$ the involution on $\mr{Sym}$ defined by any of the identities
\begin{eqnarray*}
\omega e_{\la}=h_{\la} & \omega h_{\la}=e_{\la} & \omega s_{\la}=s_{\la'}.
\end{eqnarray*}

If $T$ is a (negative) semistandard tableau of (skew) shape $\la$, we set $$z^{T}=\prod_{x\in \la}z_{T(x)}.$$ Note that if $T$ is negative, $z^{T}$ is a monomial of variable $w$ by convention.

Given any subset $D\subset\{1,\ldots,n-1\}$, Gessel's quasi-symmetric function is defined by the formula $$\mca{Q}_{n,D}(z)=\sum_{\substack{a_{1}\leq a_{2}\leq\cdots\leq a_{n} \\ a_{i}=a_{i+1}\Rightarrow i\notin D}}z_{a_{1}}z_{a_{2}}\cdots z_{a_{n}}.$$ Here the indices $\{a_{i}\}$ belong to $\mca{A}_{+}$. We also define a negative version of quasi-symmetric functions $$\tilde{\mca{Q}}_{n,D}(w)=\sum_{\substack{a_{1}\leq a_{2}\leq\cdots\leq a_{n} \\ a_{i}=a_{i+1}\Rightarrow i\in D}}w_{a_{1}}w_{a_{2}}\cdots w_{a_{n}}.$$ 

\begin{lem}[\cite{MR2115257}, Corollary 2.4.3]
Let $f(z)$ be any symmetric function which is homogeneous of degree $n$. Assume that $f(z)$ is written in terms of quasi-symmetric functions as $$f(z)=\sum_{D}c_{D}\mca{Q}_{n,D}(z).$$ Then $\omega f(w)$ is given by $$\omega f(w)=\sum_{D}c_{D}\tilde{\mca{Q}}_{n,D}(w).$$
\end{lem}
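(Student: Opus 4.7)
The plan is to separate the statement into two parts. The first is the tautological identity $\tilde{\mca{Q}}_{n,D}(w)=\mca{Q}_{n,D^{c}}(w)$, where $D^{c}=\{1,\ldots,n-1\}\setminus D$: comparing the two defining sums, the restriction ``$a_{i}=a_{i+1}\Rightarrow i\in D$'' is literally ``$a_{i}=a_{i+1}\Rightarrow i\notin D^{c}$''. Thus the lemma becomes, after substituting $w$ for $z$, the statement that for any symmetric $f(z)=\sum_{D}c_{D}\mca{Q}_{n,D}(z)$ of degree $n$ one has $\omega f(z)=\sum_{D}c_{D}\mca{Q}_{n,D^{c}}(z)$.

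For this I would use that the fundamental quasi-symmetric functions $\{\mca{Q}_{n,D}\}_{D\subseteq\{1,\ldots,n-1\}}$ are linearly independent, so the coefficients $c_{D}$ are uniquely determined by $f$, and by linearity it suffices to verify the identity when $f=s_{\la}$ for $\la\vdash n$. Here the standard Gessel expansion reads
$$s_{\la}(z)=\sum_{T\in\mr{SYT}(\la)}\mca{Q}_{n,D(T)}(z),$$
where $D(T)\subseteq\{1,\ldots,n-1\}$ is the descent set of $T$, namely the set of $i$ such that $i+1$ appears strictly below $i$ in $T$. Applying the same expansion to $s_{\la'}=\omega s_{\la}$ gives $\omega s_{\la}(z)=\sum_{S\in\mr{SYT}(\la')}\mca{Q}_{n,D(S)}(z)$.

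The remaining step is to invoke the transpose bijection $\mr{SYT}(\la)\to\mr{SYT}(\la')$, $T\mapsto T^{t}$, and check the descent-complementation $D(T^{t})=D(T)^{c}$. Substituting $S=T^{t}$ then yields $\omega s_{\la}(z)=\sum_{T\in\mr{SYT}(\la)}\mca{Q}_{n,D(T)^{c}}(z)$, which matches the desired formula for $\omega s_{\la}$ term by term with the Gessel expansion of $s_{\la}$. Linearity extends the identity to all symmetric $f$ of degree $n$, and substituting $w$ for $z$ and applying the first observation yields the lemma as stated.

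The only real content of the argument is the descent-complementation $D(T^{t})=D(T)^{c}$, which is a short case analysis: for consecutive entries $i, i+1$ in a standard Young tableau, the strict row and column increase rules out $i+1$ lying strictly to the lower-right of $i$, so exactly one of ``$i+1$ lies (weakly) below $i$ in the same or a smaller column'' and ``$i+1$ lies (weakly) to the right of $i$ in the same or a smaller row'' occurs. Since transposition swaps rows and columns, these two conditions are exchanged, which is precisely the assertion $D(T^{t})=D(T)^{c}$. This is the whole difficulty, and it is routine.
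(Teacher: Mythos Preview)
The paper does not supply its own proof of this lemma; it is quoted verbatim as Corollary~2.4.3 of \cite{MR2115257} and used as a black box. Your argument is correct and is essentially the standard one: reduce via $\tilde{\mca{Q}}_{n,D}=\mca{Q}_{n,D^{c}}$, check the claim on the Schur basis using Gessel's expansion $s_{\la}=\sum_{T\in\mr{SYT}(\la)}\mca{Q}_{n,D(T)}$, and finish with the descent-complementation $D(T^{t})=D(T)^{c}$ under transposition. This is exactly the route taken in \cite{MR2115257}, so there is nothing to contrast.

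One minor wording issue: your dichotomy ``$i+1$ lies (weakly) below $i$ in the same or a smaller column'' versus ``(weakly) to the right in the same or a smaller row'' is slightly loose. The clean statement is that for consecutive entries $i,i+1$ in positions $(r,c),(r',c')$ of a standard tableau, one has exactly one of $r'>r$ (forcing $c'\le c$) or $c'>c$ (forcing $r'\le r$); the case $r'>r,\,c'>c$ is ruled out because the cell $(r,c')$ would then carry an entry strictly between $i$ and $i+1$. Transposition swaps these two alternatives, giving $D(T^{t})=D(T)^{c}$. With that tightening your proof is complete.
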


For $x=(i,j)\in\bb{N}\times\bb{N}$, we set $c(x)=j-i$. Let $(s_{0},s_{1},\ldots,s_{r-1})$ be an $r$-tuple of integers satisfying $s_{i}\equiv i\mod r$, and let $\boldsymbol{\mu}=(\mu^{(0)},\mu^{(1)},\ldots,\mu^{(r-1)})$ be an $r$-tuple of partitions. For $x\in\mu^{(i)}$, we set $$\tilde{c}(x)=rc(x)+s_{i}.$$ Let $T$ be an semistandard tableau of shape $\boldsymbol{\mu}$, that is, an element of $\mr{SSYT}(\mu^{(0)})\times\mr{SSYT}(\mu^{(1)})\times\cdots\times\mr{SSYT}(\mu^{(r-1)})$. An \textit{inversion} is a pair $(x,y)$ of cells of $\boldsymbol{\mu}$ satisfying $T(x)<T(y)$ and $0<\tilde{c}(x)-\tilde{c}(y)<r$. We denote by $\mr{inv}(T)$ the number of inversions in $T$.

\begin{prop}[\cite{MR2115257}, Corollary 5.2.4]
The following polynomial $$\sum_{T\in\mr{SSYT}(\boldsymbol{\mu})}q^{\mr{inv}(T)}z^{T}$$ is symmetric in $z$.
\end{prop}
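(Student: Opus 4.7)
The plan is to establish symmetry in $z$ by exhibiting, for each simple transposition $s_i=(i,i+1)$, a weight-preserving involution $\phi_i$ on $\mr{SSYT}(\boldsymbol{\mu})$ that preserves $\mr{inv}$ and swaps the multiplicities of $i$ and $i+1$ in the content. Because adjacent transpositions generate $\mf{S}_\infty$ acting on monomials, the existence of such involutions forces the generating function to be symmetric. This is a direct analogue of the classical Bender--Knuth argument, adapted to the tuple-of-shapes setting where one must carry an inversion statistic.

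Fix $i$. First I would restrict attention, in each component $\mu^{(k)}$, to the cells labeled $i$ or $i+1$, and declare a cell $x$ to be \emph{paired} if $T(x)=i$ and the cell directly below it in the same component and column has label $i+1$ (or vice versa); all other $i/(i+1)$-labeled cells are \emph{free}. Paired cells are fixed by $\phi_i$. By column-strictness, in every row of every component the free cells form an initial block of $i$'s followed by a final block of $(i+1)$'s of lengths, say, $a$ and $b$; $\phi_i$ replaces this block by $b$ $i$'s followed by $a$ $(i+1)$'s. Row-strictness is preserved by construction, and the swap obviously exchanges the number of $i$'s and $(i+1)$'s.

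The main obstacle, and the substance of the argument, is to show that $\phi_i$ preserves $\mr{inv}(T)$. Inversions involving a cell with label outside $\{i,i+1\}$ are unaffected, because $\phi_i$ does not move cells and only changes labels between $i$ and $i+1$, while the conditions $T(x)<T(y)$ and $0<\tilde c(x)-\tilde c(y)<r$ depend on labels only through which of the thresholds $\leq i-1$, $=i$, $=i+1$, $\geq i+2$ they fall into; the first and last of these are unchanged, and the middle two are swapped, which leaves the relevant inequalities invariant. What remains is to analyze inversions among $i/(i+1)$-labeled cells. Here I would use the fact that two free cells in the same row of a single component contribute $\tilde c$-values differing by a nonzero multiple of $r$, hence cannot form an inversion; so only inversions between cells in \emph{different} rows (possibly in different components) matter. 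Indexing these by $\tilde c$-congruence class modulo $r$ and by the pair of components involved, one checks that the quantity $\mr{inv}(T)$ splits as a sum indexed by pairs of rows, each summand depending only on the multiset of values and the relative positions of the two rows. Within each such pair, a straightforward computation shows the contribution depends only on $\min(a,b)$ of certain block-length pairs, which is invariant under the swap $a\leftrightarrow b$.

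The hard part, and the place where I would need to be most careful, is this final combinatorial identity on pairs of rows: each pair of rows induces four disjoint intervals of cells of labels in $\{i,i+1\}$ with prescribed content, and one must verify that the total number of cross-row $i$--$(i+1)$ inversions is unchanged by $\phi_i$. This reduces to a finite local calculation for each configuration of paired and free cells, which I would handle by tracking, for each column of the row pair, how the two $\tilde c$-values compare, and checking the four cases (both free, one free and one paired, both paired in matching or non-matching columns). Given the invariance on each such row pair, summation yields $\mr{inv}(\phi_i(T))=\mr{inv}(T)$, completing the proof.
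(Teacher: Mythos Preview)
The paper does not give its own proof of this proposition: it is quoted as Corollary~5.2.4 of \cite{MR2115257} in the preliminaries section and used as a black box. So there is no in-paper argument to compare your proposal against. (The paper does remark, in Remark~4.16, that for the particular polynomials $D^{\la}(z;q)$ needed here the symmetry can be recovered from the geometry, but that is a different statement.)

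As for the proof in the cited source: the symmetry in \cite{MR2115257} is not obtained by a Bender--Knuth involution. Rather, HHLRU identify these generating functions with (shifts of) the combinatorial LLT polynomials of Lascoux--Leclerc--Thibon via an explicit bijection between semistandard fillings of $\boldsymbol{\mu}$ and $r$-ribbon tableaux of an associated shape, matching $\mr{inv}$ with the cospin statistic. Symmetry then follows from the LLT result, whose original proof is algebraic (Fock space representations of $U_q(\widehat{\mf{sl}}_r)$). Your approach is therefore genuinely different in spirit: you aim for a direct, self-contained combinatorial argument.

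On the content of your sketch: the reduction to inversions among $\{i,i+1\}$-labeled cells is fine, and the observation that free cells in the same row of a single component differ in $\tilde c$ by a multiple of $r$ is correct. But the crux---that the cross-row contribution between any pair of rows (possibly in different components) is invariant under the swap---is asserted rather than proved. This step is exactly where the difficulty lies: paired cells can sit in one row but not the other, the two rows may have different $\tilde c$-offsets modulo $r$, and the inversion window $0<\tilde c(x)-\tilde c(y)<r$ is asymmetric in $x,y$. Your ``finite local calculation'' hides a genuine case analysis that does not obviously collapse to a $\min(a,b)$-type invariant. A complete argument along these lines exists in the literature (see, e.g., the Haglund--Haiman--Loehr treatment of LLT positivity), but as written your proposal stops short of the actual verification.
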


It is well-known that irreducible representations of the symmetric group $\mf{S}_{n}$ over $\bb{C}$ are classified by partitions of $n$. We write $L_{\la}$ for the irreducible representation of $\mf{S}_{n}$ corresponding to $\la \vdash n$. Here, we take the convention that $L_{(n)}$ is the trivial representation. Let $V$ be a finite dimensional representation of $\mf{S}_{n}$. We define its Frobenius characteristic $\mca{F}(V,z)\in \mr{Sym}$ by $$\mca{F}(V,z)=\sum_{\la \vdash n}\dim \Hom _{\mf{S}_{n}}(L_{\la},V)s_{\la}(z).$$ 

Let $V$ be a representation of $\mf{S}_{n}$ with a grading $V=\bigoplus _{i=0}^{\infty}V_{i}$ such that each $V_{i}$ is finite dimensional and stable under the action of $\mf{S}_{n}$. We define its Frobenius series $\mca{F}(V,z;q)$ by$$\mca{F}(V,z;q)=\sum_{i=0}^{\infty}q^{i}\mca{F}(V_{i},z).$$ We define bigraded Frobenius series similarly for a $\mf{S}_{n}$-module with a bigrading.

Let $\mu =(\mu_{1},\ldots ,\mu_{l}) \vdash n$ be a partition of $n$. We define the parabolic subgroup $\mf{S}_{\mu}$ of $\mf{S}_{n}$ corresponding to $\mu$ by $$\mf{S}_{\mu}=\mf{S}_{\mu_{1}}\times\mf{S}_{\mu_{2}}\times\cdots\times\mf{S}_{\mu_{l}}.$$

\begin{lem}
Let $V$ be a finite dimensional representation of $\mf{S}_{n}$. Then $$\langle \mca{F}(V,z),h_{\mu} \rangle =\dim V^{\mf{S}_{\mu}}.$$
\end{lem}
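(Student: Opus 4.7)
The plan is to reduce both sides to the same sum involving Kostka numbers, using the decomposition of $V$ into irreducibles together with the standard identification of $h_{\mu}$ as the Frobenius characteristic of an induced representation.

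First I would decompose $V$ into isotypic components. Write $V \cong \bigoplus_{\la \vdash n} m_{\la} L_{\la}$, where $m_{\la} = \dim \Hom_{\mf{S}_{n}}(L_{\la},V)$, so that by definition
\[
\mca{F}(V,z) = \sum_{\la \vdash n} m_{\la}\, s_{\la}(z).
\]
Using the Hall inner product identity $\langle s_{\la}, h_{\mu}\rangle = K_{\la\mu}$, which follows from $\langle h_{\la}, m_{\mu}\rangle = \delta_{\la\mu}$ together with the Kostka expansion $h_{\mu} = \sum_{\la} K_{\la\mu} s_{\la}$ (equivalently from duality between the bases $\{s_\la\}$ and itself combined with Pieri), one obtains
\[
\langle \mca{F}(V,z), h_{\mu}\rangle = \sum_{\la \vdash n} m_{\la}\, K_{\la\mu}.
\]

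Next I would interpret the right-hand side group-theoretically. By Frobenius reciprocity,
\[
V^{\mf{S}_{\mu}} \cong \Hom_{\mf{S}_{\mu}}(\mathbf{1}, V|_{\mf{S}_{\mu}}) \cong \Hom_{\mf{S}_{n}}\bigl(\mr{Ind}_{\mf{S}_{\mu}}^{\mf{S}_{n}} \mathbf{1},\, V\bigr).
\]
Now I would invoke Young's rule, which gives the decomposition $\mr{Ind}_{\mf{S}_{\mu}}^{\mf{S}_{n}} \mathbf{1} \cong \bigoplus_{\la} K_{\la\mu} L_{\la}$; this is equivalent to the classical assertion $\mca{F}(\mr{Ind}_{\mf{S}_{\mu}}^{\mf{S}_{n}} \mathbf{1}, z) = h_{\mu}$, and could alternatively be taken as the defining property of the Frobenius characteristic map. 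Combining Young's rule with the isotypic decomposition of $V$ and Schur's lemma yields
\[
\dim V^{\mf{S}_{\mu}} = \sum_{\la \vdash n} K_{\la\mu}\, m_{\la},
\]
which matches the expression obtained for $\langle \mca{F}(V,z), h_{\mu}\rangle$, completing the proof.

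There is no real obstacle here; the statement is a standard consequence of the fact that the Frobenius characteristic is an isometry from the representation ring of $\mf{S}_{n}$ (equipped with $(V,W) \mapsto \dim\Hom_{\mf{S}_{n}}(V,W)$) to $\mr{Sym}^{n}$ (with the Hall inner product) that sends trivial induction from $\mf{S}_{\mu}$ to $h_{\mu}$. The only delicate point is citing the correct form of Young's rule, which is routine.
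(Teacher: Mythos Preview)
Your proof is correct and follows essentially the same approach as the paper's: both reduce the two sides to Kostka numbers via the identity $\langle s_{\la}, h_{\mu}\rangle = K_{\la\mu}$, Frobenius reciprocity, and Young's rule for the decomposition of $\mr{Ind}_{\mf{S}_{\mu}}^{\mf{S}_{n}}(\mb{triv})$. The only cosmetic difference is that the paper first reduces to $V = L_{\la}$ by semisimplicity, whereas you carry the multiplicities $m_{\la}$ through the whole computation.
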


\begin{proof}
We may assume $V=L_{\la}$ by semisimplicity of representations of $\mf{S}_{n}$. Then the LHS is
\begin{equation*}
\langle s_{\la},h_{\mu}\rangle =\sum_{\nu \vdash n}K_{\la \nu}\langle m_{\nu},h_{\mu}\rangle =K_{\la \mu},
\end{equation*}
where $K_{\la \mu}$ is the Kostka number.

On the other hand, the RHS is
\begin{align*}
\dim L_{\la}^{\mf{S}_{\mu}}&=\dim \Hom_{\mf{S}_{\mu}}(\mb{triv},L_{\la}) \\
                           &=\dim \Hom_{\mf{S}_{n}}\left( \mr{Ind}_{\mf{S}_{\mu}}^{\mf{S}_{n}}(\mb{triv}),L_{\la}\right) =K_{\la \mu},
\end{align*}
where $\mb{triv}$ is the trivial representation of $\mf{S}_{\mu}$ and we used the fact (see for example \cite{MR1824028}) that $$\mr{Ind}_{\mf{S}_{\mu}}^{\mf{S}_{n}}(\mb{triv})\cong \bigoplus _{\nu \vdash n}L_{\nu}^{\oplus K_{\nu \mu}}.$$  
\end{proof}                             

The above lemma can be used to determine the structure of representations of the symmetric groups since $h_{\mu}$'s form a basis of the space of symmetric polynomials of degree $n$. Except for type $A$, the number of irreducible representations of Weyl groups and the number of parabolic subgroups are distinct. Hence this method does not work for other types.

\subsection{A generalization of HHLRU symmetric functions}\mbox{}

Let $m$ be a nonnegative integer and $b$ an integer such that $1\leq b<n$, $(n,b)=1$. We fix $m$ and $b$ throughout this paper. Let $$\delta =(m(n-1)+b-1,m(n-2)+b-1,\ldots ,m+b-1,b-1).$$  Let $$\delta' =(\delta'_{1},\ldots ,\delta'_{n}),$$ where $\delta'_{l}=l'$ is the integer such that $lb=l'n+l''$, $1\leq l''\leq n$. Note that $\delta -\delta'$ is a partition and its Young diagram consists of all boxes lying below the line through $(0,mn+b)$ and $(n,0)$.

Let $\la\subset(\delta -\delta')$ be a partition. We define $d$-inversion statistics on $\mr{SSYT}(\la+(1^{n})/\la)$ or $\mr{SSYT}_{-}(\la+(1^{n})/\la)$ generalizing the case of $m=1$, $b=1$ recalled in the introduction. For this purpose, we introduce some more notation.

For $x=(i,j)\in \bb{N}\times \bb{N}$, we set $d_{m}(x)=mi+j$ and $$r(x)=(mn+b)n-(mn+b)(i+1)-n(j+1).$$ For two elements $x=(i,j)$, $y=(i',j')\in \{0,1,\ldots,n-1\}\times \bb{N}$ satisfying $i\neq i'$, we set $d(x,y):=d_{m}(y)-d_{m}(x)+l'$, where $l'$ is the integer such that $(i'-i)b=l'n+l''$, $1\leq l''\leq n-1$. We also set $l(x,y)=l''$. We write $x>_{d}y$ if $d(x,y)\geq 0$. 

Since $(i-i')b=(-1-l')n+(n-l'')$, we have $$d(y,x)=d_{m}(x)-d_{m}(y)-1-l'=-d(x,y)-1$$ and $l(y,x)=n-l(x,y)$. Note that since $0\leq i,i'\leq n-1$, we have $l(x,y)\leq n-1$. By $$\frac{r(x)-r(y)}{n}=m(i'-i)+(j'-j)+l'+\frac{l(x,y)}{n},$$ we have $$d(x,y)=\left[ \frac{r(x)-r(y)}{n}\right],$$ where $\left[ r\right]$ is the greatest integer not greater than $r$. It follows from this description of $d(x,y)$ that $x>_{d}y$ if and only if $r(x)>r(y)$. Here, note that $r(x)=r(y)$ implies $x=y$. Hence $>_{d}$ defines a total ordering on $\{0,1,\ldots,n-1\}\times \bb{N}$. 

For $x>_{d}y$, we set
\begin{eqnarray*}
m(x,y)&=&\begin{cases}\mr{max}(0,m+1-d(x,y)) & \mbox{ if }1\leq l(x,y)<b, \\ \mr{max}(0,m-d(x,y)) & \mbox{ if }b\leq l(x,y)<n,\end{cases}\\
n(x,y)&=&\mr{max}(0,m(x,y)-1).
\end{eqnarray*}

We set 
\begin{equation}
A=\{ (x,y)\in\left(\la+(1^{n})/\la\right) \times\left(\la+(1^{n})/\la\right) \mid x>_{d}y,i>i',l(x,y)<b,\mbox{ and }l(y,x)\geq b\}, 
\end{equation}
\begin{equation}
B=\{ (x,y)\in\left(\la+(1^{n})/\la\right) \times\left(\la+(1^{n})/\la\right) \mid x>_{d}y,i>i',l(x,y)\geq b,\mbox{ and }l(y,x)<b\}.
\end{equation}
Note that if $b=1$, both $A$ and $B$ are empty.

Let $T\in \mr{SSYT}(\la+(1^{n})/\la)$ (resp. $T\in \mr{SSYT}_{-}(\la+(1^{n})/\la)$). Let $x=(i,j)$, $y=(i',j')\in \la+(1^{n})/\la$ with $x>_{d}y$. We say that this pair of entries contributes $d$-{\it inversions} $h$ times, where $h\in \bb{Z}$ is determined by the following rules:
\begin{enumerate}
\item if $T(x)<T(y)$ (resp. $T(x)\leq T(y)$), we have 
\begin{equation*}
h=\begin{cases}m(x,y)-1 & \mbox{ if }(x,y)\in A,\\ m(x,y)+1 & \mbox{ if }(x,y)\in B,\\ m(x,y) & \mbox{ if }(x,y)\notin A\cup B.\end{cases}
\end{equation*}
\item if $T(x)\geq T(y)$ (resp. $T(x)>T(y)$), we have 
\begin{equation*}
h=\begin{cases}n(x,y)-1 & \mbox{ if }(x,y)\in A,\\ n(x,y)+1 & \mbox{ if }(x,y)\in B,\\ n(x,y) & \mbox{ if }(x,y)\notin A\cup B.\end{cases}
\end{equation*}
\end{enumerate}
We set $\mr{dinv}(T)$ to be the total number of $d$-inversions. See Figure 2 for an example.

\begin{figure}
\begin{center}
\setlength{\unitlength}{1pt}
\begin{picture}(60,100)

\put(0.2,80.4){\framebox(19.6,19.8){5}}
\put(0.2,60.3){\framebox(19.6,19.8){3}}
\put(0.2,40.2){\framebox(19.6,19.8){2}}
\put(20.2,20.2){\framebox(19.6,19.6){1}}
\put(40.2,0.2){\framebox(19.8,19.6){4}}

\put(0,20){\dashbox{2}(20,20){}}
\put(0,0){\dashbox{2}(20,20){}}
\put(20,0){\dashbox{2}(20,20){}}

\end{picture}
\caption{An example of semistandard Young tableau of skew shape $\la+(1^{n})/\la$, where $n=5, m=0, b=3$ and $\la=(2,1)$. In this case, $(2,1),(1,4)$, and $(3,4)$ belong to $A$, and $(2,4)$ belongs to $B$. The pairs $(2,4),(1,2)$, and $(4,5)$ contribute $1$ d-inversion and $(2,1)$ contributes $-1$ d-inversion. Hence $\mr{dinv}=2$.}
\end{center}
\end{figure}
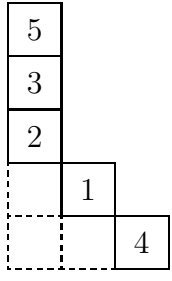

We set $$m(\la)=\mr{max}\{\mr{dinv}(T)\mid T\in \mr{SSYT}(\la +(1^{n})/\la)\}.$$ Note that the same maximum is also attained by a negative semistandard tableau satisfying $T(x)=T(y)\in \mca{A}_{-}$ for any $x,y\in\la+(1^{n})/\la$.

We further define two variants of $d$-inversions. As above, let $T\in \mr{SSYT}(\la+(1^{n})/\la)$ (resp. $T\in \mr{SSYT}_{-}(\la+(1^{n})/\la)$) and $x,y\in\la+(1^{n})/\la$. We say that this pair form a {\it reduced} $d$-{\it inversion} if $T(x)<T(y)$ (resp. $T(x)\leq T(y)$) and the following condition hold:
\begin{equation}
\begin{cases}
0\leq d(x,y)\leq m,\mbox{ if }1\leq l(x,y)<b,\\
0\leq d(x,y)\leq m-1,\mbox{ if }b\leq l(x,y)<n.
\end{cases}
\end{equation}
We set $\mr{dinv}'(T)$ to be the number of reduced $d$-inversions. We define $\mr{dinv}''(T)$ in the same way as $\mr{dinv}'(T)$ by replacing the condition $T(x)<T(y)$ (resp. $T(x)\leq T(y)$) by $T(x)\geq T(y)$ (resp. $T(x)>T(y)$).

\begin{lem}
There is a constant $e(\la)$ depending only on $\la\subset(\delta-\delta')$ such that $$\mr{dinv}(T)=e(\la)+\mr{dinv}'(T)$$ for any $T\in\mr{SSYT}(\la+(1^{n})/\la)$ or $T\in\mr{SSYT}_{-}(\la+(1^{n})/\la)$.
\end{lem}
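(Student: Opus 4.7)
The plan is to localize both $\mr{dinv}$ and $\mr{dinv}'$ as sums indexed by ordered pairs of boxes $(x,y)$ with $x>_d y$, and to verify that the pointwise difference depends only on the pair, not on the tableau.

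Fix such a pair $(x,y)$ with $x=(i,j)$ and $y=(i',j')$. Write $f(x,y,T)$ for its contribution to $\mr{dinv}(T)$ and $g(x,y,T)$ for its contribution to $\mr{dinv}'(T)$. By definition, $f(x,y,T)$ takes a value $h_1(x,y)$ (built from $m(x,y)$) when $T(x)<T(y)$ (resp.\ $T(x)\le T(y)$ in the negative case) and a value $h_2(x,y)$ (built from $n(x,y)$) in the opposite case, while $g(x,y,T)$ equals the indicator $\chi(x,y)$ of the range condition (3.3) in the first case and $0$ in the second. The central step is to establish the pointwise identity
$$h_1(x,y) - \chi(x,y) = h_2(x,y),$$
from which it follows that $f(x,y,T) - g(x,y,T) = h_2(x,y)$ regardless of which case applies to $T$ on the pair.

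To prove this identity, decompose $h_1(x,y) = m(x,y) + \sigma(x,y)$ and $h_2(x,y) = n(x,y) + \sigma(x,y)$, where $\sigma(x,y) \in \{-1,0,+1\}$ is the $A/B$-correction (equal to $-1$ if $(x,y)\in A$, $+1$ if $(x,y)\in B$, and $0$ otherwise). The corrections $\sigma$ cancel in $h_1 - h_2$, reducing the required identity to
$$m(x,y) - n(x,y) = \chi(x,y).$$
Since $n(x,y) = \max(0, m(x,y)-1)$, this difference is $1$ when $m(x,y)\ge 1$ and $0$ when $m(x,y)=0$. A direct inspection of the two piecewise formulas for $m(x,y)$ (according to whether $1\le l(x,y)<b$ or $b\le l(x,y)<n$), combined with the observation $d(x,y)\ge 0$ for $x>_d y$, shows that $m(x,y)\ge 1$ holds exactly when $(x,y)$ satisfies the corresponding case of (3.3); this yields the desired equality.

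Summing over all pairs with $x>_d y$ then gives
$$\mr{dinv}(T) - \mr{dinv}'(T) \;=\; \sum_{\substack{x,y\in\la+(1^n)/\la \\ x>_d y}} h_2(x,y),$$
and the right-hand side depends only on $\la$ (and the fixed parameters $m,b$), so we may take it as $e(\la)$. The same argument works verbatim for negative semistandard tableaux, since the case split in the definitions of $\mr{dinv}$ and $\mr{dinv}'$ has the same structure in that setting. The only point needing some care is the verification that the $A/B$-correction appears in the same way in $h_1$ and $h_2$ (so that it cancels in the difference); beyond this bookkeeping, the proof is a routine check of the two subcases in the definition of $m(x,y)$.
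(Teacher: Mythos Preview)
Your proof is correct and follows essentially the same approach as the paper. The paper's argument is just a terser version of yours: it observes directly that $m(x,y)=n(x,y)+1$ precisely when (3.3) holds and $m(x,y)=n(x,y)$ otherwise, and then defines $e(\la)$ as $\mr{dinv}(T)$ for a (not necessarily semistandard) tableau with $T(x)>T(y)$ whenever $x>_d y$, which is exactly your $\sum h_2(x,y)$.
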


\begin{proof}
Note that for $x,y\in\la+(1^{n})/\la$, $x>_{d}y$, we have $m(x,y)=n(x,y)+1$ if $x$ and $y$ satisfy (3.3) and $m(x,y)=n(x,y)$ otherwise. 

Hence it suffices to take $e(\la)$ to be the number of $d$-inversions for $T$ satisfying $T(x)>T(y)$ for all $x>_{d}y$. We remark that such tableaux may not be semistandard, but the definition of $\mr{dinv}$ also works for such tableaux. 
\end{proof}

\begin{lem}
For any $T\in\mr{SSYT}(\la+(1^{n})/\la)$ or $T\in\mr{SSYT}_{-}(\la+(1^{n})/\la)$, we have $$m(\la)-e(\la)=\mr{dinv}'(T)+\mr{dinv}''(T).$$
\end{lem}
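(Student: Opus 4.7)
The proof rests on the observation that $\mr{dinv}'(T) + \mr{dinv}''(T)$ depends only on $\la$. Indeed, the two statistics enumerate, respectively, the pairs $(x,y) \in (\la+(1^n)/\la)^2$ with $x >_d y$ and condition (3.3) satisfied for which $T(x) < T(y)$ (resp.\ $T(x) \leq T(y)$), and those for which $T(x) \geq T(y)$ (resp.\ $T(x) > T(y)$). Whether $T$ is positive or negative, these two conditions partition the set of all such pairs, so the sum $\mr{dinv}'(T) + \mr{dinv}''(T)$ equals the total number $C(\la)$ of pairs $x >_d y$ satisfying (3.3), a quantity depending only on $\la$.

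It then remains to identify $C(\la)$ with $m(\la) - e(\la)$, for which I would invoke the remark following the definition of $m(\la)$. Consider the all-$\bar{1}$ negative tableau $T_0 \in \mr{SSYT}_-(\la+(1^n)/\la)$; this is well-defined because $\la+(1^n)/\la$ has exactly one box per row, rendering the strict-row condition on negative SSYT vacuous and the weak-column condition automatic. Every pair satisfies $T_0(x) = T_0(y)$, so in particular $T_0(x) \leq T_0(y)$: hence $\mr{dinv}''(T_0) = 0$ and $\mr{dinv}'(T_0) = C(\la)$. Applying Lemma 3.3 to $T_0$ gives $\mr{dinv}(T_0) = e(\la) + C(\la)$, while the remark provides $\mr{dinv}(T_0) = m(\la)$. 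Combining yields $m(\la) - e(\la) = C(\la) = \mr{dinv}'(T) + \mr{dinv}''(T)$ for every $T$.

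The only subtle ingredient is the cited remark, namely that no positive SSYT can do better than $T_0$. This is structurally plausible: since $m(x,y) \geq n(x,y)$ pairwise, case (1) of the definition of $\mr{dinv}$ contributes at least as much as case (2) for every pair, and only the constant negative tableau forces every pair into case (1) simultaneously; a pair-by-pair comparison via Lemma 3.3 then bounds $\mr{dinv}(T)$ above by $\mr{dinv}(T_0)$, with attainment by some positive SSYT verifiable separately. Taking this remark as given by the paper, the main content of the lemma really is the constancy of $\mr{dinv}' + \mr{dinv}''$ observed in the first paragraph.
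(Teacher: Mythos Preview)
Your proof is correct and follows essentially the same line as the paper's: both arguments recognize that $\mr{dinv}'(T)+\mr{dinv}''(T)$ counts exactly the pairs $x>_d y$ satisfying condition (3.3), and then identify this count with $m(\la)-e(\la)$ via Lemma~3.4 (which you cite as ``Lemma~3.3''). The paper phrases the identification as $m(\la)-e(\la)=\max_T \mr{dinv}'(T)$ rather than evaluating at the constant negative tableau $T_0$, but the content is the same, and your third paragraph is unnecessary since the remark you invoke is asserted in the text.
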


\begin{proof}
By Lemma 3.4, we have $$m(\la)-e(\la)=\mr{max}\{\mr{dinv}'(T)\mid T\in \mr{SSYT}(\la +(1^{n})/\la)\}.$$ Therefore, $x>_{d}y$ contributes to both sides of the assertion if and only if it satisfies (3.3). This proves the lemma.
\end{proof}

\begin{cor}
For any $T\in\mr{SSYT}(\la+(1^{n})/\la)$ or $T\in\mr{SSYT}_{-}(\la+(1^{n})/\la)$, we have $$\mr{dinv}(T)=m(\la)-\mr{dinv}''(T).$$
\end{cor}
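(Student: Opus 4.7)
The plan is to derive Corollary 3.6 as an immediate algebraic consequence of the two preceding lemmas, Lemma 3.4 and Lemma 3.5. The key observation is that both lemmas already express $\mr{dinv}(T)$ and $m(\la)-e(\la)$ in terms of the auxiliary statistics $\mr{dinv}'(T)$ and $\mr{dinv}''(T)$, so we only need to eliminate the intermediate quantity $e(\la)$.

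Concretely, I would first invoke Lemma 3.4 to write
\begin{equation*}
\mr{dinv}(T) = e(\la) + \mr{dinv}'(T).
\end{equation*}
Then I would use Lemma 3.5 to rewrite
\begin{equation*}
e(\la) = m(\la) - \mr{dinv}'(T) - \mr{dinv}''(T),
\end{equation*}
where we use crucially the fact that the right-hand side of Lemma 3.5 does not actually depend on the chosen tableau $T$ despite its appearance (this is implicit in the statement of Lemma 3.5 since the left-hand side is a constant). Substituting, the $\mr{dinv}'(T)$ terms cancel, and we obtain
\begin{equation*}
\mr{dinv}(T) = m(\la) - \mr{dinv}''(T),
\end{equation*}
which is the desired identity, valid uniformly for positive and negative semistandard tableaux since both lemmas are stated in that generality.

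There is no real obstacle here; the work has already been done in the two lemmas. The only conceptual point worth flagging in the write-up is the independence of $e(\la)$ from $T$, which is guaranteed by Lemma 3.4 (the left-hand side depends on $T$ but the decomposition isolates a $T$-independent constant), so that combining the two lemmas yields a $T$-independent relation between $\mr{dinv}(T)$, $\mr{dinv}''(T)$, and $m(\la)$.
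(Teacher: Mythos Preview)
Your proposal is correct and matches the paper's approach exactly: the corollary is stated there without proof, as it follows immediately from combining Lemma~3.4 and Lemma~3.5 in just the way you describe.
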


Now we define an analogue of $D^{\la}_{n}(z;q)$ for general $m$, $b$.

\begin{dfn}
For $\la\subset(\delta-\delta')$, we set $$D^{\la}(z;q)=\sum_{T\in\mr{SSYT}(\la+(1^{n})/\la)}q^{\mr{dinv}(T)}z^{T},$$ and $$D(z;q,t)=\sum_{\la\subset(\delta-\delta')}t^{|(\delta-\delta')/\la|}D^{\la}(z;q).$$
\end{dfn}

\begin{prop}[Proved for $b=1$ case in \cite{MR2115257}]
For every $\la\subset(\delta-\delta')$, the polynomial $D^{\la}(z;q)$ is symmetric in $z$.
\end{prop}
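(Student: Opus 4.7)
The plan is to express $D^{\la}(z;q)$, up to a power of $q$, as a specialization of the inversion generating function of Proposition~3.2 on a suitable tuple of partitions, generalizing the argument of \cite{MR2115257} for the case $b=m=1$.

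The first step is to invoke Lemma~3.4 to reduce the assertion to the symmetry of $\sum_{T}q^{\mr{dinv}'(T)}z^{T}$, since $\mr{dinv}(T)=e(\la)+\mr{dinv}'(T)$ with $e(\la)$ independent of $T$. Using $r(x)-r(y)=n\,d(x,y)+l(x,y)$ with $1\leq l(x,y)\leq n-1$, condition~(3.3) says that a pair $(x,y)$ with $x>_{d}y$ contributes a reduced $d$-inversion (in the positive case, when $T(x)<T(y)$) iff
$$r(x)-r(y)\in\{1,\ldots,mn+b-1\}\setminus\{n,2n,\ldots,mn\}.$$
Crucially, $r(x_{i})\equiv -b(i+1)\pmod{n}$; since $\gcd(b,n)=1$, any two distinct boxes of $\la+(1^{n})/\la$ have distinct residues of $r$ modulo $n$, so the excluded multiples of $n$ never actually occur as differences $r(x)-r(y)$.

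Next I would build the target tuple. Set $r=mn+b$ and partition the $n$ boxes of $\la+(1^{n})/\la$ by their residues $r(x_{i})\bmod(mn+b)$. Since $r(x_{i})\equiv -n(\la_{i+1}+1)\pmod{mn+b}$, since $\gcd(n,mn+b)=\gcd(n,b)=1$, and since $0\leq\la_{i+1}<mn+b$, two boxes share a residue iff $\la_{i+1}=\la_{i'+1}$, i.e., iff they occupy the same column of the skew diagram. Within such a class the $r$-values form an arithmetic progression of common difference $mn+b$, so we take that class to be a one-column partition $\mu^{(k)}=(1^{h_{k}})$, where $k$ is the common residue mod $mn+b$ and $h_{k}$ is the number of boxes in that column. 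We choose $s_{k}$ to be the smallest $r$-value in class $k$; then $s_{k}\equiv k\pmod{mn+b}$, and the bijection sending the top cell of $\mu^{(k)}$ to the box of largest $r$ in its column (and so on downward) makes $\tilde{c}$ of each cell equal to $r$ of the corresponding box.

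The final step is to verify that the resulting map $\mr{SSYT}(\la+(1^{n})/\la)\to\mr{SSYT}(\boldsymbol{\mu})$ is a bijection under which $\mr{dinv}'(T)=\mr{inv}(T')$. The column-strict condition transfers because the cells of $\mu^{(k)}=(1^{h_{k}})$ ordered top-to-bottom correspond to the boxes of the column ordered top-to-bottom (both are ordered by decreasing $\tilde{c}=r$). An inversion on $\boldsymbol{\mu}$ is then a pair with $T(x)<T(y)$ and $0<\tilde{c}(x)-\tilde{c}(y)<mn+b$, i.e., $0<r(x)-r(y)<mn+b$; by the mod-$n$ observation the multiples of $n$ are automatically avoided, so this is precisely a reduced $d$-inversion. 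Proposition~3.2 applied to $\boldsymbol{\mu}$ then gives the symmetry of $\sum_{T}q^{\mr{dinv}'(T)}z^{T}$, hence of $D^{\la}(z;q)$. The main obstacle is the careful bookkeeping in the construction of the bijection: one must check that within each column the $r$-values and cell contents line up consistently, that the column-strict condition passes to the tuple, and that the two residue analyses (mod $n$ and mod $mn+b$) jointly control the set of potential inversions.
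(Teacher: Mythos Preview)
Your approach is essentially the same as the paper's: both reduce via Lemma~3.4 to $\mr{dinv}'$, set $r=mn+b$, assign to each column of $\la+(1^{n})/\la$ a one-column partition $\mu^{(k)}$, and verify that reduced $d$-inversions correspond exactly to inversions in Proposition~3.2 via the identity $\tilde c(x')-\tilde c(y')=n\,d(x,y)+l(x,y)$ (the paper's $\tilde c(x')=-nj-(mn+b)i$ differs from your $r(x)$ by an irrelevant additive constant, so your mod-$n$ observation is just the fact that $1\le l(x,y)\le n-1$). One small slip in the bookkeeping: for $\tilde c=r$ under the natural column bijection you need $s_{k}$ to be the \emph{largest} $r$-value in the class (equivalently the paper's $s_{\beta(j)}=-nj-(mn+b)\la'_{j+1}$, shifted by that same global constant), not the smallest---this is precisely the detail you flag at the end, and once corrected the argument goes through.
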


\begin{proof}
To prove the assertion, it suffices to identify $D^{\la}(z;q)$ with an polynomial in Proposition 3.2. In the notation of Proposition 3.2, we take $r=mn+b$.

We write $\la=(0^{\alpha_{0}},1^{\alpha_{1}},\ldots,(r-1)^{\alpha_{r-1}})$, where $\alpha_{0}$ is determined by the relation $\sum_{j}\alpha_{j}=n$. Let $\beta$ be the permutation of $\{0,1,\ldots,r-1\}$ such that $$\beta(j)\equiv -nj \mod r.$$ Such $\beta$ exists since $n$ and $r$ are coprime. We set $$\mu^{(\beta(j))}=(1^{\alpha_{j}}).$$ Then we have a natural bijection between cells of $\la+(1^{n})/\la$ and $\boldsymbol{\mu}$ by translating column $j$ of $\la+(1^{n})/\la$ to $\mu^{(\beta(j))}$. 

Let $$s_{\beta(j)}=-n j-(m n+b)\la'_{j+1}.$$ Let $x=(i,j)\in\la+(1^{n})/\la$ and $x'\in\boldsymbol{\mu}$ be the corresponding cell. Then we have $$\tilde{c}(x')=-n j-(m n+b)i.$$

For two distinct cells $x,y\in\la+(1^{n})/\la$ and the corresponding cells $x',y'\in\boldsymbol{\mu}$, we have $$\tilde{c}(x')-\tilde{c}(y')=nd(x,y)+l(x,y).$$ Hence the inequality $$0<\tilde{c}(x')-\tilde{c}(y')<mn+b$$ is equivalent to (3.3). Therefore, if $T\in\mr{SSYT}(\la+(1^{n})/\la)$ and $T'\in\mr{SSYT}(\boldsymbol{\mu})$ are coresponding to each other, then we have $$\mr{dinv}(T)=\mr{inv}(T').$$ By Proposition 3.2, this implies that $$\sum_{T\in\mr{SSYT}(\la+(1^{n})/\la)}q^{\mr{dinv'}(T)}z^{T}$$ is a symmetric function. Therefore, the assertion follows from Lemma 3.4.

\end{proof}

Just as in \cite{MR2115257}, we can prove the following lemma by virtue of the notion of negative Young tableau.

\begin{lem}
We have $$\omega D^{\la}(w;q)=\sum_{T\in\mr{SSYT}_{-}(\la+(1^{n})/\la)}q^{\mr{dinv}(T)}z^{T}.$$
\end{lem}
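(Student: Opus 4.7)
The plan is to expand $D^{\la}(z;q)$ in the Gessel quasi-symmetric function basis and then apply Lemma 3.1. Following the standard argument, I would set up a standardization map $\mr{std}: \mr{SSYT}(\la+(1^{n})/\la) \to \mr{SYT}(\la+(1^{n})/\la)$ whose ties among equal entries are broken by the total order $>_{d}$ from Section 3.2: among cells with a common $T$-value, the one with the larger $>_{d}$-rank receives the larger standard label. The standard argument then produces, for each $T_{0} \in \mr{SYT}(\la+(1^{n})/\la)$, a descent set $D(T_{0}) = \{i : T_{0}^{-1}(i) >_{d} T_{0}^{-1}(i+1)\}$ satisfying
\begin{equation*}
\sum_{T \in \mr{SSYT}(\la+(1^{n})/\la),\, \mr{std}(T) = T_{0}} z^{T} = \mca{Q}_{n, D(T_{0})}(z).
\end{equation*}

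The key step is to verify $\mr{dinv}(T) = \mr{dinv}(T_{0})$. Inspecting the definition, the contribution of a pair $(x,y)$ with $x >_{d} y$ depends only on (i) the membership of $(x,y)$ in $A$, $B$, or neither, which is intrinsic to the cells, and (ii) whether the pair falls under case (1) or case (2) of the definition of $h$. For $T(x) \neq T(y)$ the case is preserved under standardization since strict inequalities are preserved. For $T(x) = T(y)$, which falls under case (2) for positive tableaux (case (2) uses $T(x) \geq T(y)$), the chosen tie-breaking gives $T_{0}(x) > T_{0}(y)$, again case (2). Hence $\mr{dinv}$ descends to the standardization, and
\begin{equation*}
D^{\la}(z;q) = \sum_{T_{0} \in \mr{SYT}(\la+(1^{n})/\la)} q^{\mr{dinv}(T_{0})} \mca{Q}_{n, D(T_{0})}(z).
\end{equation*}
Applying Lemma 3.1 yields $\omega D^{\la}(w;q) = \sum_{T_{0}} q^{\mr{dinv}(T_{0})} \tilde{\mca{Q}}_{n, D(T_{0})}(w)$.

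For the negative side I would use the analogous standardization $\mr{std}_{-}: \mr{SSYT}_{-}(\la+(1^{n})/\la) \to \mr{SYT}(\la+(1^{n})/\la)$ with the opposite tie-breaking rule: among cells with a common $T$-value (which in a negative SSYT must share a column), the one with larger $>_{d}$-rank now receives the smaller standard label. Under this convention a pair $T(x)=T(y)$ with $x >_{d} y$ becomes $T_{0}(x) < T_{0}(y)$, which for a negative SSYT falls under case (1) (case (1) there reads $T(x) \leq T(y)$) -- matching the case the equality originally belonged to. So $\mr{dinv}(T) = \mr{dinv}(\mr{std}_{-}(T))$ again. The flipped tie-breaking is also precisely what interchanges the conventions $a_{i}=a_{i+1} \Rightarrow i \notin D$ and $a_{i}=a_{i+1} \Rightarrow i \in D$, producing
\begin{equation*}
\sum_{T \in \mr{SSYT}_{-}(\la+(1^{n})/\la),\, \mr{std}_{-}(T) = T_{0}} z^{T} = \tilde{\mca{Q}}_{n, D(T_{0})}(w)
\end{equation*}
for the same descent set $D(T_{0}) = \{i : T_{0}^{-1}(i) >_{d} T_{0}^{-1}(i+1)\}$. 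Summing over $T_{0}$ matches the expression for $\omega D^{\la}(w;q)$ just obtained.

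The main obstacle is confirming that the two tie-breaking conventions yield the \emph{same} descent set $D(T_{0})$, so that Lemma 3.1 can be applied term-by-term. Once this alignment is in place the argument is exactly the one given in \cite{MR2115257} for $b=1$, and the generalization to arbitrary $m$ and $b$ is automatic since the combinatorics of $\mr{dinv}$ in Section 3.2 was set up to interact with $>_{d}$ in precisely the same way.
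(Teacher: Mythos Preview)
Your proposal is correct and follows the same route as the paper's proof: standardize using the total order $>_{d}$, expand in Gessel quasi-symmetric functions indexed by the $d$-descent set of the standard tableau, and invoke Lemma~3.1 together with Proposition~3.8. Your ``main obstacle'' dissolves once you note that $D(T_{0})=\{i:T_{0}^{-1}(i)>_{d}T_{0}^{-1}(i+1)\}$ is defined intrinsically on the standard tableau $T_{0}$; your two tie-breaking rules are exactly what force equal adjacent entries to occur at positions $i\notin D(T_{0})$ in the positive case and $i\in D(T_{0})$ in the negative case, so the fiber sums are $\mca{Q}_{n,D(T_{0})}(z)$ and $\tilde{\mca{Q}}_{n,D(T_{0})}(w)$ for the \emph{same} set $D(T_{0})$ and the term-by-term matching is automatic.
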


\begin{proof}
We first expand $D^{\la}(z;q)$ into quasi-symmetric functions. 

Define $a\in\{1,\ldots,n-1\}$ to be a $d$-{\it descent} of a standard tableau $S\in\mr{SYT}(\la +(1^{n})/\la)$ if $S(x)=a$, $S(y)=a+1$ with $x>_{d}y$. We denote by $dd(S)\subset\{1,\ldots,n-1\}$ the set of $d$-descents of $S$. 

Define the standardization $\mr{st}(T)$ of a tableau $T\in\mr{SSYT}(\la+(1^{n})/\la)$ or $T\in\mr{SSYT}_{-}(\la+(1^{n})/\la)$ to be the unique standard tableau $S$ such that $T\circ S^{-1}$ is weakly increasing and if $T\circ S^{-1}(j)=T\circ S^{-1}(j+1)=\cdots=T\circ S^{-1}(k)=b$, then $\{j,\ldots,k-1\}\cap dd(S)$ is empty if $b$ is positive, equal to $\{j,\ldots,k-1\}$ if $b$ is negative. 

As in the proof of \cite{MR2115257} Theorem 3.2.1, we can verify that $$\sum_{\substack{T\in\mr{SSYT}(\la+(1^{n})/\la)\\ \mr{st}(T)=S}}z^{T}=\mca{Q}_{n,dd(S)}(z),$$ and $$\sum_{\substack{T\in\mr{SSYT}_{-}(\la+(1^{n})/\la)\\ \mr{st}(T)=S}}z^{T}=\tilde{\mca{Q}}_{n,dd(S)}(w).$$ 

It is easy to see that $\mr{dinv}(T)=\mr{dinv}(\mr{st}(T))$. Hence we have $$D^{\la}(z;q)=\sum_{S\in\mr{SYT}(\la +(1^{n})/\la)}q^{\mr{dinv}(S)}\mca{Q}_{n,dd(S)}(z),$$ and $$\sum_{T\in \mr{SSYT}_{-}(\la +(1^{n})/\la)}q^{\mr{dinv}(T)}z^{T}=\sum_{S\in\mr{SYT}_{-}(\la +(1^{n})/\la)}\tilde{\mca{Q}}_{n,dd(S)}(w).$$ 

The lemma follows from these formulas and Lemma 3.1 and Proposition 3.8.
\end{proof}

\section{Affine Springer fibers and the HHLRU type formula}

\subsection{Notation}\mbox{}

We keep the notation of section 2 specializing $G$ to be $SL_{n}$.

Let $\Pi=\left\{\alpha_{1},\ldots,\alpha_{n-1}\right\}\subset \Delta$ be the subset of simple roots associated with $B$. Here, we take the convention that roots of $\mf{b}$ are positive. Let $\check{\Pi}=\left\{\al_{1},\ldots,\al_{n-1}\right\}$ be the set of simple coroots, so that $\Lam=\bigoplus_{i=1}^{n-1}\bb{Z}\al_{i}$. We will view $\al_{i}=\ep_{i}-\ep_{i+1}$ as elements of $\left\{\left(x_{1},\ldots ,x_{n}\right)\in\bb{Z}^{n}\mid\Sigma_{i=1}^{n}x_{i}=0\right\}$, where $\ep_{i}=\left(0,\ldots,0,1,0,\ldots,0\right)$ with $1$ in the $i$-th component. 

Let $$\langle-,-\rangle:\Lam\times X^{\ast}(T)\rightarrow\bb{Z}$$ be the canonical pairing. 

We set $\al_{n}=-\al_{1}-\cdots-\al_{n-1}$. For any $l\in\bb{Z}$, we write $\al_{l}=\al_{l'}$, where $l'$ is the integer such that $1\leq l'\leq n$, and $l\equiv l'\mod n$. This notation will simplify some formula. 

We take $\check{\rho}\in\Lam\otimes_{\bb{Z}}\bb{Q}$ such that $\langle\check{\rho},\alpha_{i}\rangle=1$ for any $i$. For $k\in\bb{Z}$, we write $\Delta_{k}=\left\{\alpha\in R\mid\langle\check{\rho},\alpha\rangle=k\right\}.$ 

Let $\mu=(\mu_{1},\ldots,\mu_{l})$ be a composition of $n$, i.e. $\mu_{1}+\cdots+\mu_{l}=n$ and $\mu_{i}\geq 0$ for $i=1,\ldots,l$. We set $$\tilde{\mu}_{k}=\mu_{1}+\cdots+\mu_{k}$$ for $k=1,\ldots,l-1$. Let $\mf{p}_{\mu}$ be the parabolic subalgebra of $\mf{g}$ generated by $\mf{b}$ and $\{e_{-\alpha_{i}}\}_{i\in I_{\mu}}$, where $I_{\mu}=\left\{i\in\left\{1,\ldots,n-1\right\}\mid  i\ne\tilde{\mu}_{k},\mbox{ for any }k\right\}$. Let $P_{\mu}$ the parabolic subgroup corresponding to $\mf{p}_{\mu}$. 

Let $W_{\mu}$ be the subgroup of $W$ generated by $s_{i}$, $i\ne\tilde{\mu}_{k}$, for every $1\leq k<l$. We take $\check{\rho}_{\mu}\in\Lam\otimes_{\bb{Z}}\bb{Q}$ such that $\langle\check{\rho}_{\mu},\alpha_{i}\rangle$ equals to $1$ if $i=\tilde{\mu}_{k}$ for some $k$, and $0$ otherwise. 

Let $s_{\al}\in W$ be the reflection associated to $\al \in \Lam$. For each $i \in \left\{ 1, \ldots, n-1 \right\}$, we write $s_{i}=s_{\al_{i}}$. We set $s_{0}=t_{\al_{0}} s_{\al_{0}}\in W_{\mr{aff}}$ and for any integer $l$, we set $s_{l}=s_{l'}$, where $l'$ is the integer such that $1\leq l'\leq n$, and $l\equiv l'\mod n$. 

Since $G$ is simply connected, $W_{\mr{aff}}$ is a Coxeter group with its set of simple reflections $\left\{ s_{i}\right\}_{i=0,1,\ldots,n-1}$. Let $\ell : W_{\mr{aff}} \rightarrow \bb{Z}_{\geq 0}$ be the length function with respect to these simple reflections.
 
Let $W^{\mr{f}}$ be the set of minimal length representatives of $W_{\mr{aff}}/W$. For each $\lam\in\Lam$, we set $$\ell^{\mr{f}}(\lam) =\mr{min}\left\{\ell(t_{\lam}w)\mid w\in W\right\}.$$ This is the length of the unique element of $W^{\mr{f}}$ contained in the coset $t_{\lam}W$.

Let $\hat{P}_{\mu}$ be the parahoric subgroup corresponding to $\mu$ which are defined as the inverse image of $P_{\mu}$ under the projection $G(\mca{O})\rightarrow G(\bb{C})$, $\epsilon \mapsto 0$. 

Recall that we fixed two positive integers $m$ and $b$ satisfying $1\leq b<n$, and $(n,b)=1$. We set $$v:=\epsilon^{m}(\epsilon \sum_{\alpha\in\Delta_{b-n}}e_{\alpha}+\sum_{\alpha\in \Delta_{b}}e_{\alpha})\in\mf{g}[[\epsilon]].$$ This is a regular semisimple nil elliptic element.

In the notation of section 2.3, we have $v\in\hat{\mf{g}}_{\check{\rho},s}$, where $s=\frac{mn+b}{n}$. We also have $$\bar{v}=\sum_{\alpha\in\Delta_{b}\sqcup\Delta_{b-n}}e_{\alpha}.$$ This is regular semisimple. Hence the assumptions in section 2.3 are satisfied by taking $y=\check{\rho}_{\mu}$, $t=0$, $x=\frac{1}{n}\check{\rho}$, and $s=\frac{mn+b}{n}$. In this case, we have $\hat{G}_{y}=\hat{P}_{\mu}$ and $\hat{G}_{x}=I$. 

We write $$X_{v}=\{g\cdot G(\mca{O})\in G(F)/G(\mca{O})\mid \mr{Ad}(g)^{-1}(v)\in\mf{g}(\mca{O})\},$$ $$\hat{\mca{B}}_{v}=\{g\cdot I\in G(F)/I\mid \mr{Ad}(g)^{-1}(v)\in\mr{Lie}(I)\},$$ and $$\hat{\mca{P}}_{\mu,v}=\{g\cdot\hat{P}_{\mu}\in G(F)/\hat{P}_{\mu}\mid \mr{Ad}(g)^{-1}(v)\in\mr{Lie}(\hat{P}_{\mu})\}$$ for the affine Springer fibers associated to $v$. 

We have a $\bb{G}_{m}$-action on $\hat{\mca{B}}_{v}$. This action is induced from the $\bb{G}_{m}$-action on $\mf{g}(F)$ given by $t\cdot\epsilon^{k}e_{\alpha}=t^{2(nk+\langle\check{\rho},\alpha\rangle)}\epsilon^{k}e_{\alpha}$ for each $t\in\bb{C}^{\times}$, $k\in\bb{Z}$, and $\alpha\in\Delta$. The fixed point set of this action is contained in $\{w\cdot I/I\mid w\in W_{\mr{aff}}\}$.

\subsection{Closure relations}\mbox{}

We set
\begin{equation*}
P:=\left\{ (a_{i})_{i\in \bb{Z}} \mid a_{i}\in \bb{Z}, a_{n}=0, a_{1},\ldots ,a_{n-1}\geq 0, a_{i+n}=a_{i}+1 \right\},
\end{equation*}
which is essentially the set of $(n-1)$-tuple of nonnegative integers. Note that elements of $P$ are determined by $n-1$ components $(a_{1},\ldots ,a_{n-1})$, but we allow the index $i$ to take values on all integers in order to simplify several formulas in this section. For an element $(a_{i})_{i\in \bb{Z}}$ in $P$, let $a((a_{i})_{i\in \bb{Z}}):=a_{1}+\cdots +a_{n-1}$. We often abbreviate $a=a((a_{i})_{i\in \bb{Z}})$ when there is no risk of confusion from the context.

The set $P$ is designed to bridge the gap between group theoretical and combinatorial data. First, we have the following proposition which connects $P$ and $\Lam$.

\begin{prop}
There exists a bijection $\lam : P \rightarrow \Lam$ such that for $(a_{i})\in P$,
\begin{equation} 
\langle \lam ((a_{i})_{i\in \bb{Z}}), \alpha _{l} \rangle = \begin{cases}a_{l-a}-a_{l-a+1} & \mbox{if }l\notin n\bb{Z} \\ a_{l-a}-a_{l-a+1}+1 & \mbox{if }l\in n\bb{Z}. \end{cases}
\end{equation}
\end{prop}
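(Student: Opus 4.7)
The plan is to give an explicit formula for $\lam$ and verify its properties directly. Identifying $\Lam$ with $\{(x_1,\ldots,x_n)\in\bb{Z}^n:\sum_{i=1}^n x_i=0\}$ via $\al_i=\ep_i-\ep_{i+1}$, I set
$$\lam((a_i)_{i\in\bb{Z}}):=(a_{1-a},a_{2-a},\ldots,a_{n-a}),\quad\text{where }a=a_1+\cdots+a_{n-1}.$$
To check $\lam((a_i))\in\Lam$, I first establish the telescoping identity $\sum_{j=k+1}^{k+n}a_j=a+k$ by induction on $k$: the base case $k=0$ gives $a_1+\cdots+a_n=a$ since $a_n=0$, and the step uses $a_{k+n+1}=a_{k+1}+1$. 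Specializing at $k=-a$ yields $\sum_{i=1}^n a_{i-a}=0$. The pairing formula follows from $\langle\lam,\alpha_l\rangle=x_l-x_{l+1}$ for $l=1,\ldots,n-1$, which directly gives $a_{l-a}-a_{l-a+1}$. For $l=n$, using $\alpha_n=-\alpha_1-\cdots-\alpha_{n-1}$ I compute $\langle\lam,\alpha_n\rangle=x_n-x_1=a_{n-a}-a_{1-a}=a_{n-a}-a_{n+1-a}+1$, matching the $l\in n\bb{Z}$ case of (4.1). The right-hand side of (4.1) is $n$-periodic in $l$ since both $a_{l-a}$ and $a_{l-a+1}$ shift by $+1$ under $l\mapsto l+n$, so the formula is consistent with $\alpha_{l+n}=\alpha_l$.

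For bijectivity I construct the inverse. Given $\lam=(x_1,\ldots,x_n)\in\Lam$, extend to $(x_i)_{i\in\bb{Z}}$ by $x_{i+n}=x_i+1$. In each residue class mod $n$, the values $\{x_k+j:j\in\bb{Z}\}$ exhaust $\bb{Z}$, so $Z:=\{i\in\bb{Z}:x_i=0\}$ is a finite set of $n$ elements, one per residue class; in particular $J:=\max Z$ exists. Set $a:=J-n$ and $a_j:=x_{j+a}$ for $j\in\bb{Z}$. Then $a_{j+n}=a_j+1$ and $a_n=x_J=0$. Nonnegativity of $a_1,\ldots,a_{n-1}$ comes from maximality of $J$: if $x_{J-k}<0$ for some $1\leq k\leq n-1$, then $m:=-x_{J-k}\geq 1$ gives $x_{J-k+mn}=x_{J-k}+m=0$ with $J-k+mn\geq J+1$, contradicting $J=\max Z$. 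Applying the same telescoping identity to the extended $x$-sequence, $\sum_{j=a+1}^{a+n}x_j=a$; combined with $x_{a+n}=0$ this yields $a_1+\cdots+a_{n-1}=a$, so the notation is consistent and $(a_i)\in P$.

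Finally, uniqueness of the chosen $a$: the conditions $x_{a+n}=0$ and $x_{a+i}\geq 0$ for $1\leq i\leq n-1$ force $x_{a+n+j}\geq 1$ for every $j\geq 1$, by iteratively applying $x_{i+n}=x_i+1$ (for $1\leq j\leq n-1$, $x_{a+n+j}=x_{a+j}+1\geq 1$; for $j=n$, $x_{a+2n}=x_{a+n}+1=1$; and induction in the exponent for larger $j$). Hence no $a'=a+j>a$ can satisfy $x_{a'+n}=0$, proving uniqueness. Composing forward and inverse on either side then recovers the identity, so $\lam$ is a bijection. The main subtle point is deducing the nonnegativity of $x_{J-k}$ from the maximality of $J$; the rest is bookkeeping with the recurrence $a_{i+n}=a_i+1$.
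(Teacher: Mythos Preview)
Your proof is correct and constructs the same bijection as the paper, but the presentation differs in a useful way. The paper builds the inverse $\Lam\to P$ first: given $x=(x_1,\dots,x_n)$ it locates $k=\max\{i:x_i=\min_j x_j\}$, sets $m=-x_k-1$, and writes out $(a_1,\dots,a_{n-1})$ via the piecewise formula (4.2), then checks (4.1) from (4.2). You instead write the forward map in one line as $\lam((a_i))=(a_{1-a},\dots,a_{n-a})$, which makes (4.1) immediate from $\langle\lam,\alpha_l\rangle=x_l-x_{l+1}$ and the recurrence $a_{i+n}=a_i+1$; your inverse, via extending $x$ by $x_{i+n}=x_i+1$ and taking $J=\max\{i:x_i=0\}$, is just a repackaging of the paper's $(k,m)$ data (indeed $J=k+(m+1)n$, so $a=J-n=mn+k$ agrees). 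What your formulation buys is uniformity: no case split on the position $k$, and the telescoping identity $\sum_{j=k+1}^{k+n}a_j=a+k$ handles both the zero-sum check and the consistency of the notation $a$ in one stroke. The paper's version, on the other hand, makes the concrete shape of $\lam$ in coordinates more visible, which is what gets used in the explicit computations of Lemma 4.4 and Proposition 4.5.
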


\begin{proof}
We construct the inverse of the claimed bijection by assigning for each element of $\Lam$ an $(n-1)$-tuple of nonnegative integers. Let $x=(x_{1},x_{2},\ldots ,x_{n})$ be an element of $\Lam \subset \bb{Z}^{n}$. 

For $x=(0,\ldots ,0)$, we assign $(a_{1},\ldots ,a_{n-1})=(0,\ldots ,0)$. 

We assume $x\ne (0,\ldots ,0)$. We set $$k=\max\{i\in \{1,2,\ldots ,n\} \mid x_{i}=\min_{j}\{x_{j}\}\}.$$ Since $x\ne (0,\ldots ,0)$ and $\sum_{i}x_{i}=0$, we have $x_{k}\leq -1$. We set $m=-x_{k}-1\geq 0$. We assign $a_{1},a_{2},\ldots ,a_{n-1}$ to $x$ by the following: 
\begin{align}
(x_{1},x_{2},\ldots ,x_{n})=(a_{n-k+1}-m-1,a_{n-k+2}-m-1,\ldots ,a_{n-1}-m-1,\hspace{3em}\\
-m-1,a_{1}-m,a_{2}-m,\ldots ,a_{n-k}-m),\notag
\end{align}
where $-m-1$ is in $k$-th component. By the definition of $k$, we have $a_{i}\geq 0$ for $1\leq i<n$. Also, by $\sum_{i}x_{i}=0$, we have $a=\sum_{i=1}^{n-1}a_{i}=mn+k$.

The above assignment is obviously injective. 

For surjectivity, we take an $(n-1)$-tuple of nonnegative integers $(a_{1},\ldots ,a_{n-1})$. We can assume $(a_{1},\ldots ,a_{n-1})\ne (0,\ldots ,0)$. Then we can take two integers $m$ and $k$ such that $a=\sum_{i=1}^{n-1}a_{i}=mn+k$ with $1\leq k\leq n$. We assign $x\in \Lam$ by the same equation (4.2) as above, which is well-defined by $a=mn+k$. Obviously, $x$ corresponds to $(a_{1},\ldots ,a_{n-1})$ by the above assignment. This proves the bijectivity.

Note that by convention, we have $a_{n}=0$, $a_{0}=-1$, and $a_{n+i}=a_{i}+1$. Then the relation (4.1) follows directly from (4.2). 
\end{proof}

We often abbreviate $\lam ((a_{i})_{i\in \bb{Z}})$ by $\lam (a_{i})$ or $\lam (a_{1},\ldots ,a_{n-1})$ for simplicity.

For later use, we examine a stratification on affine Grassmannian by $I$-orbits using the above parametrization of $\Lam$. The key result here is the closedness of certain unions of $I$-orbits. Let us begin with recalling the following two well-known facts.

\begin{thm}[\cite{MR1923198}, Theorem 5.1.5]
The affine Grassmannian decomposes as a disjoint union $$G(F)/G(\mca{O})=\bigsqcup _{w\in W^{\mr{f}}}IwG(\mca{O})/G(\mca{O}).$$ The closure of each cell $IwG(\mca{O})/G(\mca{O})$ is a union of cells as follows: $$\overline {IwG(\mca{O})/G(\mca{O})}=\bigsqcup _{\substack{y\leq w \\ y\in W^{\mr{f}}}}IyG(\mca{O})/G(\mca{O}).$$ Here $\leq$ is the Bruhat order on $W_{\mr{aff}}$.
\end{thm}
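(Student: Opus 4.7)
My plan is to derive the theorem from the Tits system structure on $G(F)$ combined with the projection from the affine flag variety to the affine Grassmannian. The Iwahori subgroup $I$ together with $N_{G(F)}(T(F))$ forms a BN-pair with Weyl group $W_{\mr{aff}}$, and the axioms $IsI\cdot IwI\subset IwI\cup IswI$ and $sIs\not\subset I$ for each simple reflection $s\in\{s_{i}\}_{i\in I_{\mr{aff}}}$ yield the Bruhat--Tits decomposition $G(F)=\bigsqcup_{w\in W_{\mr{aff}}}IwI$. Since $G(\mca{O})$ is the maximal parahoric corresponding to the finite Weyl group $W\subset W_{\mr{aff}}$, one has $G(\mca{O})=\bigsqcup_{w\in W}IwI$. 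Passing to the quotient yields $G(F)/G(\mca{O})=\bigsqcup_{wW\in W_{\mr{aff}}/W}IwG(\mca{O})/G(\mca{O})$, and selecting the unique minimal length representative in $W^{\mr{f}}$ from each coset gives the first claim.

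For the closure relations, I would first establish the analogous statement in the affine flag variety, $\overline{IwI/I}=\bigsqcup_{y\leq w}IyI/I$, via Bott--Samelson--Demazure resolutions. Given a reduced expression $w=s_{i_{1}}\cdots s_{i_{k}}$, the variety $Z_{w}=\hat{P}_{i_{1}}\times^{I}\hat{P}_{i_{2}}\times^{I}\cdots\times^{I}\hat{P}_{i_{k}}/I$ is a smooth iterated $\mathbb{P}^{1}$-bundle of dimension $\ell(w)$, and the natural multiplication map $Z_{w}\to\hat{\mca{B}}$ is proper with image $\overline{IwI/I}$. Images of boundary strata of $Z_{w}$ are indexed by subwords of the chosen reduced expression, and via the subword characterization of the Bruhat order (equivalently, the exchange condition for Coxeter groups) one sees that these images are exactly the cells $IyI/I$ with $y\leq w$.

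To transfer the closure relations to the Grassmannian, I use that $\pi:\hat{\mca{B}}\to X$ is a Zariski-locally trivial fibration with fibers $G(\mca{O})/I\cong G/B$, hence proper when restricted to any finite dimensional Schubert variety. Therefore $\pi(\overline{IwI/I})=\overline{IwG(\mca{O})/G(\mca{O})}$, and this image is the union of the cells $IyG(\mca{O})/G(\mca{O})$ for $y\leq w$. To rewrite the indexing set using $W^{\mr{f}}$, it remains to verify that for each $y\leq w$ with $w\in W^{\mr{f}}$, the unique minimal length representative $y'\in W^{\mr{f}}$ of the coset $yW$ also satisfies $y'\leq w$. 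This follows from the standard Coxeter-theoretic fact that $y'\leq y$ (applying the lifting property iteratively when reducing $y$ modulo $W$), combined with transitivity of the Bruhat order.

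The main obstacle is making the closure relations in the affine flag variety rigorous within the ind-scheme framework, since a priori the closure of an Iwahori orbit could involve cells from the boundary of the ind-structure. The resolution is that the properness of $Z_{w}\to\hat{\mca{B}}$ shows $\overline{IwI/I}$ is a genuinely finite-dimensional projective variety, so that the topological closure can be computed stratum by stratum inside a finite union of Iwahori orbits. Once this is set up, the rest of the argument reduces to combinatorics of the affine Coxeter group $W_{\mr{aff}}$.
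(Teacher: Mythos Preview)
The paper does not give its own proof of this statement: it is quoted verbatim as a well-known fact from \cite{MR1923198}, Theorem~5.1.5, and no argument is supplied. There is therefore nothing in the paper to compare your proposal against.

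That said, your outline is correct and is essentially the standard proof one finds in Kumar's book or in the Kac--Moody literature. The BN-pair axioms for $(G(F),I,N_{G(F)}(T(F)))$ give the Bruhat decomposition of $\hat{\mca{B}}$, and reduction modulo $G(\mca{O})$ together with the choice of minimal coset representatives yields the first assertion. For the closure relations, Bott--Samelson--Demazure resolutions combined with the subword characterization of Bruhat order is exactly the right tool; projectivity of $\overline{IwI/I}$ handles the ind-scheme issue you flagged. The transfer to $X$ via the proper map $\pi$ is clean, and the Coxeter-theoretic fact you use at the end---that the minimal representative $y'\in W^{\mr{f}}$ of $yW$ satisfies $y'\leq y$---follows from the length-additive factorization $\ell(y)=\ell(y')+\ell((y')^{-1}y)$, which in turn follows from iterated application of the exchange/lifting property as you say.
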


\begin{prop}[Length formula]
Let $\lam =(\lambda_{1},\ldots ,\lambda_{n})\in \Lam \subset \bb{Z}^{n}$ be an element of $\Lam$ considered as an element of $\bb{Z}^{n}$, and $w\in \mf{S}_{n}=W$ an element of the symmetric group. Then the length of $t_{\lam}w\in W_{\mr{aff}}$ is expressed as follows: $$\ell (t_{\lam}w)=\sum _{\substack{1\leq i<j\leq n \\ w^{-1}(i)<w^{-1}(j)}}|\lambda_{j}-\lambda_{i}| +\sum _{\substack{1\leq i<j\leq n \\ w^{-1}(i)>w^{-1}(j)}}|\lambda_{j}-\lambda_{i}-1|.$$
\end{prop}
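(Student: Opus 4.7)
The plan is to compute $\ell(t_\lambda w)$ via the standard affine inversion count
\[\ell(t_\lambda w) = \#\{\beta \in \hat\Delta^+ : (t_\lambda w)\beta \in \hat\Delta^-\},\]
where $\hat\Delta = \{\alpha + k\delta : \alpha \in \Delta,\ k \in \bb{Z}\}$ is the real affine root system of $\mf{sl}_{n}$, with $\alpha + k\delta$ positive iff $k > 0$ or ($k = 0$ and $\alpha \in \Delta^+$).

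First I would record the $W_{\mr{aff}}$-action on affine roots, namely $w(\alpha + k\delta) = w\alpha + k\delta$ for $w \in W$ and $t_\mu(\alpha + k\delta) = \alpha + (k + \langle\mu,\alpha\rangle)\delta$ for $\mu \in \Lam$, so that
\[(t_\lambda w)(\alpha + k\delta) = w\alpha + (k + \langle\lambda, w\alpha\rangle)\delta.\]
Then, for each finite root $\gamma \in \Delta$, I would set $\alpha = w^{-1}\gamma$ and enumerate those integers $k$ for which $\alpha + k\delta$ is positive but its image is negative. This splits into four subcases according to the signs of $\alpha$ and $\gamma$; a direct count, using the positivity range $k \geq 0$ (if $\alpha \in \Delta^+$) or $k \geq 1$ (if $\alpha \in \Delta^-$) together with the negativity condition on $k + \langle\lambda,\gamma\rangle$, yields four $\max(0,\cdot)$ expressions in $\langle\lambda,\gamma\rangle$.

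Next I would pair each positive $\gamma$ with $-\gamma$ and collapse the two contributions: when $w^{-1}\gamma \in \Delta^+$ the sum simplifies to $|\langle\lambda,\gamma\rangle|$, and when $w^{-1}\gamma \in \Delta^-$ it simplifies to $|\langle\lambda,\gamma\rangle + 1|$. Specializing to type $A_{n-1}$, where $\Delta^+ = \{\epsilon_i - \epsilon_j : i < j\}$, $\langle\lambda,\epsilon_i - \epsilon_j\rangle = \lambda_i - \lambda_j$, and $w^{-1}(\epsilon_i - \epsilon_j) \in \Delta^+$ iff $w^{-1}(i) < w^{-1}(j)$, and then using the identity $|\lambda_i - \lambda_j + 1| = |\lambda_j - \lambda_i - 1|$, recovers the stated formula.

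The main obstacle is the bookkeeping around sign conventions: one must fix compatible choices for the positivity of affine roots and for the sign appearing in the $t_\mu$-action so that the asymmetric contribution comes out as $|\lambda_j - \lambda_i - 1|$ rather than $|\lambda_j - \lambda_i + 1|$. Once those conventions are pinned down, the four-case enumeration and the $\gamma \leftrightarrow -\gamma$ pair-up step are mechanical.
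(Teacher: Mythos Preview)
The paper does not actually prove this proposition: it is stated as one of ``two well-known facts'' (alongside the Bruhat decomposition of the affine Grassmannian) and is used without proof in the argument for Proposition~4.5. So there is no paper proof to compare against.

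Your outlined argument is the standard derivation of the Iwahori--Matsumoto length formula via the affine inversion count, and it is correct. The four-case enumeration over $(\alpha,k)$ and the pairing $\gamma\leftrightarrow -\gamma$ are exactly how this is usually done. Your own caveat about sign conventions is the only genuine hazard: depending on whether one takes $t_{\mu}(\alpha+k\delta)=\alpha+(k\pm\langle\mu,\alpha\rangle)\delta$ and on whether one counts $\{\beta>0:x\beta<0\}$ or $\{\beta>0:x^{-1}\beta<0\}$, the asymmetric term comes out as $|\lambda_j-\lambda_i-1|$ or $|\lambda_j-\lambda_i+1|$. To match the paper's statement you should fix the conventions so that the translation part acts as $t_{\lam}(\alpha+k\delta)=\alpha+(k-\langle\lam,\alpha\rangle)\delta$ and count positive affine roots made negative by $(t_{\lam}w)^{-1}$ (equivalently, use the other sign in the action and count roots made negative by $t_{\lam}w$); a quick sanity check on a single simple reflection $s_i$ (where the formula must give~$1$) will confirm you have the signs aligned.
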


The following easy lemma describes the $W_{\mr{aff}}$-action on $\Lam$ in terms of $P$. Recall that we use cyclic notation of index for $s_{i}$ and $a=a_{1}+\cdots +a_{n-1}$.

\begin{lem}
Let $(a_{i})_{i\in \bb{Z}}$ be an element of $P$. Then the actions of simple reflections $s_{i}$ on $\lam (a_{i})$ is expressed as follows:
\begin{equation*}
s_{a+l}\cdot \lam (a_{1},\ldots ,a_{n-1})=
\begin{cases}
\lam (a_{1},\ldots ,a_{l+1},a_{l},\ldots ,a_{n-1}) & \mbox{if }l=1,2,\ldots ,n-2,\\
\lam (a_{n-1}-1,a_{1},\ldots ,a_{n-2}) & \mbox{if }l=n-1\mbox{ and }a_{n-1}\geq 1, \\
\lam (a_{1},\ldots ,a_{n-1}) & \mbox{if }l=n-1\mbox{ and }a_{n-1}=0, \\
\lam (a_{2},\ldots ,a_{n-1},a_{1}+1) & \mbox{if }l=0.
\end{cases}
\end{equation*}
\end{lem}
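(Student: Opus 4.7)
The plan is to verify each case by writing $s_{a+l}\cdot\lam(a_{i})=\lam(a_{i})-c\al_{a+l}$ for an explicit coefficient $c$ and then showing that the claimed tuple $\sigma_{l}(a_{i})$ satisfies $\lam(\sigma_{l}(a_{i}))=\lam(a_{i})-c\al_{a+l}$. The first step uses the action of $W_{\mr{aff}}$ on $\Lam$:
$$s_{j}\cdot\lam=\begin{cases}\lam-\langle\lam,\alpha_{j}\rangle\al_{j}&j\notin n\bb{Z},\\ \lam-(\langle\lam,\alpha_{j}\rangle-1)\al_{j}&j\in n\bb{Z},\end{cases}$$
under the cyclic conventions $\alpha_{n}=-\theta$, $\al_{n}=-\sum_{i=1}^{n-1}\al_{i}$ adopted in the paper. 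Combined with formula (4.1), the $+1$ appearing for $j\in n\bb{Z}$ exactly cancels the $-1$ in the affine-reflection formula, so $c=c_{a+l}$ admits a single uniform expression across the finite and affine subcases:
$$c=\begin{cases}a_{l}-a_{l+1}&l=1,\ldots,n-2,\\ a_{n-1}&l=n-1,\\ -1-a_{1}&l=0,\end{cases}$$
under the conventions $a_{0}=-1$, $a_{n}=0$.

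For the second step I compare pairings against simple roots: since $\Lam$ is the coroot lattice of $SL_{n}$, an element of $\Lam$ is determined by $\langle\cdot,\alpha_{j}\rangle$ for $j=1,\ldots,n-1$. Applying formula (4.1) to both $\lam(a_{i})$ and $\lam(\sigma_{l}(a_{i}))$, the $[j\in n\bb{Z}]$ contributions cancel. A direct computation, using the cyclic extension of the sequences involved (the sum $a$ changes by $0$, $-1$, or $+1$ depending on the case), shows that the pairing difference is supported on three consecutive cyclic positions about $j\equiv a+l\pmod{n}$, with values $-c,2c,-c$. These match $c\langle\al_{a+l},\alpha_{j}\rangle$ via the extended affine Cartan matrix of $SL_{n}$, proving $\lam(a_{i})-\lam(\sigma_{l}(a_{i}))=c\al_{a+l}$.

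The subcase $l=n-1$ with $a_{n-1}=0$ is immediate because then $c=0$. The only genuine subtlety is at the boundary subcases $a+l\in n\bb{Z}$, where $\al_{a+l}=-\check{\theta}$ and the Cartan pairings wrap around the affine Dynkin diagram; the uniform formula for $c$ above absorbs this wrap-around, so the three-term telescoping computation goes through without modification.
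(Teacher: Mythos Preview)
Your argument is correct and complete in outline; the uniform formula $c=a_{l}-a_{l+1}$ (with the conventions $a_{0}=-1$, $a_{n}=0$) is exactly right, and the three-term pattern $(-c,2c,-c)$ in the pairing difference does match $c\langle\al_{a+l},\alpha_{j}\rangle$ via the affine type $A$ Cartan matrix, including the wrap-around cases.

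However, your route is genuinely different from the paper's. The paper works directly with the explicit coordinate description (4.2) of $\lam(a_{i})$ in $\bb{Z}^{n}$: it applies the simple reflection as a coordinate permutation (or, for the affine node, as a permutation composed with a translation) and then re-identifies the resulting vector by locating the position and value of its minimum entry, which is how the inverse of $\lam$ was constructed in Proposition~4.1. You instead never touch (4.2) and work entirely through the pairing characterization (4.1), reducing the verification to a Cartan-matrix identity. The paper's approach is more concrete and visual---one literally watches coordinates move---but requires tracking how the index $k$ of the minimum shifts in each case. Your approach is more uniform and conceptual: the single observation that the $+1$ in (4.1) for $l\in n\bb{Z}$ cancels the affine translation in $s_{0}$ replaces all of that tracking, and the remaining computation is the same telescoping sum in every case, with the shift in $a$ (by $0$, $-1$, or $+1$) absorbed into the index shift in (4.1). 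Either method suffices; yours arguably scales better if one wanted to generalize beyond type $A$.
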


\begin{proof}
This can be seen by (4.2). For example, we have $$s_{a-1}\cdot \lam (a_{i})=(a_{n-k+1}-m-1,\ldots ,a_{n-2}-m-1,-m-1,a_{n-1}-m-1,a_{1}-m,\ldots ,a_{n-k}-m).$$ 

If $a_{n-1}=0$, this equals to $\lam(a_{i})$. 

If $a_{n-1}\geq 1$, then $\max\{i\in \{1,2,\ldots ,n\} \mid x_{i}=\min_{j}\{x_{j}\}\}$ decreases by one. By (4.2) where we replace $k$ by $k-1$ (in case of $k=1$, we replace $k$ by $n$ and $m$ by $m-1$), we find that $ s_{a-1}\cdot \lam (a_{1},\ldots ,a_{n-1})=\lam (a_{n-1}-1,a_{1},\ldots ,a_{n-2})$. 

The other relations are checked in the same way.
\end{proof}

In order to investigate the closure relation of $I$-orbits on affine Grassmannian, we need to understand the Bruhat order on $W^{\mr{f}}$. The next proposition gives an algorithm for calculating reduced expression of elements of $W^{\mr{f}}$.

\begin{prop}
Let $(a_{i})_{i\in \bb{Z}}$ be an nonzero element of $P$. Let $$l=\mr{max}\left\{ i\in \left\{ 1,\ldots  ,n-1\right\} \mid a_{i}\geq 1\right\}.$$ Then we have 
\begin{align*}
\ell ^{\mr{f}}\left( s_{a+l}\cdot \lam (a_{1},\ldots ,a_{n-1})\right) &=\ell ^{\mr{f}}\left( \lam (a_{1},\ldots ,a_{n-1})\right) -1,\\
\ell ^{\mr{f}}\left( s_{a}\cdot \lam (a_{1},\ldots ,a_{n-1})\right) &=\ell ^{\mr{f}}\left( \lam (a_{1},\ldots ,a_{n-1})\right) +1.
\end{align*}
\end{prop}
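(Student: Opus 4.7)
The plan is to derive an explicit closed-form expression for $\ell^{\mr{f}}(\lambda)$ using Proposition 4.3, and then to apply Lemma 4.4 together with the formula (4.2) to compare $\ell^{\mr{f}}(\lambda(a_i))$ with $\ell^{\mr{f}}(s_{a+l}\cdot \lambda(a_i))$ and $\ell^{\mr{f}}(s_a\cdot \lambda(a_i))$.

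First, I would exhibit a minimal-length representative in the coset $t_\lambda W$ as follows. For each pair $i<j$, a direct comparison shows that the smaller of the two candidate contributions $|\lambda_j-\lambda_i|$ and $|\lambda_j-\lambda_i-1|$ in Proposition 4.3 is the first one exactly when $\lambda_i\geq\lambda_j$. Any $w\in W$ realizing the stable weakly-decreasing rearrangement of $(\lambda_1,\ldots,\lambda_n)$ simultaneously achieves the preferred choice for every pair, giving
\[
\ell^{\mr{f}}(\lambda) \;=\; \sum_{i<j}|\lambda_i - \lambda_j|\;-\;\#\{i<j\mid \lambda_i < \lambda_j\}.
\]

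Second, I would use Lemma 4.4 to write the image $\lambda'=s_{a+l}\cdot\lambda$ (or $s_a\cdot\lambda$) as $\lambda(a'_i)$ for the prescribed $(a'_i)$, and then apply (4.2) to read off the coordinates of $\lambda$ and $\lambda'$ in $\bb{Z}^n$. In the generic subcase of the first claim ($1\leq l\leq n-2$ with no wraparound), the two coordinates of $\lambda$ affected by the swap are adjacent in $\bb{Z}^n$, take values $(a_l-m,-m)$ with $a_l-m>-m$ since $a_l\geq 1$, and become $(-m,a_l-m)$ after the swap. The multiset $\{\lambda_1,\ldots,\lambda_n\}$ is unchanged, and for any third index $q$ the contributions to the count $\#\{i<j\mid \lambda_i<\lambda_j\}$ from the pair $(q,p)$ and $(q,p+1)$ (respectively $(p,q)$ and $(p+1,q)$) move in opposite directions by equal amounts. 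Only the direct pair $(p,p+1)$ therefore contributes a net $+1$ to the count, and the closed formula yields $\ell^{\mr{f}}(\lambda')=\ell^{\mr{f}}(\lambda)-1$. The wraparound subcase $l+k=n$, the boundary case $l=n-1$, and the $s_a$-case (a cyclic shift with $a_1$ incremented by $1$) are handled by the same template but with separately-tracked offsets arising from the special $-m-1$ entry and the shift of $(m,k)$ in (4.2).

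The main obstacle is the bookkeeping in the wraparound subcases, where the modified coordinates of $\lambda$ are no longer adjacent in $\bb{Z}^n$ and the special $-m-1$ from (4.2) intervenes between them. Verifying that the cross-contributions from intermediate indices still cancel in pairs requires a delicate case split on the sign of each $\lambda_q$ relative to the thresholds $-m-1$, $-m$, $a_l-m-1$, and $a_l-m$, and one must check that any residual change in $\sum|\lambda_i-\lambda_j|$ is exactly compensated by a corresponding change in $\mr{inv}^+$. Once this is done, the conclusions $\ell^{\mr{f}}(s_{a+l}\cdot\lambda)=\ell^{\mr{f}}(\lambda)-1$ and $\ell^{\mr{f}}(s_a\cdot\lambda)=\ell^{\mr{f}}(\lambda)+1$ fall out directly.
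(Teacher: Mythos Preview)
Your proposal is correct and follows essentially the same route as the paper: derive the closed formula
\[
\ell^{\mr{f}}(\lambda)=\sum_{i<j}|\lambda_i-\lambda_j|-\#\{i<j\mid \lambda_i<\lambda_j\}
\]
from Proposition~4.3, then split into the adjacent-swap case $l\neq n-k$ and the wraparound case $l=n-k$.

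One simplification worth noting: in the wraparound case $l=n-k$ you anticipate a ``delicate case split on the sign of each $\lambda_q$ relative to the thresholds $-m-1$, $-m$, $a_l-m-1$, $a_l-m$.'' The paper avoids this entirely by observing that when $l=n-k$, maximality of $l$ forces $a_{l+1}=\cdots=a_{n-1}=0$, so (4.2) reduces to the clean form
\[
\lambda(a_i)=(-m-1,\ldots,-m-1,a_1-m,\ldots,a_l-m),
\]
and $s_{a+l}\cdot\lambda(a_i)$ has an equally explicit form. One then computes $\ell^{\mr{f}}$ for both directly from the closed formula and subtracts; the cross-terms line up without any threshold analysis. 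Also, in your generic subcase you wrote the swapped values as $(a_l-m,-m)$, which tacitly assumes $l<n-k$; when $l>n-k$ the swapped adjacent values are $(a_l-m-1,-m-1)$, but the argument is of course identical.
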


\begin{proof}
We first remark that by the length formula, in order to minimize $\ell (t_{\lam}w)$ (with $\lam=(\la_{1},\ldots ,\la_{n})\in \Lam$ fixed) by varying $w\in \mf{S}_{n}$, we should have 
\begin{align*}
\mbox{if $i<j$ and $\la_{j}-\la_{i}\leq 0$, then }w^{-1}(i)<w^{-1}(j), \mbox{ and}\\
\mbox{if $i<j$ and $\la_{j}-\la_{i}\geq 1$, then }w^{-1}(i)>w^{-1}(j).
\end{align*}
Hence we have 
\begin{equation}
\ell^{\mr{f}}(\lam)=\sum_{i<j}|\la_{j}-\la_{i}|-\#\{(i,j)\mid i<j, \la_{j}-\la_{i}\geq 1\}.
\end{equation}

Recall that in the notations of Proposition 4.1, we have $$\lam(a_{i})=(a_{n-k+1}-m-1,\ldots ,a_{n-1}-m-1,-m-1,a_{1}-m,\ldots ,a_{n-k}-m).$$ 

In the case of $l\ne n-k$, if we apply $s_{a+l}$ to $\lam(a_{i})$, the number of pairs $(i,j)$ with $i<j$ and $\la_{j}-\la_{i}\geq 1$ increases by one while $\sum_{i<j}|\la_{j}-\la_{i}|$ does not change. Hence we have $\ell^{\mr{f}}(s_{a+l}\cdot\lam(a_{i}))=\ell^{\mr{f}}(\lam(a_{i}))-1$. 

Therefore, we can assume $l=n-k$. Then we have 
\begin{eqnarray*}
\lam(a_{i})=&(-m-1,\ldots,-m-1,a_{1}-m,\ldots,a_{l}-m),\\
s_{a+l}\cdot \lam(a_{i})=&(a_{l}-m-1,-m-1,\ldots,-m-1,a_{1}-m,\ldots,a_{l-1}-m,-m).
\end{eqnarray*}

By (4.3), we have $$\ell^{\mr{f}}(\lam(a_{i}))=(n-l)(a_{1}+\cdots +a_{l})+\sum_{1\leq i<j\leq l}|a_{j}-a_{i}|-\#\{(i,j)\mid 1\leq i<j\leq l,a_{j}-a_{i}\geq 1\},$$ and 
\begin{align*}
\ell^{\mr{f}}(s_{a+l}\cdot\lam (a_{i}))& =(n-l-1)a_{l}+|1-a_{l}|+(a_{1}+\cdots +a_{l-1})\\
& \hspace{1em}+(n-l-1)(a_{1}+\cdots +a_{l-1})+(n-l-1)(|-m+m+1|-1)\hspace{4em}\\
& \hspace{1em}+\left(\sum_{1\leq i\leq l-1}|a_{i}-a_{l}+1|-\# \{1\leq i\leq l-1\mid a_{i}-a_{l}+1\geq 1\}\right)\hspace{3em}\\
& \hspace{1em}+\left(\sum_{1\leq i<j\leq l-1}|a_{j}-a_{i}|-\# \{ (i,j)\mid 1\leq i<j\leq l-1,a_{j}-a_{i}\geq 1\}\right)\hspace{2em}\\
& =(n-l)(a_{1}+\cdots +a_{l})-1+\sum_{1\leq i<j\leq l}|a_{j}-a_{i}|-\# \{ (i,j)\mid 1\leq i<j\leq l,a_{j}-a_{i}\geq 1\}\\
& =\ell^{\mr{f}}(\lam(a_{i}))-1.\hspace{8em}
\end{align*}
Here, in the first equality, the first part $(n-l-1)a_{l}$ is the contributions of pairs $(a_{l}-m-1,-m-1)$. The second part is the contribution of $(a_{l}-m-1,-m)$. The third part is the contributions of $(a_{i}-m,-m)$. The fourth part is the contributions of $(-m-1,a_{i}-m)$. The fifth part is the contributions of $(-m-1,-m)$. The sixth part is the contributions of $(a_{l}-m-1,a_{i}-m)$. The seventh part is the contributions of $(a_{i}-m,a_{j}-m)$. 

This proves first part of the proposition. 

Second part is proved similarly.
\end{proof}

Let $w_{\lam}$ be the element of $W=\mf{S}_{n}$ such that $\ell(t_{\lam}w_{\lam})=\ell^{\mr{f}}(\lam)$, that is, $t_{\lam}w_{\lam}\in W^{\mr{f}}$. Note that by the proof of the above proposition, the $w_{\lam}$ is the minimum of $w\in \mf{S}_{n}$ such that $w^{-1}\cdot \lam$ is antidominant. 

As a corollary of the above proposition, we get a stratification on the affine Grassmannian which will produce the desired stratification of affine Springer fibers.  

\begin{cor}
Let $c$ be a positive integer. Then the following union of $I$-orbits in the affine Grassmannian $$X_{\leq c}:=\bigsqcup _{\substack{(a_{i})\in P \\ a(a_{i})\leq c}}I\epsilon ^{\lam (a_{i})}G(\mca{O})/G(\mca{O}) \subset G(F)/G(\mca{O})$$ is closed.
\end{cor}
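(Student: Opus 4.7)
The plan is to use Theorem 4.2 to reduce the closedness of $X_{\leq c}$ to a combinatorial monotonicity of the function $a$ under the Bruhat order on $W^{\mathrm{f}}$. Since $X_{\leq c}$ is the union of the finitely many Iwahori orbits $I\epsilon^{\lambda(a_i)}G(\mathcal{O})/G(\mathcal{O})$ with $a(a_i)\leq c$, and since by Theorem 4.2 the closure of each such orbit is itself the union of those orbits indexed by $y\in W^{\mathrm{f}}$ satisfying $y\leq t_{\lambda(a_i)}w_{\lambda(a_i)}$ in the Bruhat order of $W_{\mathrm{aff}}$, it suffices to prove the following combinatorial claim: for $(a_i),(b_i)\in P$ with $t_{\lambda(b_i)}w_{\lambda(b_i)}\leq t_{\lambda(a_i)}w_{\lambda(a_i)}$, one has $a(b_i)\leq a(a_i)$.

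I would prove this claim by induction on $\ell^{\mathrm{f}}(\lambda(a_i))$, the base case $(a_i)=0$ being immediate. For the inductive step, set $u := t_{\lambda(a_i)}w_{\lambda(a_i)}$ and $l := \max\{i : a_i\geq 1\}$, and let $s := s_{a+l}$ be the simple reflection furnished by Proposition 4.5, so that $\lambda(a'_i) := s\cdot\lambda(a_i)$ satisfies $\ell^{\mathrm{f}}(\lambda(a'_i)) = \ell^{\mathrm{f}}(\lambda(a_i))-1$. By Lemma 4.4, $a(a'_i)=a(a_i)$ when $l<n-1$, and $a(a'_i)=a(a_i)-1$ when $l=n-1$. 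A preliminary observation is that $s$ is a left descent of $u$ with $su = u' := t_{\lambda(a'_i)}w_{\lambda(a'_i)}$: iterating the Proposition 4.5 reduction all the way down to $0$ produces a word $s_{j_1}s_{j_2}\cdots s_{j_N}$ (with $j_1 = a+l$) of the correct length $N=\ell^{\mathrm{f}}(\lambda(a_i))$ that multiplies to $u$, hence is a reduced expression.

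Given the preliminary observation, the standard Lifting Property of Bruhat order applies: for any $v\leq u$ with $v\in W^{\mathrm{f}}$, either (i) $sv>v$, in which case $v\leq u'$ and the inductive hypothesis gives $a(v)\leq a(a'_i)\leq a(a_i)$; or (ii) $sv<v$, in which case $sv\in W^{\mathrm{f}}$ and $sv\leq u'$, so induction yields $a(sv)\leq a(a'_i)$. In case (ii), $s$ being a left descent of $v$ rules out the $l_v = 0$ action (which by Proposition 4.5 \emph{raises} $\ell^{\mathrm{f}}$) and the trivial actions, leaving either the swap case with $a(v)=a(sv)$ (from which $a(v)\leq a(a_i)$ is immediate) or the shift case $l_v = n-1$ with $a^{(v)}_{n-1}\geq 1$ and $a(v)=a(sv)+1$. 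The shift subcase yields $a(v)\leq a(a_i)$ at once when $l=n-1$ for $u$; the genuinely delicate subcase is when $l<n-1$ for $u$, where equating the simple-reflection index of $s$ read from both sides, $a(a_i)+l\equiv a(v)+n-1\pmod{n}$, forces $a(v)\equiv a(a_i)+l+1\pmod{n}$ with $l+1\in\{2,\ldots,n-1\}$; combined with the naive bound $a(v)\leq a(sv)+1\leq a(a'_i)+1 = a(a_i)+1$, this rules out both $a(v)=a(a_i)$ and $a(v)=a(a_i)+1$, whence in fact $a(v)\leq a(a_i)-1$.

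The main obstacle I anticipate is the congruence argument in that final subcase, which requires carefully identifying the residue class of the simple-reflection index modulo $n$ from both the $u$ and $v$ perspectives. A secondary, largely bookkeeping, point is the preliminary observation that iterating Proposition 4.5 produces a reduced expression of $u$ (equivalently, that $s_{a+l}$ is always a left descent of $u$); this should follow from a direct comparison of the length count built into Proposition 4.5's proof with the $W_{\mathrm{aff}}$-length formula of Proposition 4.3.
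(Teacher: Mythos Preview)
Your argument is correct and reaches the same combinatorial monotonicity claim as the paper, but by a genuinely different route. The paper introduces the cyclic words $\mathrm{cyc}_j = s_{j-1}\cdots s_1 s_0$ and proves the characterization $a(a_i) = \max\{j : t_{\check\lambda(a_i)}w_{\check\lambda(a_i)} \geq \mathrm{cyc}_j\}$ by tracking, along a fixed reduced expression of $u$, how the statistic $a$ evolves step by step (using Lemma~4.4 to compute the change and Proposition~4.5 to exclude bad patterns); monotonicity under Bruhat order is then immediate from this characterization. You instead run an induction on $\ell^{\mathrm{f}}$ together with the Lifting Property, and handle the one delicate subcase with a congruence modulo $n$ on the simple-reflection index. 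Both proofs rest on the same two inputs (Lemma~4.4 and Proposition~4.5), so neither is more elementary in substance; the paper's version buys a clean Bruhat-theoretic interpretation of $a$, while yours avoids the auxiliary elements $\mathrm{cyc}_j$ at the cost of the congruence case analysis.

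One small remark on your bookkeeping point: the ``preliminary observation'' that $su=u'$ (equivalently, that iterating Proposition~4.5 yields a reduced expression of $u$) is justified not via Proposition~4.3 but via the standard parabolic fact that for $u'\in W^{\mathrm{f}}$ and a simple reflection $s$, either $su'\in W^{\mathrm{f}}$ or $su'$ lies in the same $W$-coset as $u'$; since here $s\cdot\check\lambda(a'_i)=\check\lambda(a_i)\neq\check\lambda(a'_i)$, the second alternative is excluded, so $su'\in W^{\mathrm{f}}$ and hence equals $u$ by uniqueness of minimal representatives. The same fact is what guarantees $sv\in W^{\mathrm{f}}$ in your case~(ii).
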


\begin{proof}
For each nonnegative integer $i$, let $\mr{cyc}_{i}=s_{i-1}s_{i-2}\cdots s_{1}s_{0}\in W_{\mr{aff}}$ be a cyclic product of $i$ simple reflections. 

We claim that $a=\max\{j\mid t_{\lam(a_{i})}w_{\lam(a_{i})}\geq \mr{cyc}_{j}\}$. If we assume the claim, then it becomes obvious that $t_{\lam(a_{i})}w_{\lam(a_{i})}\leq t_{\lam(a'_{i})}w_{\lam(a'_{i})}$ implies $a\leq a'=\sum_{i=1}^{n-1}a'_{i}$ for any two elements $(a_{i})$, $(a'_{i})$ of $P$. This implies the corollary.

Now we prove the claim. Let $t_{\lam(a_{i})}w_{\lam(a_{i})}=s_{i_{1}}s_{i_{2}}\cdots s_{i_{m}}$ be a reduced expression of an element of $W^{\mr{f}}$, which we label by $\mathbf{i}=(i_{1},i_{2},\ldots,i_{m})$. Let $$b=b_{\mathbf{i}}=\max\{k\in\bb{N}\mid\mr{cyc}_{k}\mbox{ is a subword of }s_{i_{1}}s_{i_{2}}\cdots s_{i_{m}}\}.$$ Note that we have $s_{i_{p}}s_{i_{p-1}}\cdots s_{i_{1}}t_{\lam(a_{i})}w_{\lam(a_{i})}\in W^{\mr{f}}$. It is enough to prove $a=b_{\mathbf{i}}$ for any reduced expression $\mathbf{i}$.

We define $r_{0},r_{1},\ldots,r_{b}$ inductively as 
\begin{align*}
r_{0}&=m+1,\\
r_{k}&=\max \{ j\mid j<r_{k-1}\mbox{ and }s_{i_{j}}=s_{k-1}\}\mbox{ for }0<k\leq b,
\end{align*}
and we set $r_{b+1}=0$.

Let $q:W_{\mr{aff}}=\Lam \rtimes W\rightarrow \Lam$ be the natural projection. Let $(a_{i}^{(p)})\in P$ be the element determined by $q(s_{i_{p}}s_{i_{p-1}}\cdots s_{i_{1}}\cdot t_{\lam(a_{i})}w_{\lam(a_{i})})=\lam(a_{i}^{(p)})$ and $a^{(p)}=a(a_{i}^{(p)})$. Note that we have $\lam(a_{i}^{(p+1)})=s_{i_{p+1}}\cdot\lam(a_{i}^{(p)})$.

We prove by induction on $k$ that $a^{(r_{k+1})}=a^{(r_{k+1}+1)}=\cdots=a^{(r_{k}-1)}=k$. This is obvious for $k=0$. By induction hypothesis, we have $a^{(r_{k})}=k-1$. By Lemma 4.4 for $l=0$ and the equality $\lam(a_{i}^{(r_{k}-1)})=s_{i_{r_{k}}}\cdot\lam(a_{i}^{(r_{k})})=s_{k-1}\cdot\lam(a_{i}^{(r_{k})})$, we have $a^{(r_{k}-1)}=k$. For $r_{k+1}\leq p\leq r_{k}-1$, we show by descending induction on $p$ that $a^{(p)}=k$. This is already proved for $p=r_{k}-1$. By the choice of $r_{k+1}$, we have $s_{i_{p+1}}\neq s_{k}$. We claim that $s_{i_{p+1}}\neq s_{k-1}$. Indeed, assume that we have $s_{i_{p+1}}=s_{k-1}$. If $a_{n-1}^{(p+1)}\geq 1$, then we have by $a^{(p+1)}=k$ and Proposition 4.5 (in this case, we have $l=n-1$ in the notation of Proposition 4.5), $\ell^{\mr{f}}(s_{i_{p+1}}\cdot\lam(a_{i}^{(p+1)}))=\ell^{\mr{f}}(\lam(a_{i}^{(p+1)}))-1$. If $a_{n-1}^{(p+1)}=0$, then we have $s_{i_{p+1}}\cdot\lam(a_{i}^{(p+1)})=\lam(a_{i}^{(p+1)})$. In both cases, these equality contradict to the equality
\begin{align*}
\ell^{\mr{f}}(s_{i_{p+1}}\cdot\lam(a_{i}^{(p+1)}))&=\ell^{\mr{f}}(\lam(a_{i}^{(p)}))\\
                                                  &=\ell (s_{i_{p}}s_{i_{p-1}}\cdots s_{i_{1}}\cdot t_{\lam (a_{i})}w_{\lam (a_{i})})\\
                                                  &=\ell (s_{i_{p+1}}s_{i_{p}}\cdots s_{i_{1}}\cdot t_{\lam (a_{i})}w_{\lam (a_{i})})+1\\
                                                  &=\ell^{\mr{f}}(\lam(a_{i}^{(p+1)}))+1.
\end{align*}
Therefore, we have $s_{i_{p+1}}\neq s_{k},s_{k-1}$. Then, by looking at Lemma 4.4, we find that the action of $s_{i_{p+1}}$ does not change the value of $a$. This proves $a^{(p)}=a^{(p+1)}=k$. Hence we have $a^{(r_{k+1})}=a^{(r_{k+1}+1)}=\cdots=a^{(r_{k}-1)}=k$ for $k=0,1,\ldots, b$. In particular, we have $a=a^{(0)}=a^{(r_{b+1})}=b$.

\end{proof}

\subsection{Combinatorial descriptions of affine pavings}\mbox{}

Recall that by Theorem 2.7, intersections of $I$-orbits on the affine flag variety and the affine Springer fiber are affine spaces if they are nonempty. In this section, we give a combinatorial criterion for which intersections are nonempty and a combinatorial formula for their dimensions. We set $C_{\lam(a_{i})}=I\epsilon^{\lam(a_{i})}G(\mca{O})/G(\mca{O})\cap X_{v}$.

\subsubsection*{The case of the affine Grassmanian}\mbox{}

Following proposition determines which $I$-orbit intersects with $X_{v}$. 

\begin{prop}
Let $(a_{i})$ be an element of $P$. Then the following two conditions are equivalent:
\begin{enumerate}
\item $C_{\lam(a_{i})}\neq\emptyset$,
\item for any $k\in \bb{Z}$, $a_{k}-a_{k+b}\leq m$.
\end{enumerate}
\end{prop}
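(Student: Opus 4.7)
The plan is to reduce the non-emptiness of $C_{\lambda(a_i)}$ to the condition that the $T$-fixed point $\epsilon^{\lambda(a_i)} \cdot G(\mathcal{O})$ itself lies in $X_v$, and then to translate that condition into the claimed combinatorial inequality.

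\emph{Step 1 (the combinatorial translation).} Since $\mr{Ad}(\epsilon^{-\lambda})(\epsilon^k e_\alpha) = \epsilon^{k - \langle \lambda, \alpha \rangle} e_\alpha$, the condition $\mr{Ad}(\epsilon^{-\lambda(a_i)})(v) \in \mf{g}[[\epsilon]]$ unfolds to $\langle \lambda(a_i),\alpha\rangle \leq m$ for $\alpha \in \Delta_b$ and $\langle \lambda(a_i),\alpha\rangle \leq m+1$ for $\alpha \in \Delta_{b-n}$. The roots in $\Delta_b \cup \Delta_{b-n}$ can be uniformly enumerated as $\alpha_k := \ep_k - \ep_{k+b}$ for $k=1,\dots,n$, taking the second index as its representative in $\{1,\dots,n\}$; here $\alpha_k \in \Delta_b$ when $k+b\leq n$, and $\alpha_k\in\Delta_{b-n}$ when $k+b>n$. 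Using formula (4.2) one verifies $\lambda(a_i)_l = a_{l-k_0}-M$ (with $a = Mn+k_0$, $1\leq k_0\leq n$), and this identity is compatible with the periodicity $a_{j+n}=a_j+1$. Substituting, the two systems of inequalities both reduce to the single requirement $a_j - a_{j+b} \leq m$ for all $j\in\bb{Z}$; the $+1$ discrepancy between the two cases is exactly absorbed by the $+1$ in $a_{j+n} = a_j+1$ when the index wraps. This proves the implication (2) $\Rightarrow$ (1), because then $\epsilon^{\lambda(a_i)} \cdot G(\mca{O}) \in C_{\lambda(a_i)}$.

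\emph{Step 2 (the $\bb{G}_m$-action argument).} For (1) $\Rightarrow$ (2) I would invoke the $\bb{G}_m$-action on $X_v$ defined analogously to the one on $\hat{\mca{B}}_v$ at the end of \S4.1: it rescales $v$ by a fixed power of $t$ and therefore preserves $X_v$, and it preserves each $I$-orbit on $X$ with $\epsilon^{\lambda(a_i)}\cdot G(\mca{O})$ the unique fixed point inside $I\epsilon^{\lambda(a_i)}G(\mca{O})/G(\mca{O})$. Consequently $C_{\lambda(a_i)}$ is $\bb{G}_m$-stable. By Theorem~2.7 applied with $x=\tfrac{1}{n}\check{\rho}$, $y$ chosen so that $\hat{G}_y = G(\mca{O})$, and $c = \epsilon^{\lambda(a_i)}$, the intersection $C_{\lambda(a_i)}$ is an iterated affine space bundle over the associated Hessenberg base $S_{0+}$; in type $A$ the base $S_{0+}$ is either empty or a single point (as recalled after Theorem~2.7), so $C_{\lambda(a_i)}$ is either empty or an affine space. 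A nonempty $\bb{G}_m$-stable affine space must contain a $\bb{G}_m$-fixed point, and the only such point in the orbit is $\epsilon^{\lambda(a_i)}\cdot G(\mca{O})$; hence $\epsilon^{\lambda(a_i)}\cdot G(\mca{O})\in X_v$ and Step~1 finishes the argument.

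\emph{Main obstacle.} The combinatorial translation in Step~1 is purely index-juggling, but must be performed carefully: the asymmetry between the $\Delta_b$ and $\Delta_{b-n}$ constraints (and the extra $\epsilon$ factor in the definition of $v$ in front of the $\Delta_{b-n}$ summand) has to match exactly the shift built into the periodicity $a_{i+n}=a_i+1$. The genuine content is Step~2, namely showing that a nonempty $C_{\lambda(a_i)}$ already contains the torus fixed point $\epsilon^{\lambda(a_i)}\cdot G(\mca{O})$; this is where the affine paving of Goresky--Kottwitz--MacPherson combined with the $\bb{G}_m$-action is essential.
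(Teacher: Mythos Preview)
Your proof is correct and follows essentially the same strategy as the paper: reduce non-emptiness of $C_{\check\lambda(a_i)}$ to the membership $\epsilon^{\check\lambda(a_i)} G(\mathcal{O}) \in X_v$ via the $\mathbb{G}_m$-action, and then translate this into the inequalities $a_k - a_{k+b} \leq m$. The only cosmetic differences are that the paper computes $\langle \check\lambda(a_i), \alpha \rangle$ via Proposition~4.1 (writing each $\alpha \in \Delta_b \cup \Delta_{b-n}$ as a sum of simple roots) rather than via (4.2) directly, and for the fixed-point step it simply appeals to the discreteness of the $\mathbb{G}_m$-fixed locus in the projective variety $X_v$ rather than invoking Theorem~2.7.
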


\begin{proof}
We have a $\bb{G}_{m}$-action on $X_{v}$ with its fixed points contained in a discrete subset $\{ \epsilon^{\lam}G(\mca{O})/G(\mca{O})\mid \lam\in\Lam\}$ of $X$. Hence $C_{\lam(a_{i})}$ is not empty if and only if $\epsilon ^{\lam (a_{i})}G(\mca{O})/G(\mca{O})$ is contained in $X_{v}$. 

This is equivalent to 
\begin{equation}
\mr{Ad}(\epsilon ^{\lam (a_{i})})^{-1}v=\sum_{\alpha\in\Delta_{b-n}}\epsilon^{m+1-\langle \lam(a_{i}),\alpha\rangle}e_{\alpha}+\sum_{\alpha\in\Delta_{b}}\epsilon^{m-\langle\lam(a_{i}),\alpha\rangle}e_{\alpha}\in \mf{g}[[\epsilon]].
\end{equation}

Recall that $\alpha_{n}=-\alpha_{1}-\cdots-\alpha_{n-1}$ and $\alpha_{i+n}=\alpha_{i}$ by our convention. Then any $\alpha\in\Delta_{b}$ can be written as a sum of $b$ simple roots $\alpha=\alpha_{k}+\alpha_{k+1}+\cdots+\alpha_{k+b-1}$ with $k\in\bb{Z}$ such that $\{k,k+1,\ldots,k+b-1\}\cap n\bb{Z}=\emptyset$. Also, any $\alpha\in\Delta_{b-n}$ can be written as $\alpha=\alpha_{k}+\alpha_{k+1}+\cdots+\alpha_{k+b-1}$ with $k\in\bb{Z}$ such that $\{k,k+1,\ldots,k+b-1\}\cap n\bb{Z}\ne\emptyset$. 

By Proposition 4.1, we have
\begin{equation*}
\begin{cases}
\langle\lam(a_{i}),\alpha_{k}+\cdots+\alpha_{k+b-1}\rangle=a_{k-a}-a_{k+b-a} & \mbox{if }\{k,k+1,\ldots,k+b-1\}\cap n\bb{Z}=\emptyset,\\
\langle\lam(a_{i}),\alpha_{k}+\cdots+\alpha_{k+b-1}\rangle=a_{k-a}-a_{k+b-a}+1 & \mbox{if }\{k,k+1,\ldots,k+b-1\}\cap n\bb{Z}\ne\emptyset.
\end{cases}
\end{equation*}

Hence (4.4) is equivalent to
\begin{equation*}
\begin{cases}
a_{k-a}-a_{k+b-a}\leq m & \mbox{for any $k$ such that }\{k,k+1,\ldots,k+b-1\}\cap n\bb{Z}=\emptyset,\\
a_{k-a}-a_{k+b-a}+1\leq m+1 & \mbox{for any $k$ such that }\{k,k+1,\ldots,k+b-1\}\cap n\bb{Z}\ne\emptyset.
\end{cases}
\end{equation*}
This proves the proposition.
\end{proof}

\begin{prop}
We have a bijection $$\left\{(a_{i})\in P\mid C_{\lam(a_{i})}\ne\emptyset\right\}\longrightarrow\left\{\la\mid\la\mbox{ is a partition},\la\subseteq\delta-\delta'\right\},$$ given by $$(a_{i})\longmapsto\la(a_{i}):=\delta-(a_{b},a_{2b},\ldots,a_{nb}).$$
\end{prop}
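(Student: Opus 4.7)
My plan is to verify the map directly by unwinding the definitions on both sides, using Proposition 4.7 to identify the image, and using $\gcd(n,b)=1$ to construct the inverse.

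First I would check that $\lambda(a_i)$ is a partition contained in $\delta - \delta'$. Since $\delta_k - \delta_{k+1} = m$ for $1\leq k\leq n-1$, the weakly decreasing property follows from condition (2) of Proposition 4.7 applied at the index $kb$:
$$\lambda(a_i)_k - \lambda(a_i)_{k+1} = m - (a_{kb} - a_{(k+1)b}) \geq 0.$$
The last component vanishes because the conventions $a_n = 0$ and $a_{i+n} = a_i + 1$ force $a_0 = -1$ and $a_{nb} = b - 1 = \delta_n$. For the containment $\lambda(a_i)\subseteq \delta - \delta'$, I would write $kb = \delta'_k\cdot n + k''$ with $1\leq k''\leq n$, so that $a_{kb} = a_{k''} + \delta'_k \geq \delta'_k$ (using $a_{k''}\geq 0$ in both cases $1\leq k''\leq n-1$ and $k''=n$), which is equivalent to $\lambda(a_i)_k \leq \delta_k - \delta'_k$.

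For the inverse, given a partition $\lambda \subseteq \delta - \delta'$, I would set $a_{kb} := \delta_k - \lambda_k$ for $k = 1,\ldots,n$. Since $\gcd(n,b)=1$, the residues $kb \bmod n$ for $k=1,\ldots,n-1$ permute $\{1,\ldots,n-1\}$, and together with the shift rule $a_{i+n}=a_i+1$ this uniquely recovers the components $(a_1,\ldots,a_{n-1})$; consistency with $a_n = 0$ corresponds to $\lambda_n = 0$, which is forced by $\lambda_n \leq (\delta-\delta')_n = 0$. Nonnegativity $a_i \geq 0$ for $1\leq i\leq n-1$ is then exactly equivalent to $\lambda_k \leq \delta_k - \delta'_k$, by the same identity $a_{kb}=a_{k''}+\delta'_k$. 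Condition (2) of Proposition 4.7 is periodic modulo $n$, so it suffices to check it at $k=jb$ for $j=1,\ldots,n$: for $1\leq j\leq n-1$ it becomes $\lambda_j\geq \lambda_{j+1}$, and for $j=n$ it becomes $a_b\geq -1-m$, which is automatic. That the two maps are mutually inverse is immediate from the construction.

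The only real difficulty here is bookkeeping: reconciling the $\mathbb{Z}$-indexed conventions on $(a_i)$ --- in particular the shift $a_{i+n}=a_i+1$ and the boundary values $a_0=-1$, $a_n=0$, $a_{nb}=b-1$ --- with the partition $\lambda$ indexed by $k=1,\ldots,n$, and making sure that condition (2) at the boundary index $j=n$ does not impose any genuine constraint. Once this is pinned down, every step reduces to a one-line calculation.
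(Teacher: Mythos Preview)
Your proof is correct and follows essentially the same approach as the paper's: both identify the left-hand side via Proposition~4.7, translate ``$\lambda(a_i)$ is a partition'' into $a_{kb}-a_{(k+1)b}\leq m$, translate the containment $\lambda\subseteq\delta-\delta'$ into $a_{kb}\geq\delta'_k$ via the shift rule $a_{i+n}=a_i+1$, and use $\gcd(n,b)=1$ together with the boundary values $a_0=-1$, $a_n=0$, $a_{nb}=b-1$ to close the argument. The only cosmetic difference is that you explicitly build the inverse map, whereas the paper phrases the same computation as an equivalence of defining conditions.
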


\begin{proof}
By Proposition 4.7, the LHS of the assertion is equal to $$\{(a_{i})_{i\in\bb{Z}}\in P\mid a_{k}-a_{k+b}\leq m \mbox{ for any }k\in\bb{Z}\}.$$ 

On the other hand, for $\delta-(a_{b},a_{2b},\ldots,a_{nb})$ to be a partition, we need $$(n-1)m+b-1-a_{b}\geq (n-2)m+b-1-a_{2b}\geq\cdots\geq m+b-1-a_{(n-1)b}\geq b-1-a_{nb}\geq 0.$$ 

This is equivalent to 
\begin{equation*}
\begin{cases}
a_{lb}-a_{(l+1)b}\leq m & \mbox{for any }l\in\{1,2,\ldots,n-1\},\\
a_{nb}\leq b-1.
\end{cases}
\end{equation*}
Since $a_{nb}=a_{n}+b-1$, the second condition above is equivalent to $a_{n}\leq 0$.

Recall that $\delta'=(\delta'_{1},\ldots,\delta'_{n})$ is given by $\delta'_{l}=l'$, where $lb=l'n+l''$ with $1\leq l''\leq n$. Hence the condition $\delta-(a_{b},\ldots,a_{nb})\subseteq \delta-\delta'$ is equivalent to $a_{lb}\geq \delta'_{l}$, which is also equivalent to $a_{l''}\geq 0$ by $a_{lb}=a_{l'n+l''}=l'+a_{l''}$. 

Since $b$ is prime to $n$, the last condition is equivalent to $a_{i}\geq 0$ for any $i\in\{1,\ldots,n\}$.

Therefore, we have $a_{n}=0$, which implies $(a_{i})\in P$, and we have $a_{0}-a_{b}=-1-a_{b}\leq -1\leq m$. This implies that $a_{lb}-a_{(l+1)b}\leq m$ for any $l\in\{1,\ldots,n\}$. Again by $(n,b)=1$, this is equivalent to $a_{k}-a_{k+b}\leq m$ for any $k\in\bb{Z}$. Hence the assertion follows.
\end{proof}

Since $(n,b)=1$, the set $\{ \pm (\varepsilon_{a+ib}-\varepsilon_{a+i'b})\mid 0\leq i<i'\leq n-1\}$ is equal to the set of roots $\Delta$. We set $\la =\la(a_{i})$ where $\la(a_{i})$ is as in Proposition 4.8. We associate to an ordered pair of elements of $\la +(1^{n})/\la$ an element of $\Delta$ in the below. 

Let $x=(i,j)$ and $y=(i',j')$ be two distinct elements in $\la +(1^{n})/\la$ . Note that by the definition of $\la(a_{i})$, we have $j=m(n-i-1)+b-1-a_{(i+1)b}$ and similar formula for $j'$. Therefore, we have $$d_{m}(x)-d_{m}(y)=a_{(i'+1)b}-a_{(i+1)b}.$$ 

We set $$\alpha(x,y)=\epsilon_{a+(i+1)b}-\epsilon_{a+(i'+1)b}.$$ Then we have $\alpha(y,x)=-(\epsilon_{a+(i+1)b}-\epsilon_{a+(i'+1)b})$. 

This correspondence is used in the proof of Proposition 4.10.

\begin{lem}
Let $x,y$ be as above. Then we have $$\langle \lam(a_{i}),\alpha(x,y)\rangle =\begin{cases} d(x,y) & \mbox{if }\alpha(x,y)\in \Delta^{+}, \\ d(x,y)+1 & \mbox{if }\alpha(x,y)\in \Delta^{-}. \end{cases} $$
\end{lem}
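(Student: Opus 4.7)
The strategy is a direct bookkeeping computation: express both sides in terms of the coordinates $a_k$ and compare. I would first handle the case $i<i'$ (so that $\ell'\ge 0$), and then deduce the case $i>i'$ from the symmetry $\alpha(y,x)=-\alpha(x,y)$ together with the identity $d(y,x)=-d(x,y)-1$ already noted in the text.

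The first step is to write $d(x,y)$ explicitly in terms of the $a_k$'s. Since $x=(i,j)\in \la+(1^n)/\la$ with $\la=\delta-(a_b,a_{2b},\ldots,a_{nb})$, the column $j$ is $m(n-i-1)+b-1-a_{(i+1)b}$, so $d_m(x)=mi+j=m(n-1)+b-1-a_{(i+1)b}$. Therefore $d_m(y)-d_m(x)=a_{(i+1)b}-a_{(i'+1)b}$, and the definition $d(x,y)=d_m(y)-d_m(x)+\ell'$ yields
\[
d(x,y)=a_{(i+1)b}-a_{(i'+1)b}+\ell'.
\]

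The second step is to compute $\langle\lam(a_i),\alpha(x,y)\rangle$ using Proposition 4.1. Using the cyclic convention $\alpha_l=\alpha_{l \bmod n}$ together with $\alpha_1+\cdots+\alpha_n=0$, I would write
\[
\alpha(x,y)=\ep_{a+(i+1)b}-\ep_{a+(i'+1)b}=\sum_{s=a+(i+1)b}^{a+(i'+1)b-1}\alpha_s,
\]
where each full cycle of $n$ terms sums to $0$. Applying Proposition 4.1 to each summand and telescoping gives
\[
\langle\lam(a_i),\alpha(x,y)\rangle=a_{(i+1)b}-a_{(i'+1)b}+N,
\]
where $N$ is the number of multiples of $n$ in the interval $[a+(i+1)b,\ a+(i'+1)b-1]$.

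The final step is the key computation: $N=\ell'$ or $\ell'+1$ according to $\alpha(x,y)\in\Delta^+$ or $\Delta^-$. Writing $a+(i+1)b=cn+P$ with $P\in\{1,\ldots,n\}$, and using $(i'-i)b=\ell' n+\ell''$ with $1\le \ell''\le n-1$, one has $a+(i'+1)b-1=(c+\ell')n+(P+\ell''-1)$. A direct count of integers $k$ with $cn+P\le kn\le (c+\ell')n+(P+\ell''-1)$ gives $N=\ell'$ when $P+\ell''\le n$ and $N=\ell'+1$ when $P+\ell''>n$. On the other hand, letting $Q\in\{1,\ldots,n\}$ be the residue of $a+(i'+1)b$, one has $Q=P+\ell''$ in the first case (so $P<Q$, i.e.\ $\alpha(x,y)\in\Delta^+$) and $Q=P+\ell''-n$ in the second (so $Q<P$, i.e.\ $\alpha(x,y)\in\Delta^-$). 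Combining these identifications yields exactly the claim. The only delicate point, and the one I would be most careful about, is this final matching between the "wraparound" in the residue count for $N$ and the sign of $\alpha(x,y)$; once it is done, the lemma follows immediately, and the case $i>i'$ is handled by the symmetry noted above.
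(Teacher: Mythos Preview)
Your argument is correct and follows essentially the same route as the paper's own proof: in the case $i<i'$ one writes $\alpha(x,y)$ as the sum $\sum_{s=a+(i+1)b}^{a+(i'+1)b-1}\alpha_s$, applies Proposition~4.1 termwise and telescopes, so that the answer is $a_{(i+1)b}-a_{(i'+1)b}$ plus the number $N$ of multiples of $n$ in $[a+(i+1)b,\,a+(i'+1)b-1]$; the paper then records the single identity $N=\ell'$ or $\ell'+1$ according to the sign of $\alpha(x,y)$, which is exactly the ``wraparound'' computation you carry out in detail. The case $i>i'$ is handled in both by the symmetry $\alpha(y,x)=-\alpha(x,y)$ together with $d(y,x)=-d(x,y)-1$.
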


\begin{proof}
Let $l'$ be the integer satisfying  $(i'-i)b=l'n+l''$, where $1\leq l''\leq n-1$.
If $i<i'$, the assertion follows immediately from Proposition 4.1 by 
\begin{equation*}
\#\left(\{a+(i+1)b,a+(i+1)b+1,\ldots,a+(i'+1)b-1\}\cap n\bb{Z}\right)=
\begin{cases}l' & \mbox{if }\alpha(x,y)\in \Delta^{+},\\
l'+1 & \mbox{if }\alpha(x,y)\in \Delta^{-}.\\
\end{cases}
\end{equation*}
If $i>i'$, then we have 
\begin{eqnarray*}
\langle \lam(a_{i}),\alpha(x,y)\rangle &=&-\langle \lam(a_{i}),\alpha(y,x)\rangle \\&=& \begin{cases} -(d(y,x)+1) & \mbox{if }\alpha(y,x)\in \Delta^{-} \\ -d(y,x) & \mbox{if }\alpha(y,x)\in \Delta^{+} \end{cases}\\ &=&\begin{cases} d(x,y) & \mbox{if }\alpha(x,y)\in \Delta^{+} \\ d(x,y)+1 & \mbox{if }\alpha(x,y)\in \Delta^{-}. \end{cases}
\end{eqnarray*}
\end{proof}

Next proposition determines the dimensions of affine cells in terms of combinatorial data. Let $(a_{i})$ be an element of $P$ satisfying the conditions of Proposition 4.8, and $\la=\la(a_{i})\subset\delta-\delta'$. We set $$d(\la)=\dim C_{\lam(a_{i})}.$$ 
\begin{prop}
We have $$d(\la)+m(\la)=m\binom {n}{2} +\frac{(n-1)(b-1)}{2}.$$
\end{prop}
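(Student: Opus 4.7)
The plan is to interpret both $d(\la)$ and $m(\la)$ as sums indexed by ordered pairs of distinct boxes in $\la+(1^n)/\la$, and then reduce the identity to a $\la$-independent modular identity.

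First I would apply Theorem 2.7 with $y=0$, $x=\check{\rho}/n$, $s=(mn+b)/n$ and $c=\epsilon^{\lam(a_i)}$ to write $d(\la)$ as the number of $(\alpha,k)\in\Delta\times\bb{Z}$ with $0\le\langle\check{\rho},\alpha\rangle+nk<mn+b$ and $k<\langle\lam(a_i),\alpha\rangle$. The bijection $\alpha\leftrightarrow(x,y)$ from the discussion preceding Lemma 4.9 identifies $\Delta$ with the set of ordered pairs of distinct boxes; under it $\langle\check{\rho},\alpha(x,y)\rangle$ equals $l(x,y)$ or $l(x,y)-n$ according as $\alpha(x,y)\in\Delta^{+}$ or $\Delta^{-}$, while Lemma 4.9 gives $\langle\lam(a_i),\alpha(x,y)\rangle\in\{d(x,y),d(x,y)+1\}$ in those two cases. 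A short case analysis on the sign of the root shows the pair $(x,y)$ contributes $0$ unless $x>_d y$, and in that case contributes exactly $\min(m+1,d(x,y))$ when $l(x,y)<b$ and $\min(m,d(x,y))$ when $l(x,y)\ge b$, irrespective of the sign of $\alpha(x,y)$.

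Next, the elementary identity $\min(M,d)+\max(0,M-d)=M$ for $d\ge 0$, applied with $M=m+1$ or $m$, matches the formula for $m(x,y)$ in the definition of dinv and yields
\[
d(\la)+\sum_{x>_d y}m(x,y)=(m+1)N_1+mN_2=m\binom{n}{2}+N_1,
\]
where $N_1,N_2$ count pairs $x>_d y$ with $l(x,y)<b$, respectively $l(x,y)\ge b$. On the other hand, the maximum defining $m(\la)$ is attained on a negative semistandard tableau with all entries equal (remark after the definition of $m(\la)$); applying rule (1) uniformly to such a tableau gives $m(\la)=\sum_{x>_d y}m(x,y)-|A|+|B|$. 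Subtracting, $d(\la)+m(\la)=m\binom{n}{2}+(N_1-|A|+|B|)$.

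It remains to show $N_1-|A|+|B|=(n-1)(b-1)/2$ independent of $\la$. For each unordered pair of rows $\{i_1<i_2\}$ with $k=i_2-i_1$, set $r:=kb\bmod n\in\{1,\ldots,n-1\}$, so that the two directions have $l$-values $r$ and $n-r$. A four-way case analysis on whether $r<b$ and whether $n-r<b$ shows the pair's contribution to $N_1-|A|+|B|$ is $+1$ when $r\in\{1,\ldots,b-1\}$ and $0$ otherwise, regardless of which row is $>_d$-larger (the $|A|$ correction cancels the $N_1^{>}$ contribution exactly in the ``mixed with $l(y,x)\ge b$'' region). Since there are $n-k$ pairs of rows with fixed difference $k$, and $k\mapsto k_r:=rc\bmod n$ (with $c=b^{-1}\bmod n$) is a bijection from $\{1,\ldots,b-1\}$ to the admissible $k$'s, one gets
\[
N_1-|A|+|B|=\sum_{r=1}^{b-1}(n-k_r)=(b-1)n-\sum_{r=1}^{b-1}k_r.
\]
The congruence $k_r+k_{b-r}\equiv bc\equiv 1\pmod{n}$ forces $k_r+k_{b-r}=n+1$ (the only integer in $\{2,\ldots,2n-2\}$ of that residue), so pairing $r\leftrightarrow b-r$ gives $\sum_{r=1}^{b-1}k_r=(b-1)(n+1)/2$ and hence the desired value $(n-1)(b-1)/2$.

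The main obstacle is the first step, where one has to reconcile the positive/negative root split in Lemma 4.9 with the $l(x,y)<b$ versus $l(x,y)\ge b$ split, and verify that the two sign-cases collapse to a single uniform formula; the third step is then a clean combinatorial reduction to modular reciprocity.
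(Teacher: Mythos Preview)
Your proposal is correct and follows essentially the same route as the paper's proof: both compute the pairwise contribution to $d(\la)$ via Theorem~2.7 and Lemma~4.9, use the identity $\min(M,d)+\max(0,M-d)=M$ to reduce to showing $\#C-\#A+\#B=(n-1)(b-1)/2$ (your $N_1-|A|+|B|$), and finish with the row-pairing $k\leftrightarrow n+1-k$ (your $r\leftrightarrow b-r$). The only cosmetic difference is that the paper packages your four-case analysis into an auxiliary set $D$ and the set identity $\#C-\#D=\#A-\#B$, while you verify directly that each unordered row pair contributes $1$ iff $r<b$.
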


\begin{proof}
Let $x=(i,j)$, $y=(i',j')$ be two elements of $\la+(1^{n})/\la$ with $x>_{d}y$.  

By Theorem 2.7, we have $$d(\la)=\#\{(\alpha,k)\in\Delta\times\bb{Z}\mid 0<\langle\check{\rho},\alpha\rangle+kn<mn+b,\mbox{ and }\langle-\lam(a_{i}),\alpha\rangle+k<0\}.$$ Using this formula, we can determine the contribution of $\alpha(x,y)$ to $d(\la)$. 

We first assume that $\alpha(x,y)\in\Delta^{+}$. Then we have $\langle\check{\rho},\alpha(x,y)\rangle=l(x,y)$ and $\langle\lam(a_{i}),\alpha(x,y)\rangle=d(x,y)$. 

For $k\in\bb{Z}$ to satisfy $0<\langle\check{\rho},\alpha(x,y)\rangle+kn<mn+b$, we need 
\begin{equation*}
\begin{cases}
0\leq k\leq m \mbox{ if } l(x,y)<b, \\
0\leq k\leq m-1 \mbox{ if } l(x,y)\geq b.
\end{cases}
\end{equation*}

Hence the number of $k$'s which satisfies the above two conditions for $\alpha(x,y)$ is 
\begin{equation*}
\begin{cases}
\mr{min}(m+1,d(x,y)) & \mbox{ if }l(x,y)<b,\\
\mr{min}(m,d(x,y)) & \mbox{ if }l(x,y)\geq b.
\end{cases}
\end{equation*}
For $-\alpha(x,y)$, there is no $k$'s satisfying the above two conditions. 

The case of $\alpha(x,y)\in\Delta^{-}$ can be treated similarly. In this case, we have $\langle\check{\rho},\alpha(x,y)\rangle=l(x,y)-n$ and $\langle\lam(a_{i}),\alpha(x,y)\rangle=d(x,y)+1$. 

For $k\in\bb{Z}$ to satisfy $0<\langle\check{\rho},\alpha(x,y)\rangle+kn<mn+b$, we need 
\begin{equation*}
\begin{cases}
1\leq k\leq m+1 \mbox{ if } l(x,y)-n<b-n, \\
1\leq k\leq m \mbox{ if } l(x,y)-n\geq b-n.
\end{cases}
\end{equation*}

Hence the number of $k$'s satisfying the above two conditions is the same as in the case of $\alpha(x,y)\in\Delta^{+}$.

Next we consider the contribution of $(x,y)$ to $m(\la)$. By the definition of $\mr{dinv}(T)$, for $T\in\mr{SSYT}(\la+(1^{n})/\la)$ to attain the maximum, it is necessary that $T(x)<T(y)$. Then the contribution of $(x,y)$ is $m(x,y)$ or $m(x,y)\pm 1$. Note that the contribution of $(x,y)$ to $d(\la)+m(x,y)$ is $m+1$ if $l(x,y)<b$ and $m$ if $l(x,y)\geq b$. 

We set $$C=\{(x,y)\in\left(\la+(1^{n})/\la\right)\times\left(\la+(1^{n})/\la\right)\mid x>_{d}y\mbox{ and }l(x,y)<b\}.$$ 

Then by the definition of $\mr{dinv}$, we have $$d(\la)+m(\la)=m\binom {n}{2}+\#C-\#A+\#B,$$ where $A$ and $B$ are borrowed from section 3.2.

We set 
\begin{eqnarray*}
D\hspace{-2.5em}& =\{(x,y)\in\left(\la+(1^{n})/\la\right)\times\left(\la+(1^{n})/\la\right)\mid x>_{d}y,i<i'\mbox{ and }l(x,y)<b\}\\ 
& \hspace{4em}\bigcup\{(x,y)\in\left(\la+(1^{n})/\la\right)\times\left(\la+(1^{n})/\la\right)\mid x>_{d}y,i>i'\mbox{ and }l(y,x)<b\}.
\end{eqnarray*}

By (3.1) and (3.2), we have $C\setminus (C\cap D)=A$ and $D\setminus (C\cap D)=B$. Hence we have $\#C-\#D=\#A-\#B$. Therefore, it suffices to prove that $\#D=\frac{(n-1)(b-1)}{2}$. 

We set $$E=\left\{k\in\{2,\ldots,n-1\}\mid kb\equiv 1,2,\ldots,b-1\\ \mod n\right\}.$$ We have $\#E=b-1$. Note that $k\in E$ if and only if $n+1-k\in E$. 

Then we have 
\begin{eqnarray*}
\#D&=&\#\{(i,i')\in \{0,1,\ldots,n-1\}\times\{0,1,\ldots,n-1\}\mid i'-i\in E\}\\
&=&\sum_{k\in E}(n-k)\\
&=&n(b-1)-\frac{1}{2}\sum_{k\in E}\{k+(n+1-k)\}\\
&=&n(b-1)-\frac{(n+1)(b-1)}{2}=\frac{(n-1)(b-1)}{2},
\end{eqnarray*}
as required.
\end{proof}

\subsubsection*{The parahoric cases}\mbox{}

Let $\mu =(\mu_{1},\mu_{2},\ldots ,\mu_{l})$ be a partition of $n$. Let $\Xi_{i}=\{\tilde{\mu}_{i-1}+1,\tilde{\mu}_{i-1}+2,\ldots,\tilde{\mu}_{i}\}\subset\{1,2,\ldots,n\}$. We define $r_{\mu}:\{1,2,\ldots ,n\}\rightarrow \{\bar{1},\bar{2},\ldots ,\bar{l}\}\subset \mca{A}_{-}$ by $r_{\mu}(\Xi_{i})=\{\bar{i}\}$.

For $l\in \bb{Z}$, we write $\underline{l}$ for the integer such that $l=l'n+\underline{l}$, with $1\leq \underline{l}\leq n$.

\begin{prop}
We keep the setting of Proposition 4.7. We have a bijection $$\left\{ w\in W/W_{\mu}\mid I\epsilon^{\lam(a_{i})}w\hat{P}_{\mu}/\hat{P}_{\mu}\cap \hat{\mca{P}}_{\mu,v}\ne \emptyset \right\}\longrightarrow \mr{SSYT}_{-}(\la +(1^{n})/\la, \mu),$$ given by $$w\longmapsto \{ (i,j)\mapsto r_{\mu}(w^{-1}(\underline{a+(i+1)b}))\}.$$
\end{prop}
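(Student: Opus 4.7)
The plan follows the strategy of Proposition 4.7 with extra bookkeeping to track the Levi direction. By the same $\bb{G}_{m}$-fixed-point argument, the intersection $I\epsilon^{\lam(a_{i})}w\hat{P}_{\mu}/\hat{P}_{\mu}\cap\hat{\mca{P}}_{\mu,v}$ is nonempty if and only if $\mr{Ad}(\epsilon^{\lam(a_{i})}w)^{-1}(v)\in\mr{Lie}(\hat{P}_{\mu})=\mf{p}_{\mu}+\epsilon\mf{g}[[\epsilon]]$. Expanding $v$ in root spaces and conjugating yields a sum of terms $\epsilon^{k(\alpha)}e_{w^{-1}\alpha}$ over $\alpha\in\Delta_{b}\cup\Delta_{b-n}$, where $k(\alpha)=m-\langle\lam(a_{i}),\alpha\rangle$ for $\alpha\in\Delta_{b}$ and $k(\alpha)=m+1-\langle\lam(a_{i}),\alpha\rangle$ for $\alpha\in\Delta_{b-n}$. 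The condition on each term is that either $k(\alpha)\geq 1$, or $k(\alpha)=0$ together with $w^{-1}\alpha$ a root of $\mf{p}_{\mu}$, which for $\alpha=\epsilon_{p}-\epsilon_{q}$ means $r_{\mu}(w^{-1}(p))\leq r_{\mu}(w^{-1}(q))$.

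Since $(n,b)=1$, the map $\sigma(i):=\underline{a+(i+1)b}$ is a bijection $\{0,\ldots,n-1\}\to\{1,\ldots,n\}$, and the roots in $\Delta_{b}\cup\Delta_{b-n}$ are exactly $\epsilon_{\sigma(i)}-\epsilon_{\sigma(i+1\bmod n)}$ as $i$ runs over $\{0,\ldots,n-1\}$. Applying Lemma 4.9 to the corresponding pair $(x,y)$ of cells in rows $i$ and $i+1\bmod n$ of $\la+(1^{n})/\la$, the exponent $k(\alpha)$ collapses uniformly to $m-d(x,y)$ in both cases. Thus nonemptiness becomes: for each such $(x,y)$, one has $d(x,y)\leq m$, and if $d(x,y)=m$ then also $T(x)\leq T(y)$, where $T(x):=r_{\mu}(w^{-1}(\sigma(i)))$. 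For non-wrap-around pairs $i<n-1$, a direct computation gives $d(x,y)=m+(j'-j)$, so $d(x,y)\leq m$ is the partition condition already supplied by Proposition 4.7, and $d(x,y)=m$ is equivalent to same-column $j'=j$. For the wrap-around pair $(n-1,0)$, using $a_{nb}=b-1$ which forces the cell in row $n-1$ to sit at column $0$, one computes $d(x,y)=-1-a_{b}\leq -1$, so $d(x,y)<m$ always and the condition is vacuous.

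Since the column values $j_{i}=\la_{i+1}$ of $\la+(1^{n})/\la$ are weakly decreasing in $i$, any two same-column cells have all intermediate rows in the same column as well; hence the adjacent-row same-column condition $T(x)\leq T(y)$ chains transitively into the full column weakly-increasing condition. Together with the vacuous row strictly-increasing condition (each row has exactly one cell), this is exactly the definition of $T\in\mr{SSYT}_{-}(\la+(1^{n})/\la,\mu)$. The map $w\mapsto T$ descends to $W/W_{\mu}$ because $r_{\mu}$ is constant on each block $\Xi_{k}$, and it induces a bijection between $W/W_{\mu}$ and weight-$\mu$ fillings of the shape by $\{\bar{1},\ldots,\bar{l}\}$; by the above, this restricts to the asserted bijection. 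I expect the main subtlety to lie in the wrap-around case, where one must ensure neither a spurious semistandard constraint nor a failure of $d(x,y)\leq m$ appears, both of which rely on the boundary condition $a_{n}=0$ built into the definition of $P$.
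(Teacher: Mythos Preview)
Your argument is correct and follows essentially the same route as the paper's proof: reduce nonemptiness to the torus-fixed point condition $\mr{Ad}(\epsilon^{\lam(a_{i})}w)^{-1}(v)\in\mr{Lie}(\hat{P}_{\mu})$, identify the roots whose $\epsilon$-exponent is zero as exactly those coming from adjacent same-column cells, and translate the Levi condition on $w^{-1}\alpha$ into the column weakly-increasing constraint for a negative semistandard tableau. The only cosmetic difference is that you compute the exponent via Lemma~4.9 as $k(\alpha)=m-d(x,y)$ (and explicitly dispose of the wrap-around pair $(n-1,0)$ and the chaining of adjacent-column inequalities), whereas the paper reads off $a_{(i+1)b}-a_{(i+2)b}=m\Leftrightarrow j=j'$ directly from Propositions~4.1 and~4.8; both computations are equivalent.
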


\begin{proof}
Let $w\in W$ be a representative of an element of the LHS of the assertion. Then we have $$w^{-1}\left(\mr{Ad}(\epsilon^{\lam(a_{i})})^{-1}v\right)\in\mr{Lie}\hat{P}_{\mu}.$$ 

Since we have $\mr{Ad}(\epsilon^{\lam(a_{i})})^{-1}v\in \mf{g}[[\epsilon]]$, this is equivalent to $\overline{w^{-1}\left(\mr{Ad}(\epsilon^{\lam(a_{i})})^{-1}v\right)}\in\mf{p}_{\mu}$. Here, for each $u\in\mf{g}[[\epsilon]]$, we denote by $\bar{u}$ the image under the evaluation map $\mf{g}[[\epsilon]]\rightarrow\mf{g}$, $\epsilon\mapsto 0$. 

By the proof of Proposition 4.7, for $e_{\alpha}$ ($\alpha=\varepsilon_{k}-\varepsilon_{k+b}\in\Delta_{b}\sqcup\Delta_{b-n}$) to appear in $\overline{\left(\mr{Ad}(\epsilon^{\lam(a_{i})})^{-1}v\right)}$ with nonzero coefficient, it is necessary and sufficient that $a_{k-a}-a_{k+b-a}=m$. Let $k=a+(i+1)b$, $i=0,1.\ldots,n-1$. Then the above condition equals to $a_{(i+1)b}-a_{(i+2)b}=m$, and any element of $\Delta_{b}\sqcup\Delta_{b-n}$ can be written as $\varepsilon_{k}-\varepsilon_{k+b}$ since $n$ and $b$ are coprime. Let $x=(i,j)$, $y=(i+1,j')\in\la+(1^{n})/\la$. Then the condition $a_{(i+1)b}-a_{(i+2)b}=m$ is equivalent to $j=j'$ by Proposition 4.8.

We have $e_{w^{-1}(\alpha)}\in\mf{p}_{\mu}$ if and only if $$r_{\mu}(w^{-1}(\underline{a+(i+1)b}))\leq r_{\mu}(w^{-1}(\underline{a+(i+2)b})).$$ Hence $\overline{w^{-1}\left(\mr{Ad}(\epsilon^{\lam(a_{i})})^{-1}v\right)}\in\mf{p}_{\mu}$ is equivalent to the condition that for any $i$ with $a_{(i+1)b}-a_{(i+2)b}=m$, we have $r_{\mu}(w^{-1}(\underline{a+(i+1)b}))\leq r_{\mu}(w^{-1}(\underline{a+(i+2)b}))$. This means that the tableau on $\la+(1^{n})/\la$ as in the statement of the proposition is semistandard. Here, we note that we use negative letters so that semistandard tableau can have equal entries on the same column. 

This proves that the map in the statement of the assertion is well-defined. Surjectivity follows by reversing the above construction, and injectivity follows from the observation that $\mf{S}_{\mu}$ acts transitively on each $\Xi_{i}$.
\end{proof}

\begin{prop}
Suppose that $w\in W/W_{\mu}$ and $T\in\mr{SSTY}_{-}(\la +(1^{n})/\la,\mu)$ correspond to each other under the bijection of Proposition 4.11. Then we have $$\dim (I\epsilon^{\lam(a_{i})}w\hat{P}_{\mu}/\hat{P}_{\mu}\cap\hat{\mca{P}}_{\mu,v})=d(\la)+\mr{dinv}''(T).$$
\end{prop}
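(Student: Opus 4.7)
The plan is to apply Theorem 2.7 directly to $S = I\epsilon^{\lam(a_{i})} w \hat{P}_{\mu}/\hat{P}_{\mu} \cap \hat{\mca{P}}_{\mu, v}$ with $x = \check{\rho}/n$, $y = \check{\rho}_{\mu}/n$, $s = (mn+b)/n$ and $c = \epsilon^{\lam(a_{i})} w$, writing
\[
\dim S = \#\left\{(\alpha, k) \in \Delta \times \bb{Z} \,\Big|\, 0 \leq \tfrac{\langle\check{\rho}, \alpha\rangle}{n} + k < \tfrac{mn+b}{n},\ \tfrac{\langle\check{\rho}_{\mu}, w^{-1}\alpha\rangle}{n} - \langle\lam(a_{i}), \alpha\rangle + k < 0\right\}.
\]
The aim is to identify this count with $d(\la) + \mr{dinv}''(T)$ by comparing it term-by-term with the baseline calculation of Proposition 4.10.

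First I would re-use the parametrization from Section 4.3: each $\alpha \in \Delta$ is $\alpha(x', y') = \ep_{\underline{a+(i+1)b}} - \ep_{\underline{a+(i'+1)b}}$ for a unique ordered pair $(x', y')$ of distinct cells in $\la + (1^{n})/\la$. By Lemma 4.9, $\langle\lam(a_{i}), \alpha(x', y')\rangle$ equals $d(x', y')$ or $d(x', y') + 1$ according to whether $\alpha(x', y')$ lies in $\Delta^{+}$ or $\Delta^{-}$. A direct computation using $\check{\rho}_{\mu} = \sum_{k}\omega_{\tilde{\mu}_{k}}$, which is constant on each block $\Xi_{k}$ with values dropping by $1$ between consecutive blocks, combined with the bijection of Proposition 4.11, then yields
\[
\langle\check{\rho}_{\mu}, w^{-1}\alpha(x', y')\rangle = t - s,
\]
whenever $T(x') = \bar{s}$ and $T(y') = \bar{t}$.

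Next I would compare the count with Proposition 4.10, which handled the case $\check{\rho}_{\mu} = 0$ and contributed $d(\la)$. The new term $(t - s)/n$ in condition (B) lies in $(-1, 1)$ because $|t - s| \leq l - 1 < n$, so it shifts the integer upper bound on $k$ by at most $1$. A case analysis splitting on the sign of $\alpha(x', y')$ and on the sign of $t - s$ shows that the net change per unordered pair $\{x', y'\}$ (combining contributions from both $\pm\alpha(x', y')$) is $+1$ precisely when $T(x') > T(y')$ in $\mca{A}_{-}$ and the shifted upper bound still fits under the constraint imposed by (A); otherwise the change is $0$. A direct inspection identifies this compatibility condition with (3.3) exactly.

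Summing the correction over all ordered pairs $(x', y')$ with $x' >_{d} y'$ gives the number of pairs satisfying $T(x') > T(y')$ and (3.3), which is $\mr{dinv}''(T)$ by definition, yielding $\dim S = d(\la) + \mr{dinv}''(T)$. The main obstacle is the case analysis justifying that the fractional correction in (B) produces an integer change of at most $+1$ per pair (rather than of magnitude $|t - s|$) and that the threshold for this change to equal $+1$ coincides with (3.3); this requires separately treating $\alpha(x', y') \in \Delta^{+}$ and $\alpha(x', y') \in \Delta^{-}$ and checking carefully that the upper bound imposed by (A) on $-\alpha(x', y')$ rules out any extra contribution, but both cases ultimately reduce to the same combinatorial condition.
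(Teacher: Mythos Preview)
Your proposal is correct and follows essentially the same route as the paper's proof: apply Theorem 2.7 to obtain the count (4.5), compare with the baseline formula for $d(\la)$ from Proposition 4.10, and identify the extra contributions as exactly the pairs $(x,y)$ with $x>_{d}y$, $T(x)>T(y)$, and condition (3.3). The only cosmetic difference is that the paper takes $y=c\check{\rho}_{\mu}$ with an unspecified $0<c\ll 1$, so the perturbation is immediately seen to affect only the boundary value $k=\langle\lam(a_{i}),\alpha\rangle$; you instead fix $c=1/n$ and justify the same conclusion via the bound $|t-s|\leq l-1<n$, which is equally valid.
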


\begin{proof}
By Theorem 2.7 applied for $y=c\check{\rho_{\mu}}$, with $0<c\ll 1$, the LHS is 
\begin{equation}
\#\left\{(\alpha,k)\in\Delta\times\bb{Z}\mid 0\leq\langle\check{\rho},\alpha\rangle+kn<mn+b,\langle-\lam(a_{i})+cw(\check{\rho_{\mu}}),\alpha\rangle+k<0\right\}.
\end{equation}

Note that the number of $(\alpha,k)$'s satisfying the conditions in (4.5) with second condition replaced by $\langle-\lam(a_{i}),\alpha\rangle+k<0$ is $d(\la)$. Hence it suffices to count $(\alpha,k)$'s which also satisfies $k=\langle\lam(a_{i}),\alpha\rangle$.

Let $x=(i,j)$, $y=(i',j')\in\la+(1^{n})/\la$ with $x>_{d}y$. We consider the contribution of $\pm\alpha(x,y)$. If $\alpha(x,y)\in\Delta^{+}$, we have $\langle\lam(a_{i}),\alpha(x,y)\rangle=d(x,y)\geq 0$, hence $-\alpha(x,y)\in\Delta^{-}$ does not contribute since for $k$ to satisfy the first condition in (4.5), we need $k\geq 1$. The first condition in (4.5) for $\alpha(x,y)$ means that 
\begin{equation*}
\begin{cases}
0\leq d(x,y)\leq m & \mbox{ if }l(x,y)<b,\\   
0\leq d(x,y)\leq m-1 & \mbox{ if }l(x,y)\geq b.
\end{cases}
\end{equation*}

In the case of $\alpha(x,y)\in\Delta^{-}$, we proceed as above and obtain the same condition.

Second condition in (4.5) means that $\langle\check{\rho_{\mu}},w^{-1}(\alpha(x,y))\rangle<0$. This is equivalent to $$r_{\mu}(w^{-1}(\underline{a+(i+1)b}))>r_{\mu}(w^{-1}(\underline{a+(i'+1)b})).$$
This means that $T(x)>T(y)$. Hence $\alpha(x,y)$ contributes to the LHS if and only if $(x,y)$ contributes to $\mr{dinv}''(T)$. This proves the proposition. 
\end{proof}

\subsubsection*{Proof of the main theorem}\mbox{}

Recall that $C_{\lam(a_{i})}$'s are affine spaces and form an affine paving of $X_{v}$ by Theorem 2.7. Let $\pi :\hat{\mca{B}}_{v} \rightarrow X_{v}$ and $\pi_{\mu}:\hat{\mca{P}}_{\mu,v} \rightarrow X_{v}$ be natural projections. 

\begin{thm}
Let notation be as above. Then we have $$\mca{F}(H^{\mr{BM}}_{\ast}(\pi^{-1}(C_{\lam(a_{i})})),z;q)=q^{m\binom{n}{2}+\frac{(n-1)(b-1)}{2}}\omega D^{\la}(z;q^{-1}).$$
\end{thm}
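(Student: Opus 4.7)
The plan is to reduce the computation of the Frobenius series to a computation of graded dimensions of $\mf{S}_{\mu}$-invariants, one partition $\mu$ of $n$ at a time, and then identify those dimensions with the combinatorics already developed in Section 3. By the graded version of Lemma 3.9, the Frobenius series $\mca{F}(H^{\mr{BM}}_{\ast}(\pi^{-1}(C_{\lam(a_{i})})),z;q)$ is determined by the collection of polynomials $\sum_{i}q^{i}\dim H^{\mr{BM}}_{2i}(\pi^{-1}(C_{\lam(a_{i})}))^{\mf{S}_{\mu}}$, since the $h_{\mu}$ form a basis of the degree-$n$ part of $\mr{Sym}$ and they pair with $\mca{F}$ to give exactly these numbers.

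The key geometric input is the consequence of Proposition 2.6 stated after its proof: applied to the intermediate projection $\pi^{\mu}:\hat{\mca{B}}_{v}\rightarrow\hat{\mca{P}}_{\mu,v}$ and to the locally closed subvariety $Y=\pi_{\mu}^{-1}(C_{\lam(a_{i})})\subset\hat{\mca{P}}_{\mu,v}$, and using $\pi=\pi_{\mu}\circ\pi^{\mu}$, this yields a grading-preserving isomorphism
$$H^{\mr{BM}}_{\ast}(\pi^{-1}(C_{\lam(a_{i})}))^{\mf{S}_{\mu}}\simeq H^{\mr{BM}}_{\ast}(\pi_{\mu}^{-1}(C_{\lam(a_{i})})).$$
On the right, the cells $I\epsilon^{\lam(a_{i})}w\hat{P}_{\mu}/\hat{P}_{\mu}\cap\hat{\mca{P}}_{\mu,v}$, as $w$ varies in $W/W_{\mu}$, form an affine paving by Theorem 2.7; by Proposition 4.11 the nonempty cells are in bijection with $\mr{SSYT}_{-}(\la+(1^{n})/\la,\mu)$, and by Proposition 4.12 the cell indexed by $T$ has complex dimension $d(\la)+\mr{dinv}''(T)$. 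Since an affine paving puts $H^{\mr{BM}}_{\ast}$ in even degrees with one generator per cell of appropriate dimension, one gets
$$\sum_{i}q^{i}\dim H^{\mr{BM}}_{2i}(\pi_{\mu}^{-1}(C_{\lam(a_{i})}))=q^{d(\la)}\sum_{T\in\mr{SSYT}_{-}(\la+(1^{n})/\la,\mu)}q^{\mr{dinv}''(T)}.$$

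To match this with $q^{m\binom{n}{2}+(n-1)(b-1)/2}\omega D^{\la}(z;q^{-1})$, I would apply Corollary 3.6, which rewrites $\mr{dinv}''(T)=m(\la)-\mr{dinv}(T)$, and then Proposition 4.10, which gives $d(\la)+m(\la)=m\binom{n}{2}+(n-1)(b-1)/2$. This converts the above into
$$q^{m\binom{n}{2}+\frac{(n-1)(b-1)}{2}}\sum_{T\in\mr{SSYT}_{-}(\la+(1^{n})/\la,\mu)}q^{-\mr{dinv}(T)}.$$
Lemma 3.10 identifies the sum on the right with the coefficient of the monomial $w^{\mu}$ in $\omega D^{\la}(w;q^{-1})$, which (by symmetry of $\omega D^{\la}$) is exactly $\langle\omega D^{\la}(z;q^{-1}),h_{\mu}\rangle$. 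Therefore the pairings with $h_{\mu}$ on both sides of the claimed identity agree for every $\mu\vdash n$, and since $\{h_{\mu}\}$ is a basis of $\mr{Sym}_{n}$ the theorem follows.

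The main technical obstacle is not really in this proof itself but already absorbed in the preceding propositions: namely, verifying that the parahoric paving is indexed by negative semistandard tableaux (Proposition 4.11) with dimensions given by $\mr{dinv}''$ (Proposition 4.12), and the compatibility between the $\mf{S}_{n}$-action constructed in Section 2.2 and the parahoric projection $\pi^{\mu}$ through Proposition 2.6. Once those are granted, the argument is a bookkeeping verification that carefully tracks the grading shift $d(\la)+m(\la)$ and the $q\mapsto q^{-1}$ substitution forced by passing from $\mr{dinv}''$ to $\mr{dinv}$.
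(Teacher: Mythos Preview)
Your proposal is correct and follows essentially the same route as the paper's own proof: pair both sides with $h_{\mu}$, use Proposition~2.6 to replace $\mf{S}_{\mu}$-invariants by the parahoric homology, compute the latter via the affine paving of Propositions~4.11 and~4.12, and then apply Corollary~3.6, Proposition~4.10, and the negative-tableau expansion of $\omega D^{\la}$. The only slips are numbering: what you call ``Lemma~3.9'' (the pairing $\langle\mca{F}(V),h_{\mu}\rangle=\dim V^{\mf{S}_{\mu}}$) is Lemma~3.3 in the paper, and your ``Lemma~3.10'' is Lemma~3.9.
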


\begin{proof}
It suffices to check that the both sides paired with $h_{\mu}$ yield the same number. Indeed,
\begin{eqnarray*}
\langle\mca{F}(H^{\mr{BM}}_{\ast}(\pi^{-1}(C_{\lam(a_{i})})),z;q),h_{\mu}\rangle
&=&\sum_{i=0}^{\infty}q^{i}\dim H^{\mr{BM}}_{2i}(\pi_{\mu}^{-1}(C_{\lam (a_{i})}))\\
&=&\sum_{T\in\mr{SSYT}_{-}(\la+(1^{n})/\la,\mu)}q^{d(\la)+\mr{dinv}''(T)}\\
&=&\sum_{T\in\mr{SSYT}_{-}(\la+(1^{n})/\la,\mu)}q^{d(\la)+m(\la)-\mr{dinv}(T)}\\
&=&q^{m\binom{n}{2}+\frac{(n-1)(b-1)}{2}}\sum_{T\in\mr{SSYT}_{-}(\la+(1^{n})/\la,\mu)}q^{-\mr{dinv}(T)}\\
&=&q^{m\binom{n}{2}+\frac{(n-1)(b-1)}{2}}\langle\omega D^{\la}(z,q^{-1}),h_{\mu}\rangle.
\end{eqnarray*}
Here the first equality follows from Lemma 3.3 and Proposition 2.6. The second equality follows from Proposition 4.11 and Proposition 4.12. The third equality follows from Corollary 3.6. The fourth equality follows from Proposition 4.10. The fifth equality follows from Lemma 3.9 and $\langle m_{\la}, h_{\mu}\rangle=\delta_{\la\mu}$.
\end{proof}

\begin{cor}
For every $\la\subset(\delta-\delta')$, the symmetric function $D^{\la}(z;q)$ is Schur positive.
\end{cor}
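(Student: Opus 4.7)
The plan is to derive Schur positivity directly from Theorem 4.13 by combining one geometric input with a short formal manipulation. First I would observe that the left-hand side of Theorem 4.13, namely $\mca{F}(H^{\mr{BM}}_{\ast}(\pi^{-1}(C_{\lam(a_{i})})),z;q)$, is automatically Schur positive with coefficients in $\bb{Z}_{\geq 0}[q]$: by definition it equals $\sum_{i\geq 0}q^{i}\mca{F}(H^{\mr{BM}}_{2i}(\pi^{-1}(C_{\lam(a_{i})})),z)$, and the Frobenius characteristic of any finite-dimensional $\mf{S}_{n}$-representation is a non-negative integer combination of Schur functions.

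Next I would invert the substitution $q\mapsto q^{-1}$ appearing on the right-hand side of Theorem 4.13. Writing $N=m\binom{n}{2}+\frac{(n-1)(b-1)}{2}$, the theorem reads $q^{N}\omega D^{\la}(z;q^{-1})=\sum_{\mu}c_{\mu}(q)\,s_{\mu}(z)$ with each $c_{\mu}(q)\in\bb{Z}_{\geq 0}[q]$. Since $D^{\la}(z;q)$ is by construction a polynomial in $q$ of degree at most $m(\la)$, and Proposition 4.10 gives $m(\la)\leq N$, each $c_{\mu}(q)$ has degree at most $N$. Therefore $q^{N}c_{\mu}(q^{-1})$ is again a polynomial in $q$ with non-negative coefficients, and substituting $q\mapsto q^{-1}$ on both sides of the theorem gives
$$\omega D^{\la}(z;q)=\sum_{\mu}q^{N}c_{\mu}(q^{-1})\,s_{\mu}(z),$$
which is manifestly Schur positive.

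Finally, applying $\omega$ once more and using $\omega s_{\mu}=s_{\mu'}$ yields $D^{\la}(z;q)=\sum_{\mu}q^{N}c_{\mu}(q^{-1})\,s_{\mu'}(z)$, which is the claimed Schur positivity. All substantive content is packaged inside Theorem 4.13; the corollary reduces to formal reflection together with the comparison $m(\la)\leq N$. I do not anticipate any serious obstacle — the only step that demands a little care is the degree bookkeeping, to ensure that $q^{N}c_{\mu}(q^{-1})$ actually lies in $\bb{Z}_{\geq 0}[q]$ rather than in $\bb{Z}_{\geq 0}[q,q^{-1}]$.
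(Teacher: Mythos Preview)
Your proposal is correct and follows exactly the route the paper leaves implicit: the corollary is immediate from Theorem~4.13, since the Frobenius series of a graded $\mf{S}_{n}$-module is automatically Schur positive and the operations $q\mapsto q^{-1}$, multiplication by a power of $q$, and $\omega$ all preserve Schur positivity. The degree bookkeeping you flag is in fact unnecessary for the bare statement of Schur positivity---even if $q^{N}c_{\mu}(q^{-1})$ were only a Laurent polynomial, its coefficients would still be the non-negative integers $c_{\mu,k}$, just reindexed.
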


\begin{thm}
There exists a filtration on $H^{\mr{BM}}_{\ast}(\hat{\mca{B}}_{v})$ compatible with the homological grading and the $\mf{S}_{n}$-action such that the following equation holds: $$\mca{F}(\mr{gr}_{\ast}H^{\mr{BM}}_{\ast}(\hat{\mca{B}}_{v}),z;q,t)=q^{m\binom{n}{2}+\frac{(n-1)(b-1)}{2}}\omega D(z;q^{-1},t).$$
\end{thm}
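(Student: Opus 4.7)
My plan is to construct the filtration from the stratification of the affine Grassmannian given by Corollary 4.6, show it is $\mf{S}_n$-stable via the sheaf-theoretic construction of Section 2.2, and identify the associated graded using Theorem 4.13. First I would set $Y_{\leq c} := \pi^{-1}(X_{\leq c}) \subset \hat{\mca{B}}_v$, which is closed by Corollary 4.6, and define $F_c H^{\mr{BM}}_\ast(\hat{\mca{B}}_v)$ to be the image of the pushforward $H^{\mr{BM}}_\ast(Y_{\leq c}) \to H^{\mr{BM}}_\ast(\hat{\mca{B}}_v)$. The key combinatorial identity is $a(a_i) = |(\delta - \delta')/\la(a_i)|$: since $\la(a_i) = \delta - (a_b, \ldots, a_{nb})$ and $a_{lb} = \delta'_l + a_{l''}$, and since $l \mapsto l''$ is a bijection of $\{1, \ldots, n\}$ (using $(n,b) = 1$ and $a_n = 0$), one finds $\sum_{l} a_{lb} = |\delta'| + a$, hence $|(\delta - \delta')/\la(a_i)| = a$.

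Next, since both $C_{\lam(a_i)}$ and the fibers of $\pi$ (classical type $A$ Springer fibers) admit affine pavings, so do $\pi^{-1}(C_{\lam(a_i)})$ and the stratified subvarieties $Y_{\leq c}$. Therefore $H^{\mr{BM}}_{\mathrm{odd}}$ vanishes throughout, and the long exact sequences of the pairs $(Y_{\leq c}, Y_{\leq c-1})$ degenerate into short exact sequences
\begin{equation*}
0 \to H^{\mr{BM}}_\ast(Y_{\leq c-1}) \to H^{\mr{BM}}_\ast(Y_{\leq c}) \to \bigoplus_{a(a_i) = c} H^{\mr{BM}}_\ast(\pi^{-1}(C_{\lam(a_i)})) \to 0,
\end{equation*}
so that $\mr{gr}_c H^{\mr{BM}}_\ast(\hat{\mca{B}}_v) \cong \bigoplus_{a(a_i) = c} H^{\mr{BM}}_\ast(\pi^{-1}(C_{\lam(a_i)}))$ as vector spaces.

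The main obstacle I anticipate is verifying the $\mf{S}_n$-stability of $F_\ast$, since the $\mf{S}_n$-action is defined sheaf-theoretically rather than from a geometric action on $\hat{\mca{B}}_v$. The remedy is that, for $J$ the set of simple reflections of the finite Weyl group, one has $\hat{\mca{B}}^J = X$ and $\pi^J = \pi$, and the construction of Section 2.2 produces the $\mf{S}_n = W_J$-action at the level of the complex $\pi_! \bb{C}_{\hat{\mca{B}}_v(l)}$ on $X$. Since closed restriction to $X_{\leq c} \subset X$ commutes with this sheaf-level action, both $F_\ast$ and the short exact sequences above become $\mf{S}_n$-equivariant.

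Finally, combining the identification of $\mr{gr}_\ast$ with Theorem 4.13 and Definition 3.7,
\begin{equation*}
\mca{F}(\mr{gr}_\ast H^{\mr{BM}}_\ast(\hat{\mca{B}}_v), z; q, t) = \sum_{\la \subset \delta - \delta'} t^{|(\delta-\delta')/\la|} q^{m\binom{n}{2} + \frac{(n-1)(b-1)}{2}} \omega D^{\la}(z; q^{-1}) = q^{m\binom{n}{2} + \frac{(n-1)(b-1)}{2}} \omega D(z; q^{-1}, t),
\end{equation*}
yielding the claimed identity.
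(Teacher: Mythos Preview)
Your proposal is correct and follows essentially the same route as the paper's proof: define $Y_{\leq c}=\pi^{-1}(X_{\leq c})$ using Corollary 4.6, use the affine paving to identify $\mr{gr}_c$ with $\bigoplus_{a(a_i)=c}H^{\mr{BM}}_\ast(\pi^{-1}(C_{\lam(a_i)}))$, and conclude via Theorem 4.13 and Definition 3.7. You additionally spell out two points the paper leaves implicit—the combinatorial identity $a(a_i)=|(\delta-\delta')/\la(a_i)|$ needed to match the $t$-grading, and the $\mf{S}_n$-stability of the filtration via the sheaf-level action on $\pi_!\bb{C}$—both of which are genuinely required and correctly argued.
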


\begin{proof}
Let $Y_{\leq c}=\pi^{-1}(X_{\leq c})\cap\hat{\mca{B}}_{v}$. This is closed subset of $\hat{\mca{B}}_{v}$ by Corollary 4.6. By definition, $Y_{\leq c}$ has an affine paving which is a part of the affine paving of $\hat{\mca{B}}_{v}$. Hence we have an inclusion of Borel-Moore homology $H^{\mr{BM}}_{\ast}(Y_{\leq c})\subseteq H^{\mr{B}}_{\ast}(\hat{\mca{B}}_{v})$. 

We define an increasing filtration on $H^{\mr{BM}}_{\ast}(\hat{\mca{B}}_{v})$ by $F_{\leq c}H^{\mr{BM}}_{\ast}(\hat{\mca{B}}_{v}):=H^{\mr{BM}}_{\ast}(Y_{\leq c})$. Then the associated graded of degree $c$ is isomorphic as $\mf{S}_{n}$-module to the sum of $H^{\mr{BM}}_{\ast}(\pi^{-1}(C_{\lam(a_{i})}))$'s with $a(a_{i})=c$. Hence the theorem follows from Theorem 4.13 and Definition 3.7.
\end{proof}

\begin{rem}
In the above proof of Theorem 4.13, we used the fact that $D^{\la}(z;q)$ is a symmetric function. However, for a composition $\mu=(\mu_{i})$ of $n$, the dimensions of $\mf{S}_{\mu}$-invariants do not depend on the order of $\mu_{i}$'s. This fact and the same calculation as above implies that $D^{\la}(z;q)$ is a symmetric function.
\end{rem}

\nocite{*}
\bibliographystyle{plain}
\bibliography{Affine_Springer_Fibers_of_type_A_and_Combinatorics_of_Diagonal_Coinvariants}

\begin{thebibliography}{10}

\bibitem{MR737927}
Walter Borho and Robert MacPherson.
\newblock Partial resolutions of nilpotent varieties.
\newblock In {\em Analysis and topology on singular spaces, {II}, {III}
  ({L}uminy, 1981)}, volume 101 of {\em Ast\'erisque}, pages 23--74. Soc. Math.
  France, Paris, 1983.

\bibitem{MR1433132}
Neil Chriss and Victor Ginzburg.
\newblock {\em Representation theory and complex geometry}.
\newblock Birkh\"auser Boston Inc., Boston, MA, 1997.

\bibitem{MR629470}
Corrado De~Concini and Claudio Procesi.
\newblock Symmetric functions, conjugacy classes and the flag variety.
\newblock {\em Invent. Math.}, 64(2):203--219, 1981.

\bibitem{MR777705}
Ira~M. Gessel.
\newblock Multipartite {$P$}-partitions and inner products of skew {S}chur
  functions.
\newblock In {\em Combinatorics and algebra ({B}oulder, {C}olo., 1983)},
  volume~34 of {\em Contemp. Math.}, pages 289--317. Amer. Math. Soc.,
  Providence, RI, 1984.

\bibitem{MR2000467}
Iain Gordon.
\newblock On the quotient ring by diagonal invariants.
\newblock {\em Invent. Math.}, 153(3):503--518, 2003.

\bibitem{MR2209851}
Mark Goresky, Robert Kottwitz, and Robert MacPherson.
\newblock Purity of equivalued affine {S}pringer fibers.
\newblock {\em Represent. Theory}, 10:130--146 (electronic), 2006.

\bibitem{arXiv:1105.1151}
E.~Gorsky and Mikhail Mazin.
\newblock Compactified {J}acobians and $q,t$-catalan numbers.
\newblock arXiv:1105.1151.

\bibitem{MR2115257}
J.~Haglund, M.~Haiman, N.~Loehr, J.~B. Remmel, and A.~Ulyanov.
\newblock A combinatorial formula for the character of the diagonal
  coinvariants.
\newblock {\em Duke Math. J.}, 126(2):195--232, 2005.

\bibitem{MR2371044}
James Haglund.
\newblock {\em The {$q$},{$t$}-{C}atalan numbers and the space of diagonal
  harmonics}, volume~41 of {\em University Lecture Series}.
\newblock American Mathematical Society, Providence, RI, 2008.
\newblock With an appendix on the combinatorics of Macdonald polynomials.

\bibitem{MR1918676}
Mark Haiman.
\newblock Vanishing theorems and character formulas for the {H}ilbert scheme of
  points in the plane.
\newblock {\em Invent. Math.}, 149(2):371--407, 2002.

\bibitem{MR2051783}
Mark Haiman.
\newblock Combinatorics, symmetric functions, and {H}ilbert schemes.
\newblock In {\em Current developments in mathematics, 2002}, pages 39--111.
  Int. Press, Somerville, MA, 2003.

\bibitem{MR947819}
D.~Kazhdan and G.~Lusztig.
\newblock Fixed point varieties on affine flag manifolds.
\newblock {\em Israel J. Math.}, 62(2):129--168, 1988.

\bibitem{MR1923198}
Shrawan Kumar.
\newblock {\em Kac-{M}oody groups, their flag varieties and representation
  theory}, volume 204 of {\em Progress in Mathematics}.
\newblock Birkh\"auser Boston Inc., Boston, MA, 2002.

\bibitem{MR1434225}
Alain Lascoux, Bernard Leclerc, and Jean-Yves Thibon.
\newblock Ribbon tableaux, {H}all-{L}ittlewood functions, quantum affine
  algebras, and unipotent varieties.
\newblock {\em J. Math. Phys.}, 38(2):1041--1068, 1997.

\bibitem{MR1864481}
Bernard Leclerc and Jean-Yves Thibon.
\newblock Littlewood-{R}ichardson coefficients and {K}azhdan-{L}usztig
  polynomials.
\newblock In {\em Combinatorial methods in representation theory ({K}yoto,
  1998)}, volume~28 of {\em Adv. Stud. Pure Math.}, pages 155--220. Kinokuniya,
  Tokyo, 2000.

\bibitem{MR1390751}
George Lusztig.
\newblock Affine {W}eyl groups and conjugacy classes in {W}eyl groups.
\newblock {\em Transform. Groups}, 1(1-2):83--97, 1996.

\bibitem{MR1354144}
I.~G. Macdonald.
\newblock {\em Symmetric functions and {H}all polynomials}.
\newblock Oxford Mathematical Monographs. The Clarendon Press Oxford University
  Press, New York, second edition, 1995.
\newblock With contributions by A. Zelevinsky, Oxford Science Publications.

\bibitem{MR1253198}
Allen Moy and Gopal Prasad.
\newblock Unrefined minimal {$K$}-types for {$p$}-adic groups.
\newblock {\em Invent. Math.}, 116(1-3):393--408, 1994.

\bibitem{MR1824028}
Bruce~E. Sagan.
\newblock {\em The symmetric group}, volume 203 of {\em Graduate Texts in
  Mathematics}.
\newblock Springer-Verlag, New York, second edition, 2001.

\bibitem{MR1021493}
Toshiaki Shoji.
\newblock Geometry of orbits and {S}pringer correspondence.
\newblock In {\em Orbites unipotentes et repr{\'e}sentations, I}, volume 168 of
  {\em Ast\'erisque}, pages 61--140. 1988.

\bibitem{MR1486037}
E.~Sommers.
\newblock A family of affine {W}eyl group representations.
\newblock {\em Transform. Groups}, 2(4):375--390, 1997.

\bibitem{MR685425}
Toshiyuki Tanisaki.
\newblock Defining ideals of the closures of the conjugacy classes and
  representations of the {W}eyl groups.
\newblock {\em T\^ohoku Math. J. (2)}, 34(4):575--585, 1982.

\end{thebibliography}

\end{document}